\numberwithin{equation}{section}
\def\a{\alpha}
\def\b{\beta}
\def\ga{\gamma}
\def\de{\delta}
\def\De{\Delta}
\def\ep{\epsilon}
\def\la{\lambda}
\def\La{\Lambda}
\def\si{\sigma}
\def\Om{\Omega}
\renewcommand{\th}{\theta}
\newcommand{\Th}[0]{\Theta}
\def\nab{\nabla}
\def\varep{\varepsilon}
\def\CC{{\cal C}}
\def\DD{{\cal D}}
\def\II{{\cal I}}
\def\DD{{\cal D}}
\newcommand{\N}[0]{\mathbb{N}}
\newcommand{\R}[0]{\mathbb{R}}
\newcommand{\Z}[0]{\mathbb{Z}}
\newcommand{\C}[0]{\mathbb{C}}
\newcommand{\T}[0]{\mathbb{T}}
\newcommand{\supp}[0]{\mathrm{supp} \,}
\newcommand{\fr}[2]{\frac{#1}{#2}}
\newcommand{\ALI}[1]{\begin{align*} #1 \end{align*}}
\newcommand{\tx}[1]{\mbox{#1}}
\newcommand{\pr}[0]{\partial}
\renewcommand{\div}{\mathop{div} \,}
\newcommand{\co}[1]{\| #1 \|_{C^0}}
\newcommand{\Ddt}[0]{\fr{\bar D}{\partial t}}
\newcommand{\DDdt}[0]{\fr{\bar D^2}{\partial t^2}}
\newcommand{\ali}[1]{ \begin{align} #1 \end{align} }
\def\XXint#1#2#3{{\setbox0=\hbox{$#1{#2#3}{\int}$}
     \vcenter{\hbox{$#2#3$}}\kern-.5\wd0}}
\newtheorem{thm}{Theorem}[section]
\newtheorem{lem}{Lemma}[section]
\newtheorem{prop}{Proposition}[section]
\newtheorem{cor}{Corollary}[section]
\newtheorem{Claim}{Claim}[section]
\theoremstyle{definition}
\newtheorem{defn}{Definition}[section]
\theoremstyle{remark}
\newtheorem{rem}{Remark}
\title{ H\"{o}lder Continuous Solutions of Active Scalar Equations }
\author{ Philip Isett\thanks{Department of Mathematics, MIT, Cambridge, MA 02139, USA. (\href{mailto:isett@math.mit.edu}{isett@math.mit.edu}).}\, and Vlad Vicol\thanks{Department of Mathematics, Princeton University, Princeton, NJ 08544, USA. (\href{mailto:vvicol@math.princeton.edu}{vvicol@math.princeton.edu}).}}
\date{  }
\begin{document}
\maketitle

\begin{abstract}
 We consider active scalar equations $\partial_t \theta + \nabla \cdot (u \, \theta) = 0$, where $u = T[\theta]$ is a divergence-free velocity field, and $T$ is a Fourier multiplier operator with symbol $m$. We prove that when $m$ is not an odd function of frequency, there are nontrivial, compactly supported solutions weak solutions, with H\"older regularity $C^{1/9-}_{t,x}$. In fact, every integral conserving scalar field can be approximated in $\DD'$ by such solutions, and these weak solutions may be obtained from arbitrary initial data. We also show that when the multiplier $m$ is odd, weak limits of solutions are solutions, so that the $h$-principle for odd active scalars may not be expected. 
\end{abstract}

\setcounter{tocdepth}{2}
\tableofcontents

\section{Introduction}

The present paper is concerned with existence, nonuniqueness and results of $h$-principle type for H\"{o}lder continuous, weak solutions to inviscid {\em active scalar equations} with a divergence free drift velocity.  These equations have the form
\ali{
\label{eq:activeScalar}
\begin{split}
\pr_t \th + \pr_l(\th u^l) &= 0 \\
u^l &= T^l[\th]  \\
\pr_l u^l &= 0.
\end{split}
}
The operator $T^l[ \cdot ]$ defining the drift velocity $u^l$ in \eqref{eq:activeScalar} is represented in frequency space by a multiplier
\ali{
{\hat u}^l(\xi) = {\widehat T}^l[\th](\xi) &= m^l(\xi) {\hat \th}(\xi).
}
We assume that $m^l(\xi)$ is defined on the whole frequency space as a tempered distribution and is homogeneous of degree $0$ so that $T^l$ is an operator of order $0$.  The multiplier must satisfy $m^l(-\xi) = \overline{m^l(\xi)}$ so that the drift-velocity $u^l$ is real-valued whenever the scalar $\th$ is real-valued, and we assume that $m^l(\xi)$ is smooth away from the origin.  
The requirement that $u^l$ is divergence free corresponds to the requirement that $m^l(\xi)$ takes values perpendicular to the frequency vector $\xi$, i.e. $\xi \cdot m(\xi) = 0$ for $\xi \neq 0$. 

Active scalar equations arise from the full Navier-Stokes, Euler, or magneto-hydrodynamic equations in a number of physical regimes, such as stratification, rapid rotation, hydrostatic, and geostrophic balance. Physically motivated examples include:
\begin{enumerate}
 \item The {\em surface quasi-geostrophic} (SQG) equation~\cite{ConstantinMajdaTabak94,HeldPierrehumbertGarnerSwanson95}. Here 
 \[ m(\xi) = i \langle-\xi_2,\xi_1\rangle |\xi|^{-1}\] 
 is an {\em odd} symbol, bounded and smooth on the unit sphere. The SQG equation belongs to a general class of active scalar equations (with odd constitutive law $T$) satisfied by the vorticity of a generalized two-dimensional Euler equation on a Lie algebra (\'a la Arnold~\cite{arnoldEuler}) with a specific inner product~\cite{Resnick95} (see also \cite{TaoBlog} for a more recent account).
 
 \item The {\em incompressible porous media} (IPM) equation with velocity given by Darcy's law~\cite{Bear72,CordobaGancedoOrive07}. Here 
 \[ m(\xi) = \langle \xi_1 \xi_2, - \xi_1^2 \rangle |\xi|^{-2}\] 
 is an {\em even} symbol, bounded and smooth on the unit sphere. Note that the IPM equation has a three-dimensional analogue, with symbol $m(\xi) = \langle\xi_1 \xi_3, \xi_2 \xi_3, -\xi_1^2 - \xi_2^2\rangle |\xi|^{-2}$, which is again even. Our proof applies to this three-dimensional case as well, cf.~Remark~\ref{rem:3D} below.
 
 \item The {\em magneto-geostrophic} (MG) equation~\cite{MoffattLoper94,Moffatt08,FriedlanderVicol11a}. This is a three-dimensional active scalar equation, with symbol given by 
 \[ m(\xi) = \Big\langle \xi_2 \xi_3 |\xi|^2 + \xi_1 \xi_2^2 \xi_3, - \xi_1\xi_3 |\xi|^2+\xi_2^3 \xi_3, -\xi_2^2(\xi_1^2+\xi_2^2) \Big\rangle (\xi_3^2 |\xi|^2 + \xi_2^4)^{-1} \]
 for all $\xi \in \Z^3_*$ with $\xi_3\neq 0$, and by $m(\xi_1,\xi_2,0) = 0$. The symbol of the MG equation is {\em even} and zero-order homogenous, but as opposed to the previous examples, it is not bounded. This unboundedness may be seen by evaluating the symbol on a parabola $m(\zeta^2, \zeta,1)$, and passing $|\zeta|\to \infty$.  Nonetheless, the proof in our paper still applies to the MG equations as we only require smoothness in a neighborhood of finitely many points, cf. ~Remark~\ref{rem:non-smooth} below.
\end{enumerate}

Remarkably, from the mathematical point of view these scalar equations retain some of the same essential difficulties of the full fluid equations. In particular, the global well-posedness for the 2D SQG and IPM equations remains open, in analogy to the 3D Euler equations. More relevant for this paper, the regularity class in which the conservation of the energy $\|\theta\|_{L^2}^2$ may be established for weak solutions of \eqref{eq:activeScalar}, is  H\"older continuity with exponent $1/3$, as for 3D Euler. However, due to their more rigid geometry (e.g. no known analogue for Beltrami flows), their non-local nature, and the presence of infinitely many conservation laws (the $L^p$ norms of $\theta$, for any $p\geq 1$), the construction of weak solutions that fail to conserve energy appears to be more restrictive than for 3D Euler.

The pair $(\theta, u^l)$ is called a weak solution of \eqref{eq:activeScalar} if the equations \eqref{eq:activeScalar} are satisfied on $\R \times \T^2$ in the sense of distributions.  
When $(\th, u^l)$ are continuous, it is equivalent to require the balance laws
\[ \fr{d}{dt} \int_\Om \th(t,x) dx = \int_{\pr \Om} \th~u(t,x) \cdot n ~ d\si, \qquad 
\int_{\pr \Om} u(t,x) \cdot n ~d\si = 0 \]
to be satisfied as continuous functions of time for all subdomains $\Om$ with smooth boundary and inward unit normal $n$.  The definition of weak solution implies immediately that the integral $\int_{\T^2} \th(t,x) ~dx$ is a conserved quantity, but this definition does not immediately imply the other conservation laws that hold for classical solutions (see also~\cite{BTW12,BSW13} for comparisons with other notions of non-classical solutions for the Euler equations).

The study of weak solutions in fluid dynamics, including those which fail to conserve energy, is natural in the context of turbulent flows. The power spectrum predicted by Kolmogorov~\cite{K41} implies that solutions which arise in the inviscid limit of the 3D Navier-Stokes equations have H\"older $1/3$ regularity on average, and in particular are not classical.  Such flows are expected to exhibit anomalous dissipation of energy, rather than conserving energy.  The exponent $1/3$ is the same regularity threshold conjectured by Onsager~\cite{onsag} to be critical for energy conservation in the 3D Euler equations (see~\cite{BT13,DeLellisSzekelihidiBull,ShvydkoyLectures} for recent reviews).  For power spectra in  active scalar turbulence, we refer to Kraichnan~\cite{kraichnan2003small} and Constantin~\cite{Constantin98,Constantin02}.


Our first main result, Theorem~\ref{thm:mainThm}, shows that if the symbol of the multiplier $m^l(\xi)$ is not an odd function of $\xi$ for $\xi \neq 0$, there exist nontrivial, space-periodic solutions in two dimensions with compact support in time, having any H\"{o}lder regularity $\th \in C_{t,x}^\a$ with $\a < 1/9$.  
In contrast, the energy $\int |\th|^2(t,x) dx$ is a conserved quantity for solutions with H\"{o}lder regularity above $\a > 1/3$ and for classical solutions the quantity $\th^2$ is also advected by the drift velocity $u^l = T^l[\th]$, whereas both these properties clearly fail for our solutions.  This result gives the first proof of nonuniqueness of continuous weak solutions for any active scalar equation of this type.

\begin{thm}[Weak Solutions to Active Scalar equations]\label{thm:mainThm}  
Consider the active scalar equation \eqref{eq:activeScalar} with divergence free drift velocity, and assume that the multiplier $m^l(\xi)$ defining the operator $T^l$ is not an odd function of $\xi$ for $\xi \neq 0$.  Let $\a < 1/9$ and let $I$ be an open interval.  Then there exist nontrivial solutions to \eqref{eq:activeScalar} with H\"{o}lder regularity $\th, u^l \in C_{t,x}^\a(\R \times \T^2)$ which are identically $0$ outside of $I \times \T^2$.

Moreover, if $f : \R \times \T^2 \to \R$ is a smooth scalar function with compact support on $I \times \T^2$ which satisfies the conservation law $\fr{d}{dt} \int_{\T^2} f(t,x) ~dx = 0$, then there exists a sequence of weak solutions $\th_n : \R \times \T^2 \to \R$ to \eqref{eq:activeScalar} in the above regularity class such that $\th_n$ converges to $f$ in the $L^\infty$ weak-* topology, and each $\th_n$ has compact support in  $I \times \T^2$.
\end{thm}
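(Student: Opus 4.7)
The plan is to prove Theorem~\ref{thm:mainThm} by a Nash--Moser style convex integration scheme in the spirit of De Lellis--Sz\'ekelyhidi, adapted to the nonlocal scalar setting. I would work with the relaxed active scalar--Reynolds system
\[
\pr_t \th_q + \pr_l(\th_q u_q^l) = \pr_l R_q^l,\qquad u_q^l = T^l[\th_q],\qquad \pr_l u_q^l = 0,
\]
and build a sequence $(\th_q,u_q,R_q)$ indexed by a frequency parameter $\la_q \to \infty$ and an amplitude $\de_q^{1/2} \to 0$, with $\|\th_q\|_{C^0}\lesssim 1$, $\|R_q\|_{C^0}\lesssim \de_q$, and Schauder-type $C^N$ bounds scaling as $\la_q^N$. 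Then $\th:=\lim_q \th_q$ will lie in $C^{\alpha}_{t,x}$ for any $\alpha<1/9$ and solve \eqref{eq:activeScalar} weakly once $R_q\to 0$.

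The central step is a Main Iteration Lemma producing $(\th_{q+1},u_{q+1},R_{q+1})$ from $(\th_q,u_q,R_q)$ by adding a high-frequency correction of the form
\[
\th_{q+1}-\th_q = \sum_I a_I(t,x)\, e^{i\la_{q+1}\,k_I\cdot x},
\]
where each $a_I$ has size $\de_q^{1/2}$ and varies only at the coarse scale $\la_q$. The zero-frequency part of the self-interaction $(\th_{q+1}-\th_q)\,T^l[\th_{q+1}-\th_q]$ is
\[
\sum_I |a_I|^2\,\tfrac12\bigl(m^l(k_I)+m^l(-k_I)\bigr),
\]
and the aim is to match this to $-R_q^l$. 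This is exactly where the hypothesis enters: since $m^l$ is \emph{not} odd, the even part $\tfrac12(m^l(k)+m^l(-k))$ is not identically zero, and a geometric/algebraic lemma will let one realise any prescribed low-frequency vector field as such a non-negative combination over a sufficiently rich finite set of directions $\{k_I\}$, solving for smooth $|a_I|^2$'s via a locally invertible map.

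The remaining high-frequency residuals get absorbed into $R_{q+1}$ by applying a right-inverse of the divergence and gaining factors of $\la_{q+1}^{-1}$ from stationary phase. Choosing $\la_q = \la_0^{b^q}$ and $\de_q = \la_q^{-2\beta}$ with $b$ slightly larger than $1$ and $\beta$ slightly less than $1/9$, one checks that $\|\th_{q+1}-\th_q\|_{C^\alpha}\lesssim \de_q^{1/2}\la_{q+1}^\alpha$ is summable and that $\|R_{q+1}\|_{C^0}\lesssim \de_{q+1}$; the two constraints balance exactly at the exponent $1/9$, the same Onsager-like threshold achieved in the H\"older Euler constructions of Isett and of De Lellis--Sz\'ekelyhidi. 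The principal technical obstacle is controlling \emph{advective} derivatives along the coarse-scale flow of $u_q$ in the presence of the nonlocal operator $T^l$: commutator estimates between $T^l$ and the Littlewood--Paley / mollification projectors are required to separate coarse and fine scales cleanly, and to ensure the iterative bounds persist after composition with the Lagrangian flow.

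For the approximation assertion, given $f$ smooth and compactly supported in $I\times\T^2$ with $\tfrac{d}{dt}\int_{\T^2} f\,dx=0$, I would initialise with $\th_0=f$, $u_0^l=T^l[f]$, and $R_0^l$ a smooth antidivergence of $\pr_t f + \pr_l(f u_0^l)$. The conservation hypothesis yields the zero-mean condition needed for such an antidivergence to exist on the torus, and $R_0^l$ can be chosen to vanish for $t\notin I$. Each subsequent correction $\th_{q+1}-\th_q$ is a bounded function oscillating purely at frequency $\la_{q+1}\to\infty$, so $\th_q\rightharpoonup f$ in the weak-$\ast$ $L^\infty$ topology, hence in $\DD'$. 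A fixed time cutoff built into the amplitudes preserves compact support in $I\times\T^2$ at each stage. Applied to any nontrivial such $f$, this yields the nontrivial solutions of the first part of the theorem.
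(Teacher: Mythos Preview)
Your outline has the right high-level architecture, but it contains a genuine gap at the most delicate point of the argument: the control of the \emph{high-frequency interference terms} between distinct waves in your correction. You propose to use ``a sufficiently rich finite set of directions $\{k_I\}$'' and then claim that ``the remaining high-frequency residuals get absorbed into $R_{q+1}$ by applying a right-inverse of the divergence and gaining factors of $\la_{q+1}^{-1}$ from stationary phase.'' This does not work. Expanding $U_J^l\pr_l\Th_I$ for $J\neq\bar I$, the leading term is
\[
(i\la_{q+1})\, e^{i\la_{q+1}(k_I+k_J)\cdot x}\, a_I a_J\, m^l(k_J)\,(k_I)_l,
\]
which carries an explicit factor of $\la_{q+1}$; after inverting the divergence you recover only $\la_{q+1}^{-1}$, leaving a term of size $|a_I a_J|\sim \de_q$, i.e.\ no improvement at all over $R_q$. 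For Euler this is cured by the Beltrami Ansatz, which forces these cross terms to cancel. The paper shows (Section~\ref{sec:highFreqInterference}) that no analogous cancellation is available here: if $m^l(k_J)(k_I)_l + m^l(k_I)(k_J)_l = 0$ for linearly independent $k_I,k_J$, then the even part of $m$ must vanish at both frequencies, so any set of directions producing a nontrivial span in the stress equation will also produce uncontrollable interference.

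The paper's resolution is quite different from what you sketch. The key observation is that the \emph{self}-interference term ($J=I$) vanishes automatically from $m(k)\cdot k=0$, so one is forced into a \emph{serial} scheme: at each stage all waves oscillate in a single direction (parallel to one fixed $\xi^{(1)}$), eliminating only one component of the error. This necessitates the ``compound scalar-stress equation'' with alternating vectors $A^l$, $B^l$, and it is precisely this serial structure that produces the exponent $1/9$ rather than $1/5$. Your remark that $1/9$ is ``the same Onsager-like threshold achieved in the H\"older Euler constructions of Isett and of De Lellis--Sz\'ekelyhidi'' is incorrect---those constructions reach $1/5$, and the drop to $1/9$ here is the unavoidable price of the one-direction-at-a-time iteration. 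There are further subtleties you have not touched (low-frequency interference between waves with opposite-sign phase gradients, the Microlocal Lemma needed to handle $T^l$ on nonlinear-phase inputs, and the need to shift by $e(t)A^l$ to keep the amplitude coefficients nonnegative), but the interference issue above is the structural one that would cause your scheme, as written, to fail.
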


The above result builds upon the recent works by C\'{o}rdoba, Faraco, Gancedo~\cite{corFarGanPor}, Shvydkoy~\cite{shvConvInt}, and Sz\'{e}kelyhidi~\cite{laszloIPM} which establish the non-uniqueness of $L^\infty_{t,x}$ weak solutions to the IPM equations and $2D$ active scalar equations with even symbols $m$.  These previous works are based on a variant of the method of convex integration introduced for the Euler equations in \cite{deLSzeIncl} that provides an effective and elegant approach to producing bounded solutions, but which faces a major obstruction to producing continuous solutions.  For the Euler equations, this obstruction was overcome in \cite{deLSzeCts, deLSzeHoldCts, deLSzeCts2d} to produce continuous and $C^\a$ solutions on $\T^2$ and $\T^3$.  A crucial idea to overcome this obstruction is a key cancellation coming from the use of special families of stationary, plane wave solutions which allows for the control of interference terms between different waves in the construction.  For $3D$ Euler, these solutions are Beltrami flows (eigenfunctions of the curl operator), while for $2D$ Euler they are rotated gradients of Laplace eigenfunctions.

There is an obstruction to generalizing these ideas to obtain continuous solutions to active scalar equations, which is that analogous families of stationary, plane wave solutions do not exist in general for active scalar equations.  Furthermore, as we explain more precisely in Section~\ref{sec:stressTerm}, there is a sense in which no analogous cancellation is ever available under the assumptions of Theorem \ref{thm:mainThm}.  The same difficulty has also prohibited this approach from generalizing to the Euler equations in higher dimensions, even though similar results in principle could be expected to hold in any dimension.  (The conservation of energy for regularity above $1/3$ holds in any dimension, and the approach of \cite{deLSzeIncl} for constructing $L_{t,x}^\infty$ solutions applies in any dimension.)

The main idea that forms the starting point of our work is a new, more general, mechanism for obtaining the cancellation of interference terms in the construction, which arises without any special Ansatz in the construction.  Our observation is that the interference terms which arise when an individual wave interacts with itself must always cancel thanks to the divergence free structure of the equation, even though we lack a general method for controlling the interference between waves which oscillate in different directions.  This observation opens the door to a serial iteration scheme based on one-dimensional oscillations, as in the original scheme of Nash \cite{nashC1}.  The same observation applies to both the Euler equations and to general active scalar equations regardless of the dimension (c.f. Remark~\ref{rem:3D}).  Our proof therefore gives a new approach to constructing continuous and $C^\a$ weak solutions to these equations that is independent of the use of Beltrami flows or the analogue.    

Although the regularity obtained in Theorem~\ref{thm:mainThm} is strictly worse than the results which have been obtained for the Euler equations, the exponent $1/9$ is the best result we can hope to obtain from our method.  For the Euler equations, solutions in the class $C_{t,x}^{1/5-}$ were constructed in \cite{isett}, with another proof given by Buckmaster, De Lellis and Sz\'{e}kelyhidi \cite{deLSzeBuck}.  The construction in \cite{deLSzeBuck} has recently been refined in \cite{buckDeLSzeOnsCrit} to give continuous solutions in the class $L_t^1 C_x^{1/3 -}$, improving significantly a result of Buckmaster \cite{Buckmaster}.  A main obstruction to higher regularity faced by all of these works and also the present paper is the presence of anomalously sharp time cutoffs.  These cutoffs lead to bounds on advective derivatives which are inferior to the bounds that hold for solutions with higher regularity, cf.~\cite[Sec. 9]{isett2} and \cite[Sec. 1.1.3]{isettOhAng}.  
In our case, we face an additional loss of regularity which comes from our inability to eliminate more than one component of the error in a given stage of the iteration.  The same obstruction to regularity arises for the isometric embedding equation \cite{deLSzeC1iso}.  For active scalars, we must deal with both obstructions at the same time, and improving on either one seems to be a difficult problem.

Our approach to proving Theorem~\ref{thm:mainThm} also yields the following result, which shows that our construction can realize arbitrary smooth initial data.
\begin{thm}\label{thm:arbitraryNonuniqueness}  Let $I = (-T, T)$ be a finite open interval containing the origin, let $\a < 1/9$ and let $(\th_{(0)}, u_{(0)}^l)$ be a smooth solution to \eqref{eq:activeScalar} on $I \times \T^2$.  Then there exists a global, weak solution $(\th, u^l)$ to \eqref{eq:activeScalar} in the class $(\th, u^l) \in C_{t,x}^\a(\R \times \T^2)$ which coincides with $(\th_{(0)}, u_{(0)}^l)$ on the time interval 
\[ \th(t,x) = \th_{(0)}(t,x) \qquad (t,x) \in (-T/2, T/2) \times \T^2 \]
and which coincides with a constant
\[ \th(t,x) = \bar{\th} \]
for $(t,x) \notin (-4T/5, 4T/5) \times \T^2$.
\end{thm}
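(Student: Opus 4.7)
The plan is to run the convex integration scheme from the proof of Theorem~\ref{thm:mainThm} in a localized fashion, initialized at a smooth baseline that already agrees with $(\th_{(0)}, u_{(0)}^l)$ on $[-T/2, T/2] \times \T^2$ and with a trivial constant solution outside $(-4T/5, 4T/5) \times \T^2$. Concretely, set $\bar{\th} := (2\pi)^{-2} \int_{\T^2} \th_{(0)}(0,x) \, dx$; because $\th_{(0)}$ is a classical solution, this spatial mean is preserved in time, so $\th_{(0)} - \bar{\th}$ has zero spatial mean throughout $I$. Pick a smooth cutoff $\chi \colon \R \to [0,1]$ with $\chi \equiv 1$ on $[-T/2, T/2]$ and $\supp \chi \subset (-4T/5, 4T/5)$, and define on $I \times \T^2$
\[
f(t,x) := \bar{\th} + \chi(t)\bigl(\th_{(0)}(t,x) - \bar{\th}\bigr), \qquad v^l(t,x) := T^l[f] = \chi(t)\, u_{(0)}^l(t,x),
\]
extended by $(\bar{\th}, 0)$ outside $I \times \T^2$. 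Here I use that $T^l$ is linear and annihilates constants. The pair $(f, v)$ is smooth on $\R \times \T^2$, coincides with $(\th_{(0)}, u_{(0)}^l)$ on $[-T/2, T/2] \times \T^2$, equals $(\bar{\th}, 0)$ outside $(-4T/5, 4T/5) \times \T^2$, and has time-constant spatial mean $\bar{\th}$.

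A direct calculation using $\pr_t \th_{(0)} + \pr_l(u_{(0)}^l \th_{(0)}) = 0$ gives
\[
\pr_t f + \pr_l (v^l f) = \chi'(t)\bigl(\th_{(0)} - \bar{\th}\bigr) + \chi(t)\bigl(1-\chi(t)\bigr)\,\pr_t \th_{(0)},
\]
which is smooth, compactly supported in the two transition strips $(-4T/5, -T/2) \cup (T/2, 4T/5)$, and has zero spatial mean at every time (both $\th_{(0)} - \bar{\th}$ and $\pr_t \th_{(0)}$ do). Inverting the spatial divergence, we can rewrite this expression as $\pr_l R^l$ for a smooth stress $R^l$ vanishing on $(-T/2, T/2) \times \T^2$ and outside $(-4T/5, 4T/5) \times \T^2$. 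I would then feed $(f, v, R)$ as the starting datum of the iterative scheme of the proof of Theorem~\ref{thm:mainThm}. At each stage, Nash-type one-dimensional oscillatory corrections $(\Theta_q, U_q)$ are added with time cutoffs inherited from the support of the current stress, exactly as in the scheme that produces compactly supported solutions in Theorem~\ref{thm:mainThm}; these corrections therefore remain supported in the two transition strips throughout the iteration. The limit $\th := f + \sum_q \Theta_q$ and $u^l := T^l[\th]$ is then a weak solution of \eqref{eq:activeScalar} in $C^\a_{t,x}$ that coincides with $\th_{(0)}$ on $(-T/2, T/2) \times \T^2$ and with $\bar{\th}$ outside $(-4T/5, 4T/5) \times \T^2$.

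The main obstacle is that the baseline of the iteration is now a nonzero smooth flow $(f, v)$ rather than the zero state, so in each inductive step one must absorb the extra low-frequency interaction terms coming from the background: transport of the oscillatory correction $\Theta_q$ by $v$, and coupling between $T^l[\Theta_q]$ and $f$. Since $f$ and $v$ are uniformly smooth on the compact transition strips with bounds depending only on $\th_{(0)}$ and $u_{(0)}^l$, these extra terms should be subordinate to the high--high oscillatory self-interactions that drive the scheme of Theorem~\ref{thm:mainThm}. Verifying this quantitatively through the amplitude, frequency, and time cutoff parameters of that scheme, so that the base case of the inductive hypothesis is satisfied with constants depending only on $(\th_{(0)}, u_{(0)}^l)$, is the technical heart of the argument; modulo this, the proof runs in parallel with that of Theorem~\ref{thm:mainThm}.
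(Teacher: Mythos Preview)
Your approach is essentially the same as the paper's: glue $\th_{(0)}$ to the constant $\bar\th$ via a time cutoff, obtain a smooth scalar-stress field whose stress $R^l$ is supported in the transition strips, and then iterate the Main Lemma with energy functions $e_{(k)}(t)$ supported near those strips so that the corrections never touch $[-T/2,T/2]$ or the exterior of $(-4T/5,4T/5)$.  The paper chooses the cutoff to equal $1$ on $|t|\le 5T/8$ and $0$ for $|t|\ge 3T/4$, explicitly leaving a buffer on both sides of the transition region; you should do the same, since each application of the Main Lemma enlarges the time support by $\hat\tau_{(k)}$ and the total enlargement $\sum_k \hat\tau_{(k)}$ must fit inside the gap.

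Your ``main obstacle'' paragraph, however, identifies a difficulty that is not there.  The Main Lemma (Lemma~\ref{lem:mainLemma}) is already stated for an arbitrary compound scalar-stress field $(\th,u,c_A,R_J)$ with given frequency-energy levels; the transport of the correction by the background velocity and the high--low coupling with the background scalar are exactly the terms $R_T$ and $R_L$ in Section~\ref{sec:theErrorTerms}, and they are handled uniformly by the lemma.  Indeed, even the proof of Theorem~\ref{thm:mainThm} (via Theorem~\ref{thm:mainThm2}) starts from a \emph{nonzero} smooth $\th_{(0)}=f$, not from the zero state.  So there is nothing new to absorb here: once you observe that your $(f,v,0,R)$ has frequency-energy levels below some $(\Xi_{(0)}, e_{v,(0)}, e_{R,(0)}, e_{J,(0)})$ determined by the smooth data, the argument of Sections~\ref{sec:baseCase}--\ref{sec:verify:claim} runs verbatim.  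If anything the proof is simpler than for Theorem~\ref{thm:mainThm}, since no weak-$\ast$ approximation is required and hence the parameter $Y$ can be dropped.
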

To the best of our knowledge, Theorem~\ref{thm:arbitraryNonuniqueness} gives the first proof of global existence of weak solutions for \eqref{eq:activeScalar} with multipliers $m$ which are not odd, from arbitrary smooth initial data~\cite{corFarGanPor}. The global existence of weak solutions appears to be only known for odd symbols~\cite{Resnick95,ChaeConstantinCordobaGancedoWu12}, or for patch-type initial datum in the IPM equations~\cite{CordobaGancedo07}. Thus, in view of the known existence result for odd multipliers, we show that all active scalar equations with smooth constitutive law have global in time weak solutions (see also Remark~\ref{rem:non-smooth}).

Our method of construction demonstrates not only the existence of weak solutions, but also the abundance and flexibility of solutions in the class $C_{t,x}^{1/9 - \ep}$.  This point is emphasized by the following result of ``$h$-principle'' type, which follows from Theorem~\ref{thm:mainThm}, and completely characterizes the weak-* closure of these solutions in $L^\infty$.  The result illustrates that, within this regularity class, the conservation of the integral is the only source of rigidity for solutions to the equations that is stable in the weak-* topology\footnote{One must be cautious that Corollary~\ref{cor:weakLims} below does not assert that integral-conserving $L^\infty$ functions can be approximated by a weak-* convergent {\it sequence} of solutions as in the statement of Theorem~\ref{thm:mainThm}.  Such a statement would be false, since the functions $f$ obtained as weak-* limits of sequences will also inherit {\it time regularity} of the type $\pr_t f \in L_t^\infty W_x^{-1, p}$ from the equation~\eqref{eq:activeScalar}.}.  We refer to \cite{deLSzeHFluid, choff} for more on $h$-principles for fluid equations.  

\begin{cor}[$h$-principle for Active Scalar Equations]\label{cor:weakLims}  Consider the $2D$ active scalar equation \eqref{eq:activeScalar} as in the hypotheses of Theorem \ref{thm:mainThm}, with multiplier $m$ that is not odd.  Then for any $\a < 1/9$ and for any open interval $I$, the closure in the weak-* topology on $L^\infty(I \times \T^2)$ of the set of $C_{t,x}^{\a}$ solutions to \eqref{eq:activeScalar} with compact support in $I \times \T^2$ is equal to the space of real-valued $f \in L^\infty(I \times \T^2)$ which satisfy the conservation law $\int_{\T^2} f(t,x) dx = 0$ as a distribution in time.
\end{cor}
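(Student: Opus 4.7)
I would split the argument into two inclusions. The easy direction $\supseteq$: any $C^\a$ solution $\th$ compactly supported in $I\times\T^2$ satisfies $\int_{\T^2}\th(t,x)\,dx = 0$ for every $t$, because the weak form of the equation tested against $\psi(t) \in C_c^\infty(I)$ shows this integral is constant in $t$, while its vanishing on a neighborhood of the endpoints of $I$ (where $\th \equiv 0$) pins the constant at zero. This identity passes to any weak-* limit $f \in L^\infty(I\times\T^2)$ by testing against $\psi(t) \in L^1(I\times\T^2)$ and interchanging integrals, giving $\int_{\T^2}f(t,\cdot)\,dx = 0$ as a distribution in $t$.

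For the harder direction $\subseteq$, fix $f \in L^\infty(I\times\T^2)$ with $\int_{\T^2}f(t,\cdot)\,dx = 0$ distributionally. The plan is to exhibit, inside every basic weak-* neighborhood of $f$, a $C^\a$ solution compactly supported in $I\times\T^2$. First I would produce smooth approximations $g_k$ of $f$ that strictly meet the hypotheses of Theorem~\ref{thm:mainThm}. Pick temporal cutoffs $\chi_k \in C_c^\infty(I)$ with $\chi_k \to 1_I$ pointwise a.e.; since $\int_{\T^2}f(t,\cdot)\,dx = 0$ for a.e.\ $t$, the product $\chi_k f$ inherits this a.e.\ vanishing spatial integral. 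Convolving with a product mollifier $\eta_{\ep_k}(t)\rho_{\ep_k}(x)$ normalized so that $\int\eta_{\ep_k}\,dt = 1$ and $\int_{\T^2}\rho_{\ep_k}\,dx = 1$ preserves the vanishing spatial integral for every $t$:
\[
\int_{\T^2} g_k(t,x)\,dx = \int \eta_{\ep_k}(t-s)\Big(\int_{\T^2}\chi_k(s)f(s,y)\,dy\Big)\,ds = 0.
\]
Choosing $\ep_k$ small enough to keep $g_k$ compactly supported in $I\times\T^2$ gives smooth $g_k$ satisfying the hypotheses of Theorem~\ref{thm:mainThm} exactly, with $g_k \to f$ weak-* in $L^\infty$.

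Theorem~\ref{thm:mainThm} then supplies, for each $k$, a sequence $\th_{k,n}$ of $C^\a$ solutions compactly supported in $I\times\T^2$ with $\th_{k,n} \to g_k$ weak-* as $n \to \infty$. Given any basic weak-* neighborhood
\[
V = \big\{\, h \in L^\infty(I\times\T^2) :\ |\langle h - f,\,\phi_i\rangle| < \de,\ i=1,\dots,N \,\big\}
\]
with $\phi_1,\dots,\phi_N \in L^1(I\times\T^2)$, I would first choose $k$ large enough that $|\langle g_k - f,\phi_i\rangle| < \de/2$ for all $i$, and then $n$ large enough that $|\langle \th_{k,n} - g_k,\phi_i\rangle| < \de/2$ for all $i$; the resulting $\th_{k,n}$ lies in $V$. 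Since $V$ was arbitrary, $f$ belongs to the weak-* closure.

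The main conceptual subtlety — precisely the one flagged in the footnote accompanying the statement — is that this two-parameter approximation does \emph{not} deliver a single weak-* convergent sequence $\th_n \to f$, and indeed no such sequence can exist for generic $f$: any weak-* sequential limit of solutions would inherit the time regularity $\pr_t f \in L^\infty_t W_x^{-1,p}$ forced by the equation on the $\th_{k,n}$, which a generic integral-conserving $L^\infty$ function need not enjoy. The neighborhood-by-neighborhood argument above respects the topological (net-based) definition of the weak-* closure, which is what the corollary asserts, and bypasses this obstruction entirely.
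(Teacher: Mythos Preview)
Your argument is correct and follows essentially the same approach as the paper's proof: reduce to smooth, compactly supported, integral-conserving approximants via time cutoff and mollification, then invoke the main approximation theorem and close with a two-step triangle-inequality argument inside an arbitrary basic weak-* neighborhood. The paper frames the hard direction by contradiction (assume $f \notin \overline{S}$, extract a separating neighborhood, then contradict), whereas you argue directly, but the content is identical. Your explicit remark on why only net-closure (not sequential closure) is obtained matches the paper's footnote.

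One small cosmetic point: your inclusion labels are reversed. Writing the statement as ``closure $=$ \{integral-conserving $f$\}'', the easy direction you prove first (elements of the closure conserve the integral) is $\subseteq$, and the harder direction (every integral-conserving $f$ lies in the closure) is $\supseteq$.
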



While Theorems~\ref{thm:mainThm}-\ref{thm:arbitraryNonuniqueness}  and Corollary~\ref{cor:weakLims} illustrate an utter lack of rigidity for multipliers which are not odd, we find a much more rigid situation for weak solutions in the case of odd multipliers.  The following result implies that, when the multiplier is odd, every weak limit of solutions in $L_{t,x}^\infty$ must also be a solution to the same active scalar equation, in stark contrast to Theorem~\ref{thm:mainThm} and Corollary~\ref{cor:weakLims}.  This theorem generalizes the statement at the end of \cite{deLSzeHFluid} concerning weak rigidity for SQG, and makes precise the assumptions necessary for this rigidity.

\begin{thm}[Weak Rigidity for Active Scalars with Odd Multipliers]\label{thm:theOddCase}  Consider the active scalar equation~\eqref{eq:activeScalar} in any dimension, with divergence free drift velocity, and assume that the multiplier $m^l(\xi)$ defining the operator $T^l$ is an odd function of $\xi$ for $\xi \neq 0$.  Suppose that $f = \lim_n \th_n$ is a weak limit of solutions to \eqref{eq:activeScalar} in $L^p(I;L^2(\T^n))$, for some $p>2$.  Then $f(t,x)$ must be a weak solution to \eqref{eq:activeScalar}.   
\end{thm}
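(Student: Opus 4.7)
The plan is to exploit the oddness of $m^l$ to rewrite the nonlinear flux as a smoothing bilinear form in $\th$, then pass to the limit using time compactness extracted from the equation itself. The linear terms in the weak formulation of~\eqref{eq:activeScalar} pass trivially under $L^p(I;L^2)$ weak convergence, so the core task reduces to showing that $\int_I\!\int_{\T^n} T^l[\th_n]\,\th_n\,\pr_l\phi\,dx\,dt \to \int_I\!\int_{\T^n} T^l[f]\,f\,\pr_l\phi\,dx\,dt$ for every $\phi \in C_c^\infty(I \times \T^n)$.

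\textbf{Step 1 (symmetrization via skew-adjointness).} The reality condition $m^l(-\xi)=\overline{m^l(\xi)}$ combined with oddness $m^l(-\xi)=-m^l(\xi)$ forces $m^l$ to be purely imaginary, so on real-valued inputs $T^l$ is $L^2$-skew-adjoint: $\int T^l[\th]\psi\,dx = -\int \th\,T^l[\psi]\,dx$. Applying this with $\psi = \th_n\pr_l\phi$ and expanding $T^l[\th_n\pr_l\phi]$ through a commutator yields
\ali{
\label{eq:symmPlan}
2 \int T^l[\th_n]\,\th_n\,\pr_l\phi\,dx = -\int \th_n\, C_\phi[\th_n]\,dx,
}
where $C_\phi[\th] := T^l[\pr_l\phi \cdot \th] - \pr_l\phi \cdot T^l[\th]$. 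Since $m^l$ is zero-homogeneous and smooth away from the origin, classical pseudo-differential commutator calculus identifies $C_\phi$ as an operator of order $-1$, so that $C_\phi : H^s(\T^n) \to H^{s+1}(\T^n)$ is bounded, uniformly in $t$ on the support of $\phi$.

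\textbf{Step 2 (time compactness).} Uniform $L^p(I;L^2)$ bounds on $\th_n$ and $L^2$-boundedness of $T^l$ imply that $T^l[\th_n]\th_n$ is bounded in $L^{p/2}(I;L^1)$, whence $\pr_t\th_n$ is bounded in $L^{p/2}(I;W^{-1,1})$. Using the chain $L^2(\T^n) \hookrightarrow H^{-\ep}(\T^n) \hookrightarrow W^{-1,1}(\T^n)$ (with the first embedding compact) and the Aubin--Lions lemma, I extract a subsequence along which $\th_n \to f$ strongly in $L^p(I; H^{-\ep}(\T^n))$ for every $\ep > 0$.

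\textbf{Step 3 (passage to the limit).} Setting $\eta_n := \th_n - f$, which converges weakly to $0$ in $L^p(I;L^2)$ and strongly to $0$ in $L^p(I;H^{-\ep})$, I decompose
\ali{
\int \th_n\, C_\phi[\th_n] = \int f\, C_\phi[f] + \int f\, C_\phi[\eta_n] + \int \eta_n\, C_\phi[f] + \int \eta_n\, C_\phi[\eta_n].
}
The two cross terms vanish because $C_\phi$ is bounded on $L^p(I;L^2)$ (so $C_\phi[\eta_n]\rightharpoonup 0$ weakly), while $f$ and $C_\phi[f]$ lie in $L^{p'}(I;L^2)$ on the compact support of $\phi$ (using $p > 2 > p'$). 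The diagonal remainder is controlled through the $H^{-\ep}$--$H^\ep$ duality and the smoothing bound $\|C_\phi[\eta_n]\|_{H^\ep_x} \lesssim \|\eta_n\|_{L^2_x}$:
\ali{
\Big|\int_I \int_{\T^n} \eta_n\, C_\phi[\eta_n]\,dx\,dt\Big| \lesssim \|\eta_n\|_{L^p_t H^{-\ep}_x}\cdot\|\eta_n\|_{L^p_t L^2_x} \to 0.
}
Combined with \eqref{eq:symmPlan}, this shows convergence of the nonlinear flux, so $f$ solves~\eqref{eq:activeScalar} in the distributional sense.

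The main obstacle is Step 1: the derivation of \eqref{eq:symmPlan} relies crucially on skew-adjointness, so without oddness there is no mechanism for converting a factor of $\th_n$ into a derivative gain that could upgrade weak to strong convergence in the product. Theorem~\ref{thm:mainThm} confirms that this is not a technical artefact---weak rigidity is a genuine consequence of the antisymmetric structure of $T^l$.
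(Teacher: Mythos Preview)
Your proof is correct and follows the same overarching strategy as the paper: exploit oddness of $m^l$ to extract a commutator structure in the nonlinear term, then combine the resulting spatial smoothing with Aubin--Lions time compactness. The ingredients (skew-adjointness, order $-1$ commutator, $\pr_t\th_n$ controlled via the equation) are the same.

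Your Step~1 is, however, noticeably more direct than the paper's. The paper reaches the commutator identity by first inserting factors of $\De^{-1}\pr_j$ and performing a chain of integrations by parts (its equations (10.1)--(10.3)), using that $\pr_j T^l$ has \emph{even} symbol and is therefore self-adjoint; it then obtains the bound $|N_t[\th,\phi]| \lesssim \|\th\|_{L^2_x}\|\De^{-1}\nab\th\|_{L^2_x}\|\phi\|_{H^{3+\ep}_x}$. You instead observe that $T^l$ itself is skew-adjoint (odd, purely imaginary symbol) and get $2N_t = -\int \th\,[T^l,M_{\pr_l\phi}]\th$ in one line. Both routes encode the same one-derivative gain; yours is the cleaner bookkeeping. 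Correspondingly, the paper applies Aubin--Lions to $\De^{-1}\nab\th_n$ (strong $L^p_tL^2_x$), while you apply it to $\th_n$ itself (strong $L^p_tH^{-\ep}_x$); these are equivalent.

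Two minor points of polish: (i) in Step~2, the embedding $H^{-\ep}\hookrightarrow W^{-1,1}$ is awkward as stated; it is cleaner to replace $W^{-1,1}$ by $H^{-N}$ for some large $N$ (using $L^1(\T^n)\hookrightarrow H^{-s}$ for $s>n/2$), which is what the paper effectively does. (ii) You extract a subsequence in Step~2 but the conclusion must hold along the full sequence; the standard ``every subsequence has a further subsequence with the same limit'' argument closes this, and the paper makes this explicit.
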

We note that the $L^p$ time integrability condition on $\theta_n$ is by no means restrictive. Indeed, due to the incompressible transport nature of \eqref{eq:activeScalar}, weak solutions constructed via smooth approximations (e.g.~vanishing viscosity) are in fact bounded, or even weakly continuous in time.  

The proof of Theorem \ref{thm:theOddCase} is based on the approach of~\cite{Resnick95}, where global $L^\infty_t L^2_x$ weak solutions of the surface quasi-geostrophic equations are constructed. The main idea is that odd multipliers $m$ induce a certain commutator structure in the nonlinear term, which yields the necessary compactness. In fact, the oddness of $m$ implies that the equations are well-posed, even if the operator $T^l$ is not of degree $0$ (see~\cite{ChaeConstantinCordobaGancedoWu12}), and in such cases the oddness appears to be necessary~\cite{FriedlanderGancedoSunVicol12,FriedlanderVicol11c}.

In addition to the weak rigidity of Theorem~\ref{thm:theOddCase}, in the following theorem we show that every active scalar equation in $2D$ with odd symbol has a Hamiltonian that is conserved for solutions in the class $L_{t,x}^3$.
\begin{thm}[Conservation of the Hamiltonian for Active Scalars with Odd Multipliers]\label{thm:odd:hamiltonian}  Consider the active scalar equation \eqref{eq:activeScalar} in two dimensions with divergence free drift velocity and odd multiplier as in Theorem~\ref{thm:theOddCase}. Define the operator
\ali{
L = (-\Delta)^{-1} (\nabla \cdot T^\perp) = (-\Delta)^{-1/2} (R_2 T^1 - R_1 T^2)
\label{eq:L:def}
} 
where $R_i$ is the $i^{th}$ Riesz transform. The fact that $m$ is odd, implies that $L$ is self-adjoint. 
Define the Hamiltonian
\begin{align}
H(t) = \int_{\T^2} \theta(t,x) L\theta(t,x) dx.
\label{eq:Hamiltonian:def}
\end{align}
Then, if $\th$ is a solution to \eqref{eq:activeScalar} in the class $\th \in L_{t,x}^3$, the function $H(t)$ is constant in time.  
\end{thm}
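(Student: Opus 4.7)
The starting point would be to identify $L\theta$ as the stream function of the drift velocity $u=T[\theta]$. A direct Fourier computation---using the divergence-free constraint $\xi\cdot m(\xi)=0$ and the observation that oddness of $m$ combined with the reality condition $m(-\xi)=\overline{m(\xi)}$ forces $m(\xi)$ to be purely imaginary---would show both that the symbol of $L$ is real-valued (hence $L$ is self-adjoint on $L^2$) and that $T[\theta]=\nabla^\perp L\theta$. In particular
$$u\cdot \nabla L\theta = \nabla^\perp L\theta\cdot \nabla L\theta \equiv 0$$
pointwise, which gives, for smooth solutions,
$$\frac{d}{dt} H(t)=2\int_{\T^2}\partial_t\theta\, L\theta\,dx=2\int_{\T^2} \theta\, u\cdot\nabla L\theta\,dx=0.$$

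To upgrade this formal computation to the $L^3_{t,x}$ setting, I would follow the Constantin--E--Titi strategy. Let $\eta_\epsilon$ be a standard spatial mollifier, set $\theta_\epsilon=\theta\ast\eta_\epsilon$ and $u_\epsilon=T[\theta_\epsilon]=u\ast \eta_\epsilon$ (since $T$ is a Fourier multiplier it commutes with mollification). Then
$$\partial_t\theta_\epsilon+\nabla\cdot(u_\epsilon\theta_\epsilon)=\partial_l r_\epsilon^l,\qquad r_\epsilon^l:=u_\epsilon^l\theta_\epsilon-(u^l\theta)_\epsilon.$$
Defining $H_\epsilon(t):=\int_{\T^2}\theta_\epsilon L\theta_\epsilon\,dx$, the self-adjointness of $L$ combined with the pointwise cancellation $u_\epsilon\cdot\nabla L\theta_\epsilon=0$ collapses the time derivative to the commutator:
$$\frac{d}{dt} H_\epsilon(t)=-2\int_{\T^2} r_\epsilon^l\,\partial_l L\theta_\epsilon\,dx.$$

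The last step is to pass to the limit $\epsilon\to 0$. The classical Constantin--E--Titi commutator estimate yields $\|r_\epsilon(t)\|_{L^{3/2}_x}\to 0$ for a.e. $t$ whenever $\theta(t)\in L^3_x$, with uniform bound $\|r_\epsilon(t)\|_{L^{3/2}_x}\leq C\|\theta(t)\|_{L^3_x}^2$ coming from the $L^p$-boundedness of $T$. Since $L$ is a Fourier multiplier of order $-1$, $\partial_l L$ has order $0$ and is bounded on $L^3$, so $\|\partial_l L\theta_\epsilon(t)\|_{L^3_x}\leq C\|\theta(t)\|_{L^3_x}$ uniformly in $\epsilon$. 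Hölder in space and dominated convergence in time (with dominating function $C\|\theta(t)\|_{L^3_x}^3\in L^1_t$) then give
$$\int_I\int_{\T^2} r_\epsilon^l\,\partial_l L\theta_\epsilon\,dx\,dt\longrightarrow 0.$$
Testing against an arbitrary $\phi\in C_c^\infty(I)$ and noting that $H_\epsilon\to H$ in $L^1_{\mathrm{loc}}(I)$ (because $\theta_\epsilon\to\theta$ in $L^2_{t,x}$ and $L$ is bounded on $L^2$), one concludes $\int_I H(t)\phi'(t)\,dt=0$, so $H$ is constant on $I$ in the distributional sense, and hence a.e. in time.

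The main obstacle is verifying that the Constantin--E--Titi commutator estimate, traditionally stated for the symmetric product $u\otimes u$ in the Euler setting, carries over to the present nonsymmetric product $u\theta$ in which $u$ is obtained from $\theta$ via a zero-order Fourier multiplier. This should be essentially automatic, since the proof uses only continuity of translations in $L^3_x$ together with the $L^p$-boundedness of $T$; nothing about the symbol structure beyond boundedness enters. Oddness of $m$ is used in exactly one place---to make $L$ self-adjoint---which is precisely what allows the time derivative of $H_\epsilon$ to reduce to a single commutator term whose vanishing is then handled by the standard estimate.
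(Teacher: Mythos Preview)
Your proof is correct and shares the essential structural insight with the paper: the identification $u=T[\theta]=\nabla^\perp L\theta$ (coming from $\xi\cdot m(\xi)=0$ and the oddness/reality of $m$), which makes $u\cdot\nabla L\theta\equiv 0$ and thereby collapses $\tfrac{d}{dt}H$ to a term that vanishes in the limit.

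The difference lies in how the passage from smooth to $L^3_{t,x}$ solutions is handled. You split $(u\theta)_\epsilon = u_\epsilon\theta_\epsilon - r_\epsilon$, use the exact cancellation on the first piece, and invoke a CET-type commutator estimate to send $r_\epsilon\to 0$ in $L^{3/2}_x$. The paper instead leaves $(\theta u^l)_\epsilon$ intact and observes that the resulting expression is the diagonal of a trilinear form $Q_\epsilon[\theta_{(1)},\theta_{(2)},\theta_{(3)}]$ satisfying a bound uniform in $\epsilon$ on $L^3\times L^3\times L^3$ (using only $L^3$-boundedness of $T$ and of $\nabla L$). By density of smooth functions, it then suffices to verify the identity for smooth $\theta$, where one passes to the limit directly and uses the pointwise cancellation. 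The paper explicitly remarks that ``there is no need for a quadratic commutator estimate as in [CET]'' here---the situation is strictly simpler than Onsager-critical energy conservation because the cubic form is already bounded on $L^3$. Your approach works but brings in more machinery than necessary; in fact the convergence $r_\epsilon\to 0$ in $L^{3/2}$ that you use follows from elementary mollification properties alone (both $u_\epsilon\theta_\epsilon$ and $(u\theta)_\epsilon$ converge to $u\theta$ in $L^{3/2}$), not from the quantitative CET estimate.
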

We note that due to the transport structure of \eqref{eq:activeScalar}, solutions which are obtained by smooth approximations, such as viscosity approximations, Galerkin truncations, etc, will automatically lie in $L^\infty_{t,x}$, and thus also in $L^3_{t,x}$. 

Theorem~\ref{thm:theOddCase} precludes any results such as Theorems~\ref{thm:mainThm}-\ref{thm:arbitraryNonuniqueness} from holding in the case of the SQG equation, in which case $L = (-\Delta)^{-1/2}$ and we obtain the conservation of the $H^{-1/2}$ norm for solutions in $L_{t,x}^3$. Note however that in general the operator $L$ need not be coercive, as is the case when $m$ vanishes somewhere on the unit sphere. We refer to \cite{Resnick95,TaoBlog} for an exposition of how the quantity $H(t)$ serves as a Hamiltonian for the equation. 

We conclude our introduction by remarking on how our method extends to higher dimensions, and to the case of multipliers which are not smooth.

\begin{rem}[Higher Dimensions] \label{rem:3D}
Our proof generalizes to active scalar equations in arbitrary dimensions (c.f. Section~\ref{sec:explain:higherD} for the relevant modifications).  In this case, however, there are two further restrictions.  First of all, the regularity we obtain becomes worse as the dimension increases.  The same type of loss (for essentially the same reason, see Section~\ref{sec:compareEulerIso} below) is also seen in the case of the isometric embedding equations \cite{deLSzeC1iso}.  Second, we cannot obtain our result for all smooth multipliers whose symbols are not odd, and we require a nondegeneracy condition on the even part of the multiplier.  
\end{rem}
The precise result we obtain is the following:
\begin{thm}[Multi-dimensional Case]\label{thm:nDims}
Consider the active scalar equation \eqref{eq:activeScalar} with divergence free drift velocity on $\T^d$.  Assume also that the image of the even part of the multiplier contains $d$ vectors 
\ali{
 A_{(i)} &= m(\xi^{(i)}) + m(-\xi^{(i)}), \qquad i = 1, 2, \ldots, d  \label{eq:vectorsInImage}
}
such that the vectors $A_{(1)}, \ldots, A_{(d)}$ span $\R^d$.  Then Theorems~\ref{thm:mainThm}-\ref{thm:arbitraryNonuniqueness}  and Corollary~\ref{cor:weakLims} hold as stated, but with the condition $\a < \fr{1}{9}$ on the H\"older exponent being replaced by
\[ \a < \fr{1}{1 + 4 d}.\]
\end{thm}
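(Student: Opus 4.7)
The plan is to lift the iteration scheme of Theorem~\ref{thm:mainThm} to $d$ dimensions. The outer structure of the iteration is unchanged: given an approximate solution $(\theta_q, u_q^l)$ with residual stress vector $R_q^l$ satisfying
\[ \pr_t \theta_q + \pr_l(\theta_q u_q^l) = \pr_l R_q^l, \]
one builds a corrected pair $(\theta_{q+1}, u_{q+1}^l)$ whose new stress $R_{q+1}^l$ is smaller by a fixed geometric factor. Once this $C^\alpha$ construction is in place in dimension $d$, the second assertion of Theorem~\ref{thm:mainThm}, Theorem~\ref{thm:arbitraryNonuniqueness}, and Corollary~\ref{cor:weakLims} descend by exactly the same gluing and soft arguments used in the 2D case.

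What changes in dimension $d$ is the inner structure of each stage. Since the stress $R^l$ is a vector in $\R^d$, eliminating it through the self-interaction mechanism on which the proof of Theorem~\ref{thm:mainThm} rests requires splitting each stage into $d$ consecutive sub-stages, one for each basis vector $A_{(i)}$ furnished by \eqref{eq:vectorsInImage}. At the $i$-th sub-stage I would add a single wave $\theta_w \approx a_i(t,x) e^{i\la_{q,i} \xi^{(i)} \cdot x} + \text{c.c.}$; using $m^l(-\xi) = \overline{m^l(\xi)}$, a direct computation shows that the self-interaction in $\theta_w u_w^l$, with $u_w^l = T^l[\theta_w]$, produces a low-frequency contribution equal to $|a_i|^2 A_{(i)}^l$, while the remaining pieces oscillate at high frequency and can be absorbed into the new stress. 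Because $A_{(1)}, \ldots, A_{(d)}$ span $\R^d$, one can solve for amplitudes $a_1, \ldots, a_d$ so that the accumulated self-interactions eliminate $R_q^l$; the sign constraint $|a_i|^2 \geq 0$ is handled by the same partition-of-unity and constant-shift trick used in the two-dimensional construction, possibly after splitting each sub-stage into finitely many pieces with disjoint space-time supports.

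The loss in the H\"older exponent from $1/9$ to $1/(1+4d)$ is a bookkeeping consequence of this $d$-fold iteration. Each sub-stage uses a frequency $\la_{q,i}$ of order $N$ times the previous one in order to suppress interference between waves oscillating in different directions, so the total frequency ratio between $(\theta_q, u_q^l)$ and $(\theta_{q+1}, u_{q+1}^l)$ is of order $N^d$, while the stress still decreases by only a fixed geometric factor per stage. Balancing this frequency growth against the transport error, the new stress generated by amplitude gradients, and the anomalously sharp time cutoffs (which, as in \cite{isett2}, contribute the familiar factor of $4$ in the denominator) yields the exponent $\alpha < 1/(1+4d)$, which reduces to $1/9$ when $d = 2$ in accord with Theorem~\ref{thm:mainThm}.

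The main obstacle will be controlling the $O(d^2)$ pairwise interactions between waves introduced at different sub-stages within a single stage. These cross terms are not eliminated by the self-interaction identity and must be absorbed into $R_{q+1}^l$; the sharp frequency separation between sub-stages, combined with the localized time cutoffs, is precisely what forces each cross-stress increment to be controlled by a small fraction of $R_q$ and closes the induction. This is the same technical difficulty that drives the Nash-type construction for the isometric embedding equations as in \cite{deLSzeC1iso}, to which Remark~\ref{rem:3D} refers.
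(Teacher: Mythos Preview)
Your serial outline is viable and the paper explicitly acknowledges it as an option (see Section~\ref{sec:reduceSteps}), but the paper's actual proof in Section~\ref{sec:explain:higherD} takes a different route that avoids packaging $d$ sub-stages into one stage. Instead of a Nash-type serial scheme, the paper extends the \emph{compound scalar stress equation} \eqref{eq:compScalStress} to carry $d-1$ coefficients and a residual, writing the error as $c_{A,(1)} A_{(1)}^l + \ldots + c_{A,(d-1)} A_{(d-1)}^l + R_J^l$, and correspondingly defines compound frequency energy levels with $d+1$ tiers $e_v \geq e_{R,[1]} \geq \ldots \geq e_{R,[d-1]} \geq e_J$. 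A single application of the Main Lemma then removes only the $A_{(1)}$ component but shifts all energy levels down by one slot, so the smallest level $e_J$ drops by a fixed factor at \emph{every} step rather than only after $d$ steps. This buys a uniform Main Lemma with one frequency parameter $N$ and estimates identical in form to the 2D case; the exponent then falls out of the explicit choice $N_{(k)} \sim Z^{(4d+1)/2}$, giving $\Xi_{(k)} \sim Z^{(4d+1)k/2}$ against $e_{R,[1],(k)}^{1/2} \sim Z^{-k/2}$.

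By contrast, your bookkeeping paragraph is too schematic to actually produce $1/(1+4d)$: saying the frequency ratio is $N^d$ and the stress drops by a ``fixed geometric factor'' does not determine the exponent until you specify how $N$ must scale with that factor, which in turn depends on balancing the transport term $\sim \lambda^{-1}\tau^{-1} e_R^{1/2}$ against the high-frequency interference term $\sim e_R(\Xi e_v^{1/2}\tau)$ at \emph{each} sub-stage, with the $e_v$ appearing there being the ambient velocity scale set by earlier sub-stages. Carrying this through in the serial scheme requires tracking $d$ separate $(N_i,\tau_i)$ pairs with inter-stage constraints, which is precisely the complication the paper's compound formulation is designed to sidestep. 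Your identification of the $O(d^2)$ cross terms as the main technical nuisance is accurate, but note that in the serial scheme these are high-low (not high-high) interactions since the sub-stage frequencies are well separated, and they are handled by the same $\nabla\Delta^{-1}P_{\approx\lambda}$ gain rather than by any special cancellation.
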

Theorem~\ref{thm:nDims} applies in particular to the $3D$ IPM equation, and in that case yields weak solutions with H\"older regularity $\a < 1/13$.  Note also that Theorem~\ref{thm:nDims} generalizes the two dimensional case of Theorem~\ref{thm:mainThm}.  Namely, if the even part $m(\xi^{(1)}) + m(-\xi^{(1)}) \neq 0$ is nonzero at a single point, it follows already from incompressibility (i.e. the condition $m(\xi) \cdot \xi = 0$) that the span of the image of the even part of $m$ has dimension at least $2$.

The assumption \eqref{eq:vectorsInImage} in Theorem~\ref{thm:nDims} arises quickly from the proof and turns out to be necessary for the conclusion of Theorem~\ref{thm:mainThm}.  That is to say, when the assumption \eqref{eq:vectorsInImage} fails, there are in general additional constraints on weak limits of solutions besides the conservation of the mean value.  
In the case where the multiplier is even, such constraints arise from the conservation of the integrals
\[ \fr{d}{dt} \int_{\T^n} \th(t,x) \Psi(x) dx = 0 \]
for functions $\Psi$ whose gradients take values perpendicular to the image of the multiplier.  More generally, we have the following theorem which can be applied to every multiplier that fails to satisfy \eqref{eq:vectorsInImage}:
\begin{thm}[Constraints on Weak Limits of Degenerate Multipliers]\label{thm:more:constraints}  
Consider the active scalar equation \eqref{eq:activeScalar} on a torus $\T^n$ of any dimension and suppose that the image of the even part of the multiplier lies in a hyperplane perpendicular to some nonzero vector $\xi_{(0)} \in \widehat{\T^n}$ in the dual lattice.  Then there exists a smooth function of compact support $f \in \CC_0^\infty(\R \times \T^n)$ which is real-valued and satisfies the conservation law $\int_{\T^n} f(t,x) dx = 0$ such that $f$ cannot be realized as a weak-* limit in $L^\infty$ of any sequence of bounded weak solutions to \eqref{eq:activeScalar}.
\end{thm}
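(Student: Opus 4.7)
The plan is to exhibit $f(t,x) = \phi(t)\cos(2\pi \xi_{(0)} \cdot x)$ as the obstructing function, with $\phi \in C_c^\infty(\R)$ chosen non-constant and compactly supported in the time interval of interest.  Because $\xi_{(0)} \in \widehat{\T^n} \setminus \{0\}$, this $f$ is real-valued, smooth, compactly supported in $\R \times \T^n$ and has vanishing spatial mean, so it meets all requirements of the conclusion.  The strategy is to show that every weak-$*$ $L^\infty$ limit of bounded weak solutions must have its $\xi_{(0)}$-th Fourier coefficient independent of $t$, contradicting $\hat f(t,\xi_{(0)}) = \phi(t)/2$ being non-constant.

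First I would derive an ODE for $\hat\theta(t,\xi_{(0)})$ by testing the weak formulation of \eqref{eq:activeScalar} against $e^{-2\pi i \xi_{(0)} \cdot x}$ and integrating by parts, obtaining $\frac{d}{dt}\hat\theta(t,\xi_{(0)}) = -2\pi i \int_{\T^n} (\xi_{(0)} \cdot u)\,\theta\, e^{-2\pi i \xi_{(0)} \cdot x}\, dx$.  Splitting $m = m_e + m_o$ into even and odd parts and correspondingly $u = u_e + u_o$, the hypothesis $\xi_{(0)} \cdot m_e \equiv 0$ immediately gives $\xi_{(0)} \cdot u_e \equiv 0$ pointwise, so only $u_o$ survives on the right-hand side; denote the resulting expression by $Q(\theta)(t)$.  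In the purely even case $m_o = 0$, $Q \equiv 0$ identically and $\hat\theta(t,\xi_{(0)})$ is a linear conserved quantity; the obstruction follows at once from weak-$*$ continuity of the pairing with $e^{-2\pi i \xi_{(0)} \cdot x}$.

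For the general mixed case the harder step is to show that $Q$ is weak-$*$ continuous on bounded subsets of $L^\infty$.  The reality condition $m(-\xi) = \overline{m(\xi)}$ forces $m_o$ to be purely imaginary, so $\xi_{(0)} \cdot T_o$ is a zero order Fourier multiplier with purely imaginary odd symbol, hence skew-adjoint on $L^2(\T^n)$.  Writing $\Psi(x) = e^{-2\pi i \xi_{(0)} \cdot x}$, skew-adjointness converts $Q$ into the commutator form
\[
 Q(\theta) = \pi i \int_{\T^n} \theta\, \bigl[\xi_{(0)} \cdot T_o,\,\Psi\bigr]\theta\, dx,
\]
in the spirit of the rigidity argument behind Theorem~\ref{thm:theOddCase}.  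Since $m_o$ is smooth away from the origin and homogeneous of degree zero, the commutator symbol $\xi_{(0)} \cdot m_o(\eta) - \xi_{(0)} \cdot m_o(\eta + \xi_{(0)})$ decays as $|\eta| \to \infty$, so $\bigl[\xi_{(0)} \cdot T_o,\,\Psi\bigr]$ is compact on $L^2(\T^n)$.  Compactness upgrades the weak $L^2$ convergence implied by $\theta_n \rightharpoonup^* f$ to strong convergence of $\bigl[\xi_{(0)} \cdot T_o,\,\Psi\bigr]\theta_n$, so $Q$ passes to the limit and yields $\frac{d}{dt}\hat f(t,\xi_{(0)}) = Q(f)$ in the distributional sense.

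The concluding step is a Fourier support count for the chosen $f$: its spectrum is contained in $\{\pm\xi_{(0)}\}$, so the spectrum of $\xi_{(0)} \cdot T_o f$ also lies in $\{\pm\xi_{(0)}\}$ and the spectrum of $(\xi_{(0)} \cdot T_o f)\,f$ lies in $\{0, \pm 2\xi_{(0)}\}$, which does not contain $\xi_{(0)}$ since $\xi_{(0)} \neq 0$.  Hence $Q(f) \equiv 0$, and combined with the limiting ODE this forces $\hat f(t,\xi_{(0)}) = \phi(t)/2$ to be constant in $t$, contradicting the non-constancy of $\phi$.  The main technical obstacle is verifying the weak continuity of $Q$ in the mixed case, which relies on the CZO commutator compactness argument parallel to Theorem~\ref{thm:theOddCase}; the fully even case is immediate since $Q$ then vanishes for every individual solution rather than only in the limit.
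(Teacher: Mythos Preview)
Your approach is essentially the same as the paper's: both decompose $T = T_e + T_0$, use the hypothesis $\xi_{(0)} \cdot m_e \equiv 0$ to kill the even contribution when testing against functions whose gradient lies in $\langle \xi_{(0)} \rangle$, and then invoke the odd-multiplier weak rigidity argument of Theorem~\ref{thm:theOddCase} to pass the remaining quadratic term $\int \theta\, T_0[\theta] \cdot \nabla \phi$ to the weak-$*$ limit. Your ODE for $\hat\theta(t,\xi_{(0)})$ is just the paper's constraint (Lemma~\ref{lem:conditionIfIntegerPerp}) specialized to the test function $\phi = \eta(t) e^{-2\pi i \xi_{(0)} \cdot x}$.

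The one genuine difference is the endgame. The paper takes $f = \zeta'(t)\cos(\xi_{(0)} \cdot x)$, shows the linear term $\int f\,\partial_t \phi$ is strictly positive, and then argues by scaling that at least one of $f$ or $2f$ must violate the constraint (since the linear and quadratic parts scale differently). Your Fourier-support argument is cleaner: for $f = \phi(t)\cos(2\pi\xi_{(0)}\cdot x)$ the product $(\xi_{(0)}\cdot T_0 f)\, f$ has spatial spectrum in $\{0,\pm 2\xi_{(0)}\}$, which misses $\xi_{(0)}$, so $Q(f)\equiv 0$ outright and you get a single explicit obstructing function rather than a dichotomy. This is a nice simplification.

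One point to be careful about: the compactness of the spatial commutator $[\xi_{(0)}\cdot T_0,\Psi]$ on $L^2(\T^n)$ alone is not enough to pass $Q$ through the weak-$*$ limit in $L^\infty(\R\times\T^n)$, because you do not have pointwise-in-$t$ weak $L^2_x$ convergence. You still need the time-regularity input from the equation (Aubin--Lions or the Littlewood--Paley argument in Section~\ref{sec:rididityOdd}) exactly as in the proof of Theorem~\ref{thm:theOddCase}. You acknowledge the parallel, but the phrase ``compactness upgrades the weak $L^2$ convergence'' understates this; make sure the write-up invokes the full two-step (spatial commutator plus time compactness from the PDE) rather than the commutator alone.
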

The proof of Theorem~\ref{thm:more:constraints} draws on the proof of weak compactness in Theorem~\ref{thm:theOddCase}.  One can compare condition \eqref{eq:vectorsInImage} to criteria for having a large $\La$-convex hull in the theory of differential inclusions (e.g. \cite{kMuSve,deLSzeIncl,laszloIPM}).

\begin{rem}[Non-smooth Symbols] \label{rem:non-smooth}
In view of the example of the MG equation, it is important to remark that our proof applies also to multipliers which are not smooth.  In fact, the only regularity condition we require in our proof is that the multiplier should be smooth in a neighborhood of the points $\xi^{(1)}, \xi^{(2)}, \ldots, \xi^{(d)}$ and $- \xi^{(1)}, -\xi^{(2)}, \ldots, -\xi^{(d)}$ appearing in \eqref{eq:vectorsInImage}.  Thus Theorem~\ref{thm:nDims} applies to the MG equation, if we take for example the points $\xi^{(1)} = \langle 1, 0, 1 \rangle$, $\xi^{(2)} = \langle 0, 1, 1 \rangle$, $\xi^{(3)} = \langle 1, 1, 1 \rangle$.
\end{rem}

\subsection{Difficulties and new ideas}

The proof of Theorem~\ref{thm:nDims} contains a number of new ideas in the method of convex integration, which we summarize before we begin the proof.  

As stated earlier in the Introduction, our main idea is a new mechanism for obtaining cancellations in interference terms between overlapping waves. This allows us to get around the lack of Beltrami flows, or their analogues, as the type of cancellation given by such flows is entirely unavailable in our setting (cf.~Section~\ref{sec:highFreqInterference}). This idea gives a new and general approach to constructing continuous weak solutions\footnote{We note, however, this idea alone obtains a lesser H\"{o}lder regularity compared to the Beltrami flow approach to Euler.} which generalizes also to Euler.  The idea is based on the observation that self-interference terms vanish automatically thanks to the incompressible nature of the equation.  

The above idea opens the door to a multi-stage iteration scheme based on one-dimensional oscillations, as in the original scheme of Nash for isometric embeddings applied in \cite{nashC1, deLSzeC1iso}.  This type of scheme had previously appeared unavailable in the setting of the Euler equations (see \cite[Section 1.3, Comment 2]{deLSzeCts}).  On the other hand, while implementing a scheme exactly of this type now appears to be possible, it also appears to be relatively complicated, requiring the addition of several iterations of waves (each with their own time, length scale and frequency parameters) before the error improves in the $C^0$ norm.  We manage to avoid these complications by defining a space of approximate solutions by a {\it compound scalar stress equation}.  This concept allows us to obtain a $C^0$ improvement after only one iteration, which simplifies the iteration and gives estimates which are much closer to the bounds familiar from the case of Euler.

The main new technical difficulty in obtaining continuous solutions to active scalar equations
lies in how to deal with the integral operator in the equation which determines the drift velocity $u^l = T^l[\th]$.  The whole construction is based on high frequency, plane-wave type corrections of the form $e^{i \la \xi_I(t,x)} \th_I(t,x)$, and it is necessary to understand very precisely how adding such waves will affect the drift velocity.  Furthermore, the convex integration schemes for producing H\"{o}lder continuous Euler flows all use heavily $C^0$ type estimates on all error terms.  From this point of view, the failure of $C^0$ boundedness of $T^l$ suggests some serious trouble.  

Our main technical device for addressing this difficulty is a ``Microlocal Lemma'' (Lemma~\ref{lem:microlocal}).  This lemma makes precise how a convolution operator behaves to leading order like a multiplication operator when given a high-frequency plane wave input, allowing for the use of nonlinear phase functions.  In the case of the operator $T^l$, represented on the Fourier side by the multiplier $m^l(\xi)$, our lemma gives a statement of the form
\[ u^l = T^l[e^{i \la \xi(x)} \th(x) ] = e^{i \la \xi(x)}( \th m^l(\nab \xi(x)) + \de u^l) \]
and gives an explicit formula for the error term $\de u^l$ (which also allows us to estimate its spatial and advective derivatives).  We expect that this technique should be of independent interest for other applications.  

To address the lack of $C^0$ boundedness of $T^l$, our proof makes additional use of the frequency localization in the construction, which allows for the effective application of the Microlocal Lemma.  A number of other simplifications in the argument arise from the use of frequency localized waves.  For instance, many error terms can be estimated in a simpler way than in previous works, and we remove the need for nonstationary phase arguments in solving the relevant elliptic equations.

In connection with our space of approximate solutions, we introduce a family of estimates we call {\it compound frequency energy levels}.  These estimates generalize to active scalars the frequency energy levels introduced in \cite{isett}.  These bounds have the key feature that they carry $C^0$ type estimates for derivatives of the drift velocity along the iteration.  Otherwise, the lack of $C^0$ boundedness of $T^l$ would prohibit us from deducing these estimates from the bounds on the scalar field. 

\subsection{Outline of the Paper}
The overall strategy for the construction is outlined in Section~\ref{sec:basicTechOutline}.  The bulk of the paper then consists of proving the ``Main Lemma'', Lemma~\ref{sec:mainLem}, which is stated in Section~\ref{sec:mainLem}.  After the statement of the Main Lemma, Section~\ref{sec:microlocalLem} is devoted to the proof of a ``Microlocal Lemma'', which is one of the main technical tools in the paper.  Sections~\ref{sec:construction}-\ref{sec:boundNewStress} are then devoted to proving Lemma~\ref{sec:mainLem}.

In Section~\ref{sec:mainLemWorks}, we explain how the Main Lemma implies the results stated in Theorem~\ref{thm:mainThm} and Corollary~\ref{cor:weakLims}.  Section~\ref{sec:arbitNonuniqueProof} provides an outline of how Theorem~\ref{thm:arbitraryNonuniqueness} also follows from the same Lemma.  The modifications used to prove Theorem~\ref{thm:nDims} regarding higher dimensions are explained in Section~\ref{sec:explain:higherD}. 

Sections \ref{sec:rididityOdd} and \ref{sec:hamiltonianOdd} are devoted to the rigidity properties of weak solutions in the case of odd multipliers.  In Section~\ref{sec:rididityOdd}, we give a proof of Theorem~\ref{thm:theOddCase} on the rigidity of solutions under weak limits when the multiplier is odd.  Section~\ref{sec:hamiltonianOdd} is then devoted to the proof of Theorem~\ref{thm:odd:hamiltonian} on the conservation of the Hamiltonian for active scalars with odd multipliers in dimension $2$.

The last Section~\ref{sec:constraints} is devoted to proving Theorem~\ref{thm:more:constraints}, which shows that the nondegeneracy condition in Theorem~\ref{thm:nDims} is necessary in general for the weak limit statement of Theorem~\ref{thm:mainThm} to apply in higher dimensions.  In Section~\ref{sec:conclusions} we give a conclusion to the paper and state some open questions.


\subsection{Notation}

We use the Einstein summation convention of summing over indices which are repeated.  We take the convention that vectors are written with upper indices, whereas covectors are written with lower indices; thus, for a vector field $u^l$ and function $\xi$, we write $u \cdot \nab \xi = u^l \pr_l \xi$ and $\tx{div } u = \pr_l u^l$.  

We use the notation $X \unlhd Y$ to indicate an inequalities $X \leq Y$ which have not been proven, but will be proven later on in the course of the argument.  We sometimes refer to such inequalities as ``goals''.  


%
%
%
%

\section{Basic Technical Outline} \label{sec:basicTechOutline}
In this Section, we give a technical outline of the main ideas of the construction which includes a list of the important error terms and provides a comparison to the cases of the Euler and isometric embedding equations.  This section provides the basic ideas to motivate the statement of the Main Lemma of Section~\ref{sec:mainLem}.

We will perform the construction in a space of approximate solutions to the active scalar equation which we now define.

We say that $(\th, u^l, R^l)$ satisfy the {\bf scalar-stress} equation if
\ali{
\label{eq:scalStress}
\left \{
\begin{aligned}
\pr_t \th + \pr_l(\th u^l) &= \pr_l R^l  \\
u^l &= T^l(\th) 
\end{aligned}
\right.
}
This system is the analogue for active scalar equations of the Euler-Reynolds system introduced in \cite{deLSzeCts} for the Euler equations.  Here $R^l$ is a vector field on $\T^2$ that we call the ``stress field'' (by analogy with the stress tensor $R^{jl}$ in the Euler-Reynolds equations) which measures the error by which $\th$ fails to solve the active scalar equation.

Recall that the operator 
\[ T^l[\th] = \int_{\R^2} K^l(h) \th(x-h) dh  \]
is a convolution operator with a real-valued kernel $K^l$ which is homogenous of degree $-2$ as a distribution.  The corresponding Fourier multiplier 
\ali{
m^l(\xi) &= {\hat K}^l(\xi) 
}
is homogeneous of degree $0$, satisfies $m^l(-\xi) = \overline{m^l(\xi)}$, and we assume that $m^l(\xi)$ is smooth on $|\xi| = 1$ (and therefore smooth away from the origin).  To ensure that  $u^l = T^l[\th]$ satisfies the divergence free condition $\pr_l u^l = 0$, we require that 
\ali{
 m(\xi)\cdot \xi &= 0
}
At a high level, the basic idea of the convex integration construction is to start with a given solution $(\th, u^l, R^l)$ to \eqref{eq:scalStress}, and proceed to add a (high-frequency) correction $\Th$ to the scalar field $\th$, so that the corrected scalar field and drift velocity 
\ali{
\th_1 = \th + \Th, \qquad u_1^l = u^l + U^l, \qquad U^l = T^l[\Th]
} 
satisfy the scalar stress equation \eqref{eq:scalStress} with a new stress field $R_1^l$ that is significantly smaller than the original stress field $R^l$.  These corrections are added in an iteration to obtain a sequence of solutions to \eqref{eq:scalStress}
\[ (\th_{(k)}, u_{(k)}^l, R_{(k)}^l) \]
such that $R_{(k)}^l \to 0$ as the number of iterations $k$ tends to infinity.  From dimensional analysis and experience with the isometric embedding and Euler equations, we expect an estimate $\| \Th_{(k)} \|_{C^0} \leq C \|R_{(k)} \|_{C^0}^{1/2}$ for the size of the corrections, so that we will obtain continuous solutions in the limit provided $\|R_{(k)} \|_{C^0}$ tends to $0$ at a reasonable rate\footnote{ In our case, the error will converge to zero exponentially fast: $ \| R_{(k)} \|_{C^0} \leq C_1 e^{- C_2 k}$ for some constants $C_1, C_2 > 0$.}.  On the other hand, the $C^1$ norms of the corrections $\| \nab \Th_{(k)} \|_{C^0}$ will diverge as the frequencies in the iteration grow to infinity, and we prove convergence of the iteration in H\"{o}lder spaces by interpolating between the bounds for $\| \Th_{(k)} \|_{C^0}$ and $\| \nab \Th_{(k)} \|_{C^0}$ after the construction has been optimized to reduce the stress field $\| R_{(k)} \|_{C^0}$ at the most efficient rate possible.  Although this description explains how the scheme works at a high level, we must study the equation and the scheme in much more detail before it is clear that there is any hope of reducing the stress field $R^l$ in this manner.

As in \cite{isett}, we will consider corrections built from rapidly oscillating ``plane waves'' where we allow for phase functions $\xi_I$ and amplitudes $\th_I$ which depend on space and time
\ali{
\Th &= \sum_I \Th_I \label{eq:correctionForm} \\
\Th_I &= e^{i \la \xi_I} (\th_I + \de \th_I) \label{eq:correctionsLookLike1}
}
The amplitude $\th_I$ and the phase functions $\xi_I$ are scalar functions of our choice, which vary slowly compared to the frequency parameter $\la$.  The term $\de \th_I$ is a small correction term which will be made precise later.  Each wave $\Th_I$ has a conjugate wave $\Th_{\bar{I}} = \overline{\Th}_I$ with opposite phase function $\xi_{\bar{I}} = - \xi_I$ and amplitude $\th_{\bar{I}} = \bar{\th}_I$ so that the overall correction is real valued.

We now proceed to calculate the equation satisfied by the corrected scalar field $\th_1 = \th + \Th$.  This requires us to calculate the new drift velocity $u_1^l = T^l[\th_1] = u^l + U^l$, where $U^l = T^l[\Th]$.  Our main tool for this calculation is a Microlocal Lemma, which in this case guarantees that each wave $\Th_I$ gives rise to a velocity field
\ali{
U_I^l = T^l[\Th_I] &= e^{i \la \xi_I} (u_I^l + \de u_I^l) \label{eq:microlocalLemmaInAction} \\
 u_I^l &= m^l(\nab \xi) \th_I \label{eq:microlocalLemMultiplier}
}
with amplitude determined by the Fourier multiplier $m^l(\xi)$ in the definition of $T^l$.  

The amplitude $u_I^l$ thus has the size comparable to $\th_I$, while the term $\de u_I$ is a small correction of the same order as $\de \th_I$.  Thus, given a highly oscillatory input such as $\Th_I = e^{i \la \xi_I} \th_I$, the operator $T^l$ behaves to leading order like a multiplication operator on the amplitude.  (For our purposes, the simplest way to achieve equation \eqref{eq:microlocalLemmaInAction} will be to use phase functions defined on the whole torus $\T^2$, but this will not be a serious restriction.)

From the Ansatz \eqref{eq:correctionForm} and equation \eqref{eq:scalStress}, we see that the corrected scalar field $\th_1 = \th + \Th$ satisfies the equation
\ali{
\pr_t \th_1 + \pr_l(u_1^l \th_1) &= \pr_t \Th + \pr_l( u^l \Th) + \pr_l(U^l \th) + \pr_l( U^l \Th + R^l)  \label{eq:newTh1}
}
We now expand $\Th$ and $U$ into individual waves using \eqref{eq:correctionForm} to derive
\ali{
\pr_t \th_1 + \pr_l(u_1^l \th_1) =& \pr_t \Th + \pr_l( u^l \Th) + \pr_l(U^l \th) \label{eq:transportTermAppears} \\
&+ \sum_{J \neq \bar{I}} \pr_l(U_J^l \Th_I) + \pr_l( \sum_I U_I^l \Th_{\bar{I}} + R^l )  \label{eq:highAndStressTerm}
}
Our goal is to design the correction $\Th$ so that the forcing terms on the right hand side of \eqref{eq:transportTermAppears}-\eqref{eq:highAndStressTerm} can be represented in divergence form $\pr_l R_1^l$ for a vector field $R_1^l$ which is significantly smaller in $C^0$ than the previous error $R^l$.

\subsection{The Stress Term } \label{sec:stressTerm}
Our first goal is to cancel out the term $R^l$ appearing in the rightmost term of \eqref{eq:highAndStressTerm}, which is the only term in equations \eqref{eq:transportTermAppears}-\eqref{eq:highAndStressTerm} that has low frequency.  We expand this term using \eqref{eq:microlocalLemmaInAction}-\eqref{eq:microlocalLemMultiplier} as
\ali{
 \sum_I U_I^l \Th_{\bar{I}} + R^l &= \fr{1}{2} \sum_I(U_I^l \Th_{\bar{I}} + U_{\bar{I}}^l \Th_I) + R^l \notag \\
&\approx \fr{1}{2} \sum_I ( u_I^l \th_{\bar{I}} + u_{\bar{I}}^l \th_I ) + R^l \notag \\
&= \fr{1}{2} \sum_I |\th_I|^2 (m^l(\nab \xi_I) + m^l(- \nab \xi_I)) + R^l \label{eq:theAlgebraAppears} \\
&= \fr{1}{2} \sum_I |\th_I|^2 (m^l(\nab \xi_I) + \overline{m^l(\nab \xi_I)} ) + R^l 
}
where the error terms are lower order, involving $\de \th_I$ and $\de u_I^l$.  Here we can see already why we are restricted to multipliers $m^l( \cdot )$ which are not odd.  Namely, for an odd multiplier $m^l(-\xi) = -m^l(\xi)$, the high frequency interactions fail to leave a nontrivial low frequency part.  In other words, the obstruction is that we lack a high-low frequency cascade.  

We therefore assume now that the multiplier $m^l$ is not odd.  Together with the divergence free property $\xi_l m^l(\xi) = 0$ and the degree zero homogeneity of the symbol $m^l(\cdot)$, this condition implies that there are linearly independent vectors in the image of the even part of the multiplier
\ali{
 A^l = m^l(\xi^{(1)}) + m^l(-\xi^{(1)}), \qquad B^l = m^l(\xi^{(2)}) + m^l(-\xi^{(2)})  \label{eq:ourAlandBl}
}
where $\xi^{(1)}, \xi^{(2)} \in \Z^2 = {\widehat \T}^2$ are nonzero frequencies with integer entries.  

At this point, since we now have two vectors $A^l$ and $B^l$ in the image of the even part of $m^l$ that are linearly independent, there is some hope to get the terms in \eqref{eq:theAlgebraAppears} to cancel out.  Namely, one should first make sure that the phase gradients $\nab \xi_I$ are perturbations of the directions $\xi^{(1)}, \xi^{(2)}$ so that each wave yields a velocity field taking values in the direction $(m^l(\nab \xi_I) + m^l(- \nab \xi_I) \approx A^l$ or in the direction $(m^l(\nab \xi_I) + m^l(- \nab \xi_I) \approx B^l$.  One would then like to choose coefficients $\th_I$ so that terms $|\th_I|^2 (m^l(\nab \xi_I) + m^l(- \nab \xi_I))$ in \eqref{eq:theAlgebraAppears} form the appropriate linear combinations of $A^l$ and $B^l$ needed to cancel out $R^l$.  

However, there is an immediate difficulty in implementing the above approach.  Namely, although we know that $A^l$ and $B^l$ are linearly independent, it may not be case that $R^l$ can be written as a linear combination of $A^l$ and $B^l$ with {\bf non-negative} coefficients $|\th_I|^2$.  To get around this difficulty, we take advantage of a degree of freedom which already played an important role in the arguments of \cite{corFarGanPor} and \cite{shvConvInt}.  Namely, observe that we do not need to solve the equation \eqref{eq:theAlgebraAppears} exactly, but need only ensure that \eqref{eq:theAlgebraAppears} is divergence free.  This freedom allows us to subtract from \eqref{eq:highAndStressTerm} any vector field $e(t) \de^l$ which is constant in space and depends only on time.  Therefore the equation we actually solve is more similar to
\ali{
\fr{1}{2} \sum_I |\th_I|^2 (m^l(\nab \xi_I) + \overline{m^l(\nab \xi_I)} ) &= e(t) \de^l - R_\ep^l \label{eq:shiftedAlgebraEquation}
}
where $R_\ep^l$ is a regularized version of $R^l$ and $\de^l$ is a constant vector field.  If we choose $\de^l = A^l + B^l$ and make sure that $e(t)$ is bounded below by, say, $e(t) \geq 100 \| R_\ep \|_{C^0}$ on the support of $R_\ep$, then the coefficients $|\th_I|^2$ solving \eqref{eq:shiftedAlgebraEquation} can be guaranteed to be non-negative.  Observe also that the equation \eqref{eq:shiftedAlgebraEquation} leads to the bounds $\|\th_I\|_{C^0} \leq C \| R_\ep \|_{C^0}^{1/2}$ for the amplitudes.

The role played by the function $e(t) \de^l$ is the same as the role played by the low frequency part of the pressure correction in the scheme for Euler~\cite[Section 7.3]{isett}.  This device in some way appears to limit our proof to the periodic setting.


\subsection{The High Frequency Interference Terms} \label{sec:highFreqInterference}
Controlling the interference terms between high frequency waves is a fundamental difficulty in convex integration.  In our case, the interference terms require solving the elliptic equation
\ali{
\pr_l R_H^l &= \sum_{J \neq \bar{I}}  \pr_l(U_J^l \Th_I) = \sum_{J \neq \bar{I}}  U_J^l \pr_l\Th_I   \\
&= \fr{1}{2} \sum_{J \neq \bar{I}} (U_J^l \pr_l \Th_I + U_I^l \pr_l \Th_J) 
}
To leading order, these terms have the form
\ali{
\pr_l R_H^l &= \fr{1}{2} (i \la) \sum_{J \neq \bar{I}} e^{i \la(\xi_I + \xi_J)}(u_J^l \pr_l \xi_I \th_I + u_I^l \pr_l \xi_J \th_J) + \ldots \label{eq:theHighHighTerms} \\
&= \fr{1}{2} (i \la) \sum_{J \neq \bar{I}} e^{i \la(\xi_I + \xi_J)}\th_I \th_J (m^l(\nab \xi_J) \pr_l \xi_I + m^l(\nab \xi_I) \pr_l \xi_J) + \ldots \label{eq:theHighHighTerms1}
}
We expect to a gain a factor of $\la^{-1}$ while inverting the divergence in \eqref{eq:theHighHighTerms}; however, solving \eqref{eq:theHighHighTerms} leads in principle to a solution $R_H^l$ of size $\co{R_H} \leq  \co{\sum_I |\th_I|^2} \leq \co{ R }$, which is not even an improvement on the size of the previous error $R^l$.  These terms therefore seem to already prohibit the construction of continuous solutions by convex integration.  The same difficulty also arises for the Euler equations.

For the Euler equations, the key idea introduced in \cite{deLSzeCts} which made it possible to handle high frequency interference terms similar to \eqref{eq:theHighHighTerms} was to construct the high frequency building blocks using a family of stationary solutions to the Euler equations known as Beltrami flows.  Specifically, the basic building blocks in the construction \cite{deLSzeCts} are constructed using vector fields of the form $B^l e^{i k \cdot x}$ where $B^l$ is a constant vector amplitude, $k \cdot x$ is a linear phase function, and we have $(i k )\times B^l = |k| B^l$ so that the experession $B^l e^{i k \cdot x}$ is an eigenfunction of curl and hence a stationary solution to Euler.  The idea of using Beltrami flows was adapted in \cite{isett} to building blocks $V_I = e^{i \la \xi_I} v_I$ with nonlinear phase functions $\xi_I$ by imposing a ``microlocal Beltrami flow'' condition that $(i \nab \xi_I) \times v_I = |\nab \xi_I| v_I$ pointwise.  Viewed from this latter approach, the role of the Beltrami flow condition is to ensure that the leading term in \eqref{eq:theHighHighTerms1} cancels out.

For the active scalar equations we consider here, such a family of stationary solutions is not available, and moreover we do not have any method to control interference terms between waves which oscillate in distinct directions.  For instance, suppose that the multiplier $m^l(\xi)$ is even, and suppose that $\xi_1, \xi_2 \in {\hat \R}^2$ are linearly independent frequencies for which the terms in \eqref{eq:theHighHighTerms1} cancel
\[ m(\xi_1) \cdot \xi_2 \pm m(\xi_2) \cdot \xi_1 = 0 \]
It then follows from the conditions $m(\xi_1) \cdot \xi_1 = 0, m(\xi_2) \cdot \xi_2 = 0$ that both $m(\xi_1)$ and $m(\xi_2)$ must be equal to $0$.  More generally, one can show that the even part of the multiplier must vanish when applied to both frequencies
\[ m^l(\xi_1) + m^l(-\xi_1) = m^l(\xi_2) + m^l(-\xi_2) = 0 \]
if we assume that all of the interference terms in \eqref{eq:theHighHighTerms1} cancel.  This vanishing of the even part would prohibit any nontrivial contribution to \eqref{eq:theAlgebraAppears}.  In contrast, in the case of the surface quasigeostraphic equation where the drift velocity is given by $u = \nab^\perp (- \De)^{-1/2} \th$, the set of Laplace eigenfunctions provides a large family of high frequency, stationary solutions.  However, in this case the multiplier $m(\xi) = i \langle-\xi_2,\xi_1\rangle |\xi|^{-1}$ is odd and we have already seen that such multipliers are out of reach of our method.

Our main observation which allows us to handle these terms is the fact that the interference terms which arise when an individual wave interacts with {\bf itself} always vanish to leading order from the structure of the equations.  Namely, if we look at a single index $J = I$, then from the divergence free condition for the symbol $m(\xi) \cdot \xi = 0$ we see that the leading term in \eqref{eq:theHighHighTerms1} gives no contribution
\[  (m^l(\nab \xi_I) \pr_l \xi_I + m^l(\nab \xi_I) \pr_l \xi_I) = 0 \]
Therefore, while we lack a method to control interference terms between waves which oscillate in different directions, we can still pursue an approach where in each step of the iteration we use corrections $\Th$ containing waves waves which oscillate in only a single direction and thus do not interfere with each other.  

\subsubsection{Comparison with the Euler and Isometric Embedding equations} \label{sec:compareEulerIso}

In this Section, we remark on how our observation also gives a new approach to building weak solutions to the Euler equations which is independent of Beltrami flows, and explain why we expect a loss of regularity by comparing to analogous considerations in the case of the isometric embedding equations.

Our observation of vanishing self-interference terms applies in the case of the Euler equations as well.  For the Euler equations, an individual wave is a velocity field which takes the form $V_I = e^{i \la \xi_I}( v_I + \de v_I )$, and we require that the amplitude takes values in $v_I \in \langle \nab \xi_I \rangle^\perp$ in order to ensure the divergence free condition for $V_I$.  In this case, the high frequency interference terms between an individual wave and itself have the form
\ali{
 V_I^j \pr_j V_I^l &= (i \la) e^{2 i \la \xi_I} v_I^j \pr_j \xi_I v_I^l + \tx{ lower order terms }  \label{eq:noSelfTermsEuler}
}
Observe that the requirement $v_I \cdot \nab \xi_I = 0$ forces the the main contribution to cancel.  Thus, the method we apply here in principle generalizes to give a new approach to producing H\"{o}lder continuous weak solutions to the Euler equations which entirely avoids the use of Beltrami flows and applies in arbitrary dimensions.  Our observation appears to be quite natural in that the key cancellation we exploit comes immediately from the structure of the equations themselves without imposing any particular Ansatz in the construction.  On the other hand, in contrast to the use of Beltrami flows for Euler, we are restricted here to removing one component of the error at a time during the iteration, which ultimately results in a loss of regularity in the solutions obtained from the construction.

The reason we expect to lose regularity from the restriction of removing one component of the error each stage comes from experience with the isometric embedding equations from the work of Conti, De Lellis and Sz\'{e}kelyhidi \cite{deLSzeC1iso}.  For these equations, there is currently no method available for controlling the relevant interference terms between high frequency waves for embeddings of codimension $1$, and this obstruction leads to a loss of regularity for the solutions obtained through convex integration.  Namely, without a method to control interference terms between distinct waves, it is only possible to eliminate a single, rank one component of the metric error in each step of the iteration from the addition of a single wave.  Consequently, it is necessary to increase the frequencies of the waves multiple times before any $C^0$ improvement in the metric error can be realized, which leads to a loss of regularity.  
In contrast, the use of Beltrami flows for the Euler equations allows for the addition of waves which oscillate at the same frequency level in several different directions, and the stress error can be made smaller in $C^0$ after only one step of the iteration.  Since our scheme suffers from the same deficiency as in the case of isometric embeddings (that is, we cannot use waves at equal frequency levels which oscillate in multiple directions), it turns out that our scheme is limited to a H\"{o}lder exponent which is inferior to the exponent $1/5$ achieved for the Euler equations.

The restriction to eliminating a single component of the error in each step of the iteration also threatens to make our proof considerably more complicated than the scheme used for Euler.  While we are unable to avoid the loss of regularity, we are at least able to keep the overall complexity of the argument to be essentially no more complicated than the scheme used for Euler.  This simplification is accomplished by introducing a new technique, which we explain in the following Section.



\subsection{Reducing the Steps in the Iteration} \label{sec:reduceSteps}

From the discussion in Section \ref{sec:compareEulerIso}, we can now consider a {\bf serial convex integration scheme} wherein we cannot reduce the size of the error term $R^l$ until we have added a series of two corrections
\ali{
\th_1 &= \th + \Th_{(1)} + \Th_{(2)} \label{eq:need2Corrections}
}
Following the original scheme of Nash \cite{nashC1} in the isometric embedding problem, we should first decompose $R^l$ into components as
\[ R^l = c_A A^l + c_B B^l \]
where $A^l$ and $B^l$ are linearly independent vectors in the image of $m^l(\xi) + m^l(-\xi)$ defined in \eqref{eq:ourAlandBl}.  The first correction $\Th_{(1)}$ to $\th$ should oscillate in the $\xi^{(1)}$ direction in order to eliminate the $A^l$ component of the error $R^l$ by the method described in Section \ref{sec:stressTerm}.  Then, the second correction $\Th_{(2)}$ should have an even larger frequency than $\Th_{(1)}$, but the same amplitude $| \Th_{(1)}| \sim |\Th_{(2)}| \sim |R|^{1/2}$, since its purpose is to eliminate the $B^l$ component of the error $R^l$.  Thus, one stage of the convex integration is completed after two steps, where each step involves eliminating one component of the error, and the error $R^l$ is smaller in $C^0$ only at the end of the stage.

It appears that such a serial convex integration scheme should be possible for active scalar equations and should lead to the same H\"{o}lder exponent $1/9$ that we achieve here.  On the other hand, such a serial proof seems to be somewhat complicated compared to the ``one-step'' scheme used for Euler or to the case of the isometric embedding equations.  In our case, a serial proof would involve treating a larger number of error terms having unfamiliar estimates, and optimizing a larger number of time, frequency and length scale parameters.  We avoid these additional complexities by making a simple observation that allows us to reduce the $C^0$ norm of the error in a single step of the iteration rather than several.  It turns out that this idea also causes most of the terms in the construction to obey estimates which are familiar from experience with the Euler equations, amounting to an overall more transparent proof.


Our observation which allows us to reduce the error in every stage of the iteration and thereby simplify our proof is the following.  First, note that the addition of the first correction $\Th_{(1)}$ results in a remaining error $R_{(1)}^l$ of the form
\ali{
R_{(1)}^l &= c_B B^l + R_{E}^l \label{eq:compScalStressBAppears}
}
where $R_{E}^l$ is much smaller than the original error $R^l$, whereas the term $|c_B B^l| \sim |R^l|$ has the same size.  Rather than using the second correction $\Th_{(2)}$ to eliminate the term $c_B B^l$ as discussed previously, we observe that we can simultaneously get rid of the $B^l$ component of the small term $R_{E}^l$, thus leaving an error of the form
\ali{
R_{1}^l &= c_A A^l + R_J^l \label{eq:compScalStressAppears}
}
where $c_A A^l$ is the remaining $A^l$ component of $R_E^l$, and the term $R_J^l$ is an even smaller error term.  For our next correction, we can repeat the same idea and eliminate the $A^l$ component of \eqref{eq:compScalStressAppears}, leaving an error of the form \eqref{eq:compScalStressBAppears}.  Continuing in this way, we see that each correction now causes an improvement in the size of the error in the $C^0$ topology, just as in the situation for Euler.  


The above discussion has been based on the hope that we can really eliminate the $A^l$ and $B^l$ components of the error, which is not entirely justified at this point.  In fact, there are some further difficulties which stand in our way before this task can be accomplished which will become more clear as we specify the construction.  One such difficulty is the appearance of low frequency interference terms.

\subsection{Low Frequency Interference Terms}

It turns out that the most straightforward approach to the construction based on the ideas Section \ref{sec:highFreqInterference} gives rise to certain interference terms of low to intermediate frequency which apparently prohibit the success of our scheme.  Thus, while the idea introduced in Section \ref{sec:highFreqInterference} allows us to control the high frequency interference terms in a sactifactory manner, we must incorporate one additional idea into the construction before our scheme can handle every type of error term which arises.  

The ideas in Section \ref{sec:highFreqInterference} suggest that a natural approach to the construction is to use waves of the form $\Th_I = e^{i \la \xi_I}(\th_I + \de \th_I)$ where the phase functions $\xi_I$ oscillate in the direction $\pm \xi^{(1)}$ (or $\pm \xi^{(2)}$) in the sense that the gradients remain close to their common initial values
\ali{
\nab \xi_I \approx \pm \xi^{(1)} \label{eq:plusMinusBadIdea}
}
For an index $I$, let us write $f(I) \in \{ \pm \}$ to denote the sign appearing in \eqref{eq:plusMinusBadIdea}. 

According to Section \ref{sec:highFreqInterference}, we have a method to ensure that high frequency nonlinear interference terms obey good bounds.  Thus, every interaction term of the form
\ali{
 \pr_l( \Th_I U_J^l + \Th_J U_I^l) \label{eq:aHighFreqErrorExmpl}
}
which arises between waves of the same sign $f(I) = f(J) \in \{ \pm \}$ can be handled by our method, as these terms are all of high frequency.

A new difficulty arises when we consider interference terms between waves of opposite signs $f(I) = - f(J)$, which we call ``Low-Frequency Interference Terms''.  In this case, the terms of the form $\Th_I U_J^l + \Th_J U_I^l$ as in \eqref{eq:aHighFreqErrorExmpl} can be expressed to leading order as
\ali{
\Th_I U_J^l + \Th_J U_I^l &\approx e^{i \la ( \xi_I + \xi_J) } ( \th_I u_J^l + \th_J u_I^l) \label{eq:LowFreqIntTerms}
}
When we consider indices with opposite signs $f(I) = - f(J)$, the term \eqref{eq:LowFreqIntTerms} cannot be viewed as a high frequency error term.  In the worst case it may even be true that $\nab (\xi_I + \xi_J) = 0$ thanks to the initial conditions satisfying \eqref{eq:plusMinusBadIdea}.

It turns out that having low frequency interference terms of the form \eqref{eq:LowFreqIntTerms} prevents us from solving the quadratic equation to determine the amplitudes $\th_I$.  To see this difficulty, note that the left hand side of the equation analogous to \eqref{eq:shiftedAlgebraEquation}, which includes all low frequency interactions, would have to include terms of the form
\ali{
\sum_{\substack{ I,J \\ f(I) = +, ~f(J) = -}} \Th_I U_J^l + \Th_J U_I^l &=  \sum_{\substack{I, J \\f(I) = f(J) = +}} e^{i \la ( \xi_I - \xi_J) } (\th_I \bar{\th}_J + \th_J \bar{\th}_I) A^l + \ldots \label{eq:weTryLowFreqIntTerms}
}
Remarkably, the right hand side of \eqref{eq:weTryLowFreqIntTerms} appears to obey all the estimates we would require for obtaining solutions with H\"{o}lder regularity $1/9-$, despite the appearance of the parameter $\la$.  The problem is that the right hand side of \eqref{eq:weTryLowFreqIntTerms} must remain bounded from $0$ in order to solve the quadratic equation for the amplitudes.  On the other hand, there is no way to preclude the possibility that the series \eqref{eq:weTryLowFreqIntTerms} cancels completely at points $(t,x)$ on which the amplitudes $\th_I(t,x)$ and $\th_J(t,x)$ have essentially the same size, due to the presence of the oscillating factors $e^{i \la ( \xi_I - \xi_J) }$ in the cross terms arising from distinct indices $J \neq I$.


At first sight, this difficulty would seem to completely prevent us even from achieving continuous solutions, as we are left with no way to obtain a $C^0$ improvement in the size of the error on the regions where distinct indices interact.  We overcome this obstruction by making one more adjustment to the construction.  Roughly speaking, our idea is to allow the condition \eqref{eq:plusMinusBadIdea} to be satisfied by ``half'' the waves in our construction, whereas the other ``half'' of the waves in the construction involve phase functions with initial data satisfying
\ali{
\nab \xi_I \approx \pm 10 \xi^{(1)} \label{eq:theTenTrick}
}
Furthermore, we ensure that every nonlinear interaction which takes place between nonconjugate waves involves one wave satisfying \eqref{eq:plusMinusBadIdea}, and a second wave satisfying \eqref{eq:theTenTrick}.  In this way, every interference term of the form \eqref{eq:LowFreqIntTerms} is actually a high frequency error term. 
Moreover, every wave oscillates in a direction essentially parallel to $\xi^{(1)}$, so that the idea of Section \ref{sec:highFreqInterference} still applies to treat these high frequency interference terms.

With these ideas in hand, we are now ready to proceed with the formal construction in detail, beginning with the statement of the Main Lemma.

\section{The Main Lemma} \label{sec:mainLem}

In order to state the main lemma, let us recall that we have fixed once and for all a choice of linearly independent vectors
\ali{
 A^l = m^l(\xi^{(1)}) + m^l(-\xi^{(1)}), \qquad B^l = m^l(\xi^{(2)}) + m^l(-\xi^{(2)}) \label{eq:AlBlare}
}
where $\xi^{(1)}, \xi^{(2)} \in \Z^2 = {\widehat \T}^2$ are nonzero (integral) frequencies.  The existence of these vectors is guaranteed by the condition that $m^l(\xi)$ is not odd, and the orthogonality condition $\xi_l m^l(\xi) = 0$.

\begin{defn}  For a constant vector $A^l$, we say that $(\th, u^l, c_A, R_J^l)$ satisfy the {\bf Compound Scalar-Stress} equation (with vector $A^l$) if 
\ali{
\label{eq:compScalStress}
\left \{
\begin{aligned}
\pr_t \th + \pr_l(\th u^l) &= \pr_l( c_A A^l + R_J^l)  \\
u^l &= T^l(\th) 
\end{aligned}
\right.
}
In this case, we will refer to the tuple $(\th, u^l, c_A, R_J^l)$ as a {\bf compound scalar-stress field}.
\end{defn}

For a solution to the compound scalar-stress equation \eqref{eq:scalStress}, we define compound frequency-energy levels to be the following
\begin{defn} \label{def:freqEnDef}
Let $L \geq 1$ be a fixed integer.  Let $\Xi \geq 2$, and let $e_v$, $e_R$ and $e_J$ be positive numbers with $e_J \leq e_R \leq e_v$.  We say that $(\th, u^l, c_A, R^l)$ have frequency and energy levels below $(\Xi, e_v, e_R, e_J)$ to order $L$ in $C^0$ if $(\th, u^l, c_A, R^l)$ solve the system \eqref{eq:compScalStress} and satisfy the bounds 
\begin{align}
|| \nab^k u ||_{C^0} + \co{ \nab^k \th } &\leq \Xi^k e_v^{1/2} &k = 1, \ldots, L \label{bound:nabkth} \\
\| \nab^k (\pr_t + u \cdot \nab) u \|_{C^0} &\leq \Xi^{k+1} e_v &k = 0, \ldots, L-1 \label{bound:nabkdtu} \\
|| \nab^k c_A ||_{C^0} &\leq \Xi^k e_R &k = 0, \ldots, L \label{bound:nabkCA} \\
|| \nab^k (\pr_t + u \cdot \nab) c_A ||_{C^0} &\leq \Xi^{k+1} e_v^{1/2} e_R  &k = 0, \ldots, L - 1 \label{bound:dtnabkCA} \\
|| \nab^k R_J ||_{C^0} &\leq \Xi^k e_J &k = 0, \ldots, L \label{bound:nabkR} \\
|| \nab^k (\pr_t + u \cdot \nab) R_J ||_{C^0} &\leq \Xi^{k+1} e_v^{1/2} e_J &k = 0, \ldots, L - 1 \label{bound:dtnabkR}
\end{align}
Here $\nab$ refers only to derivatives in the spatial variables.
\end{defn}
Note that we assume bounds \eqref{bound:nabkth}-\eqref{bound:nabkdtu} on the drift velocity $u^l$ which do not in general follow from the corresponding bounds on $(\th, c_A, R^l)$ and the transport equation \eqref{eq:compScalStress}.  We assume these bounds on $u^l$ in order to to avoid logarithmic losses in our estimates which would arise otherwise from the lack of $C^0$ boundedness of the operator $u^l = T^l \th$ defining the velocity.

We now state the Main Lemma of the paper, which summarizes the result of one step of the convex integration procedure.  The statement of this lemma involves two constants: $K_0 \geq 1$ (specified in Line \eqref{eq:whichK0weneed} of the construction) and $K_1 \geq 1$ (determined in Line \eqref{eq:decomposeRepComponents} of the construction, see also Section~\ref{sec:coBounds}).  These constants $K_0$ and $K_1$ depend only on the operator $T^l$ in the statement of the Main Theorem.

\begin{lem}[The Main Lemma]\label{lem:mainLemma}
Suppose that $L \geq 2$ and let $K, M \geq 4$ be non-negative numbers such that $K \geq K_0$.  
There is a constant $C_0$ depending only on $L$, $K$, $M$ and the operator $T^l$ such that the following holds:

Let $(\th,u^l, c_A, R_J^l)$ be any solution of the compound scalar-stress system whose compound frequency and energy levels are below $(\Xi, e_v, e_R, e_J)$
 to order $L$ in $C^0$, and let $I \subseteq \R$ be a nonempty closed interval such that 
\ali{
 \supp R_J \cup \supp c_A &\subseteq I \times \T^2  \label{sub:suppAssumption}
}

Define the time-scale ${\hat \tau} = \Xi^{-1} e_v^{-1/2}$, and let
\[ e(t) : \R \to \R_{\geq 0} \]
be any non-negative function for which the lower bound
\begin{align}
 e(t) \geq K e_R \quad \quad \mbox{ for all } t \in I \pm {\hat \tau}  \label{eq:lowBoundEoftx}
\end{align}
is satisfied in a ${\hat \tau}$-neighborhood of the interval $I$, and whose square root satisfies the estimates
\begin{align}
|| \fr{d^r}{dt^r} e^{1/2} ||_{C^0} &\leq M (\Xi e_v^{1/2})^r e_R^{1/2} ,\qquad 0 \leq r \leq 2 \label{ineq:goodEnergy}
\end{align}

Now let $N$ be any positive number obeying the bound
\begin{align} 
 N &\geq \left(\fr{e_v}{e_R} \right)^{3/2}\label{eq:conditionsOnN2}
\end{align}
and define the dimensionless parameter ${\bf b} = \left(\fr{e_v^{1/2}}{e_R^{1/2}N} \right)^{1/2}$.

Then there exists a solution $(\th_1,u^l, c_B, R_1^l)$ of the form $\th_1 = \th + \Th$, $u_1 = u + U$ to the Compound Scalar-Stress Equation \eqref{eq:compScalStress} with vector $B^l$ whose frequency and energy levels are below
\begin{align}
\begin{split}
 (\Xi', e_{v}', e_{R}', e_J') &= \left( C N \Xi, e_R, K_1 e_J, \fr{e_v^{1/4} e_R^{3/4}}{N^{1/2}}   \right) \\
 &= \left( C N \Xi, e_R, K_1 e_J, \left(\fr{e_v^{1/2}}{e_R^{1/2}N} \right)^{1/2} e_R   \right) \\
&= \left( C N \Xi, e_R, K_1 e_J, {\bf b}^{-1} \fr{e_v^{1/2} e_R^{1/2}}{N}\right) 
\end{split} \label{eq:theNewEnergyLevel}
\end{align}
to order $L$ in $C^0$, and whose stress fields $R_1$ and $c_B$ are supported in
\begin{align}
\supp c_B \cup \supp R_1 \subseteq \supp e \times \T^2
\end{align}
The correction $\Th = \th_1 - \th$ is of the form $\Th = \nab \cdot W$.  This correction and the correction to the velocity field $U^l = T^l[\Th]$ can be guaranteed to obey the bounds
\begin{align}
||\Th||_{C^0} + \|U^l \|_{C^0} +  &\leq C e_R^{1/2} \label{eq:Vco} \\
\|\nab \Th \|_{C^0} + \|\nab U \|_{C^0} &\leq C N \Xi e_R^{1/2} \label{eq:nabVco} \\
\|(\pr_t + u^j \pr_j) \Th \|_{C^0} + \|(\pr_t + u^j \pr_j) U \|_{C^0} &\leq C {\bf b}^{-1} \Xi e_v^{1/2} e_R^{1/2} \label{eq:matVco} \\
||W||_{C^0} &\leq C \Xi^{-1} N^{-1} e_R^{1/2} \label{eq:Wco}\\
\co{ \nab W } &\leq C e_R^{1/2} \label{eq:nabWco} \\
||(\pr_t + u^j \pr_j) W ||_{C^0} &\leq C {\bf b}^{-1} N^{-1} e_v^{1/2} e_R^{1/2} \label{eq:matWco}
\end{align}
The energy increment from the correction is prescribed up to errors bounded by
\begin{align}
\left| \int_{\T^2} \fr{|\Th|^2}{2}(t,x) dx - \int_{\T^2} e(t) dx \right| &\leq \fr{1}{2} \int_{\T^2} e(t) dx + \fr{e_R}{N}  \label{eq:energyPrescribed}
\end{align}
and the incremental energy variation satisfies an estimate
\ali{
\left| \fr{d}{dt} \int_{\T^2} |\Th|^2(t,x) dx \right| \leq C {\bf b}^{-1} \Xi e_v^{1/2} e_R \label{eq:dtenergyPrescribed}
}
uniformly in time.  Finally, the space-time support of the correction $\Th$ is contained in $\supp e \times \T^2$.
\end{lem}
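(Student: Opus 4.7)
The plan is to execute one step of the convex integration scheme by constructing a correction $\Th = \sum_I \Th_I$, with each $\Th_I = e^{i \la \xi_I}(\th_I + \de \th_I)$ and $\la \sim N \Xi$, chosen so as to simultaneously eliminate the $c_A A^l$ component of the old stress and raise the energy by the prescribed amount $e(t)$, thereby producing $c_B B^l$ as the new main stress component. First I would mollify the given data at length scale $\ell \sim (N \Xi)^{-1}$, and construct phase functions $\xi_I$ by solving $(\pr_t + u^l \pr_l)\xi_I = 0$ on time intervals of length $\hat\tau = \Xi^{-1} e_v^{-1/2}$ via a partition of unity in time. Following the ``ten trick'' of Section~\ref{sec:basicTechOutline}, the initial gradients $\nab \xi_I$ at the center of each interval are drawn from a finite symmetric set containing both $\pm \xi^{(1)}, \pm \xi^{(2)}$ and $\pm 10\, \xi^{(1)}, \pm 10\, \xi^{(2)}$, paired so that every interaction between distinct non-conjugate waves has combined phase gradient bounded away from zero.

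Next I would determine the amplitudes $\th_I$ by algebraically solving the schematic stress equation
\[
\fr{1}{2} \sum_I |\th_I|^2 \bigl( m^l(\nab \xi_I) + \overline{m^l(\nab \xi_I)} \bigr) \;=\; e(t)(A^l + B^l) - (c_A A^l + R_J^l)_\ep,
\]
decomposed in the basis $\{A^l, B^l\}$ of \eqref{eq:AlBlare}. The lower bound \eqref{eq:lowBoundEoftx} with $K \geq K_0$ large guarantees the right-hand side is a pointwise positive linear combination of $A^l$ and $B^l$, so $|\th_I|^2$ can be chosen as smooth nonnegative functions whose square roots obey $\|\th_I\|_{C^0} \lesssim e_R^{1/2}$, matching \eqref{eq:Vco}; the new coefficient $c_B$ is read off as the $B^l$-component of this right-hand side. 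Setting $\Th_I = e^{i\la \xi_I}(\th_I + \de \th_I)$, where $\de \th_I$ is chosen so that each $\Th_I$ has vanishing mean and the total correction is expressible as $\Th = \nab \cdot W$, and applying the Microlocal Lemma to compute $U^l_I = T^l[\Th_I] = e^{i\la \xi_I}(m^l(\nab \xi_I)\th_I + \de u_I^l)$ with an explicit remainder, the equation for $\th_1 = \th + \Th$ reduces to $\pr_l(c_B B^l + R_1^l)$, where $R_1^l = R_T^l + R_H^l + R_S^l + R_M^l$ splits into the transport term $(\pr_t + u \cdot \nab)\Th$, high-frequency interference between distinct wave pairs, the low-frequency mollification defect, and the microlocal remainder. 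Each piece is put into divergence form by inverting divergence on a frequency-localized expression, which in this setting reduces to a scaling bound yielding a gain of $\la^{-1}$.

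The main obstacle will be proving the sharp bound $e_J' \sim {\bf b}^{-1} e_v^{1/2} e_R^{1/2}/N$ on each component of $R_1^l$ together with the corresponding advective derivative estimate \eqref{bound:dtnabkR}. The critical cancellation is that self-interactions $U_I^l \Th_{\bar I} + U_{\bar I}^l \Th_I$ produce a high-frequency contribution proportional to $m^l(\nab \xi_I) \pr_l \xi_I$, which vanishes identically by the incompressibility condition $\xi \cdot m(\xi) = 0$; this removes the otherwise unsalvageable $O(\la)$ contribution. The remaining high-frequency interference terms between distinct non-conjugate waves are genuinely high-frequency thanks to the ten trick, so the gain of $\la^{-1} \sim (N \Xi)^{-1}$ from inverting divergence produces exactly the desired order. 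For the transport term, the bound \eqref{eq:matVco} gives $\|(\pr_t + u \cdot \nab)\Th\|_{C^0} \lesssim {\bf b}^{-1} \Xi e_v^{1/2} e_R^{1/2}$, which after division by $\la$ yields exactly $e_J'$. The energy prescription \eqref{eq:energyPrescribed}--\eqref{eq:dtenergyPrescribed} follows from the decomposition $|\Th|^2 = \sum_I |\th_I|^2 + (\text{oscillatory cross-terms})$, since the diagonal terms integrate to $\int e(t)\, dx$ by construction of the amplitudes while the off-diagonal terms oscillate at frequency $\la$ and contribute $O(\la^{-1}) = O(e_R/N)$. Finally, the derivative bounds \eqref{bound:nabkth}--\eqref{bound:nabkdtu} on the new drift velocity $u_1^l$ are not inferred from $C^0$-boundedness of $T^l$, which fails; instead they are read directly from the Microlocal Lemma applied wave-by-wave, exploiting the sharp frequency localization of each $\Th_I$.
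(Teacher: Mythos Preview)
Your proposal contains a fundamental misunderstanding of the architecture of the step, and would fail at the high-frequency interference terms.

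You plan to use waves whose initial phase gradients are drawn from \emph{both} families $\{\pm \xi^{(1)}, \pm 10\xi^{(1)}\}$ and $\{\pm \xi^{(2)}, \pm 10\xi^{(2)}\}$, solving the amplitude equation in the full basis $\{A^l, B^l\}$ at once. But the paper explains in Section~\ref{sec:highFreqInterference} that this is exactly what \emph{cannot} be done. The high-frequency interaction $U_J^l \pr_l \Th_I$ has leading-order size $\la\, e_R \cdot m^l(\nab\hat\xi_J)\pr_l\hat\xi_I$, and the only reason this is small is that $m(\xi)\cdot\xi = 0$ forces $m^l(\nab\hat\xi_J)\pr_l\hat\xi_I = 0$ when $\nab\hat\xi_I$ and $\nab\hat\xi_J$ are \emph{parallel}. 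If one wave has $\nab\hat\xi_I \parallel \xi^{(1)}$ and another has $\nab\hat\xi_J \parallel \xi^{(2)}$, then $m^l(\xi^{(2)})\pr_l\xi^{(1)}$ is generically nonzero, the leading term survives, and after inverting the divergence (gaining only $\la^{-1}$) you are left with an error of size $e_R$ --- no improvement at all. Your sentence ``the gain of $\la^{-1}$ from inverting divergence produces exactly the desired order'' is wrong: the gain of $\la^{-1}$ by itself only gets you back to $e_R$; the actual improvement to $e_J' = b\, e_R$ comes from the near-cancellation $m^l(\nab\xi_J)\pr_l\xi_I \approx 0$, which requires all waves to oscillate in a single direction.

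The paper's scheme therefore uses \emph{only} waves parallel to $\xi^{(1)}$ in this step. Consequently only the $A^l$ component of the old stress can be eliminated; the amplitude equation is $\sum_I |\th_I|^2 A^l = (e(t) + \tilde c_A + c_J)A^l$, where $R_\ep^l = c_J A^l + c_B B^l$ is the decomposition of the mollified $R_J$. The $B^l$ component $c_B$ (of size $e_J$, \emph{not} $e_R$) is simply left behind and becomes the main stress for the next step --- this is why the new $e_R' = K_1 e_J$, and is the entire purpose of the compound scalar-stress framework. Your description of where $c_B$ comes from is incorrect.

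A second gap: you take the time cutoffs at the natural scale $\hat\tau = \Xi^{-1}e_v^{-1/2}$, but the paper uses the sharper $\tau = B_\la^{-1/2}\, b\, \hat\tau$ (see \eqref{eq:tau:def}). With your choice, the phase gradients drift by $|\nab\xi_I - \nab\hat\xi_I| \lesssim \tau \Xi e_v^{1/2} = O(1)$, so the residual interference terms \eqref{eq:MIJfirst}--\eqref{eq:MIJfirst2} and the stress term $R_{SI,1}$ in \eqref{eq:newLowFreqTerm} are only $O(e_R)$, again failing to reach $e_J' = b\, e_R$. The choice of $\tau$ is made precisely by balancing the transport term $\la^{-1}\tau^{-1}e_R^{1/2}$ against the phase-drift terms $e_R\cdot \tau\Xi e_v^{1/2}$, and this optimization is what produces the factor $b = (e_v^{1/2}/(e_R^{1/2}N))^{1/2}$ in the lemma.
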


\subsection{Remarks about the Main Lemma}
The overall structure of Lemma~\ref{lem:mainLemma} is based on the Main Lemma of \cite[Lemma 10.1]{isett}.  The most important difference in our Lemma lies in the difference in the definition of the compound frequency energy levels.  The bounds implicit in \eqref{eq:theNewEnergyLevel}, which state the rate at which we are able to reduce the stress error, are the most essential point the main lemma and dictate the regularity of the solutions we obtain.  Another noticeable difference between Lemma~\ref{lem:mainLemma} compared to the Lemmas \cite[Lemma 10.1]{isett} and \cite[Lemma 4.1]{isettOhAng} is that the estimate \eqref{eq:energyPrescribed} gives us worse control over the increment of energy.  In those Lemmas, the term $\fr{1}{2} \int_{\T^2} e(t) dx$ is not present, and the error in prescribing the energy increment is of size $O(N^{-1})$.  

This weaker estimate on the energy increment is still sufficient for the applications considered in those papers.  In \cite{isett} and \cite{isettOhAng}, the same estimate is applied to prove the nontriviality of solutions, by proving that the energy strictly increases during the iteration at each fixed time slice on which the corrections are nontrivial.  The same statement can be obtained here, although in our case the nontriviality of solutions follows already from the weak-* approximation statement in Theorem~\ref{thm:mainThm}.  In \cite{isettOhAng}, it was shown that a localized version of the estimate \eqref{eq:energyPrescribed} can be combined with the bounds \eqref{eq:Wco}-\eqref{eq:nabWco} to prove that that the construction necessarily results in solutions which fail to have any kind of improved $C_x^{1/5 + \ep}(B)$ local regularity (or even local $W^{1/5+\ep, 1}$ regularity) on every open ball $B$ and every time slice contained in the support of the iteration (see \cite[Theorem 1.2]{isettOhAng} for a precise statement).  This lack of higher regularity is an automatic consequence of the construction, as the same proof shows the failure of local regularity above $C_x^{1/5 - \ep}$ regularity for the earlier constructions of $C_{t,x}^{1/5 - \ep}$ solutions in \cite{isett, deLSzeHoldCts}.  The same result applies in our setting by the same proof, using the estimates \eqref{eq:Wco}-\eqref{eq:nabWco} and the localized version of \eqref{eq:energyPrescribed}.  Namely, our solutions in dimension $2$ fail to belong to $C_x^{1/9+\ep}(B)$ on every open ball $B$ and every time slice contained in the support of the iteration, and in dimension $d$ fail to have any local regularity $C_x^{1/(1 + 4 d)+\ep}(B)$ in a similar way.

\subsection{Modifications for the higher dimensional case} \label{sec:explain:higherD}

In this subsection we make some remarks about how to modify our proof to apply in higher dimensions.

In order to prove Theorem~\ref{thm:nDims} regarding the case of higher dimensions, the relevant Main Lemma has a slightly different formulation, as one must modify the definitions of the compound scalar stress equation and the compound frequency energy levels.  In the case of dimension $d$, we assume given a linearly independent set of vectors $A_{(1)}, \ldots, A_{(d)}$ in the image of the even part of the multiplier.  A typical solution to the Compound Scalar Stress equation will then be a solution to the equation
\ali{
\begin{split}
\pr_t \th + \pr_l(\th u^l) &= \pr_l( c_{A,(1)} A_{(1)}^l + \ldots + c_{A,(d-1)} A_{(d-1)}^l + R_J^l)  \\
u^l &= T^l(\th)
\end{split} \label{eq:higherDCompoundEqn} 
}
A single step of the iteration will remove the $A_{(1)}$ component of the error, giving a solution $\th_1$ and a new error of the form
\ali{
\pr_t \th_1 + \pr_l(\th_1 u_1^l) &= \pr_l( c_{A,(2)} A_{(2)}^l + \ldots + c_{A,(d-1)} A_{(d-1)}^l + c_{A,(d)} A_{(d)}^l + R_{J,1}^l)  \\
u_1^l &= T^l(\th_1) 
}
At the step above (or even earlier when writing \eqref{eq:higherDCompoundEqn}) we can absorb the $A_{(2)}, \ldots, A_{(d-1)}$ components of $R_{J,1}$ into the other terms.  (To say it in a slightly different way, one can assume from the start in writing \eqref{eq:higherDCompoundEqn} that $R_J$ is a multiple of $A_{(d)}$ by absorbing the other components of $R_J$ into the other terms.)

The Definition~\ref{def:freqEnDef} of compound frequency energy levels now should include $d + 1$ different energy levels $e_v \geq e_{R,[1]} \geq \ldots \geq e_{R,[d-1]} \geq e_J$.  The Main Lemma then takes as an input a compound scalar stress field with given frequency energy levels and outputs another scalar stress field with compound frequency energy levels
\ali{
(\Xi, e_v, e_{R,[1]}, \ldots, e_{R,[d-1]}, e_J) &\mapsto (C N \Xi, e_{R,[1]}, K_1 e_{R,[2]}, \ldots, K_1 e_{R,[d-1]}, K_1 e_J, e_J') \\
e_J' &= \left(\fr{e_v^{1/2}}{e_{R,[1]}^{1/2}N} \right)^{1/2} e_{R,[1]}
}
as in \eqref{eq:theNewEnergyLevel}.  All the bounds of the Main Lemma then hold with $e_R$ replaced by $e_{R,[1]}$, since we are eliminating the first and largest component of the error, and leaving the other terms for the next stages.  

The proof of the Main Lemma is then performed similarly as below, but naturally involves more terms and notation.  The Main Lemma is applied to prove Theorem~\ref{thm:nDims} in a similar way as is done in Section~\ref{sec:mainLemWorks} below, where one maintains a constant ratio of the consecutive energy levels with size bounded by $\fr{e_v}{e_{R,[1]}} , \fr{e_{R,[i]}}{e_{R,[i+1]}} \leq \fr{K_1}{Z}$.  The difference in the iteration then is the choice of $N_{(k)} \sim Z^{(4 d + 1)/2}$ instead of \eqref{eq:Nkchoice} at later stages $k$.  Comparing the growth of frequencies $\Xi_{(k)} \sim Z^{(4 d + 1)k/2}$ to the decay in energy levels $e_{R,[1],(k)}^{1/2} \sim Z^{-k/2}$ as in \eqref{eq:correctCtxaBd}, we obtain H\"{o}lder regularity up to $\fr{1}{(4d + 1)}$ as stated in Theorem~\ref{thm:nDims}.

$ $

In the next Sections~\ref{sec:microlocalLem}-\ref{sec:boundNewStress}, we give the proof of the Main Lemma.  In the following Sections~\ref{sec:mainLemWorks}-\ref{sec:arbitNonuniqueProof}, we then explain how the Main Lemma can be used to deduce Theorems~\ref{thm:mainThm}-\ref{thm:arbitraryNonuniqueness}.






\section{The Microlocal Lemma} \label{sec:microlocalLem}
The following Lemma will be used heavily in the construction in order to control the output of a convolution operator applied to a highly oscillatory input.  The Lemma allows us to show that, to leading order, a convolution operator simply behaves like a multiplication operator when it is applied to a high frequency input with a nonlinear phase function.

In all of our applications, the kernel $K(h)$ below will be a Schwartz function essentially supported on length scales of order $|h| \sim \la^{-1}$ for large $\la$.  We normalize the Fourier transform of a function $K : \R^2 \to \C$ to be
\[ \hat{K}(\xi) = \int_{\R^2} e^{- i \xi \cdot h} K(h) dh \]
\begin{lem}[Microlocal Lemma]\label{lem:microlocal}
Suppose that 
\[ T[\Th](x) = \int_{\R^2} \Th(x - h) K(h) dh \]
is a convolution operator acting on functions $\Th : \T^2 \to \C$, with a kernel $K : \R^2 \to \C$ in the Schwartz class.  Let $\xi : \T^2 \to \R$ and $\th : \T^2 \to \C$ be smooth, periodic functions and $\la \in \Z$ be an integer.  Then for any input of the form
\[ \Th = e^{i \la \xi(x)} \th(x) \]
we have the formula
\ali{
T[\Th](x) &= e^{i \la \xi(x)} \left( \th(x) {\hat K}(\la \nab \xi(x))  + \de[T\Th](x) \right) \label{eq:microLocal}
}
where the error in the amplitude term has the explicit form
\ali{
\label{eq:deThExpress}
\begin{split}
\de[T\Th](x) &= \int_0^1 dr \fr{d}{dr} \int_{\R^2} e^{- i \la \nab \xi(x) \cdot h} e^{i Z(r,x, h)} \th(x - rh) K(h) dh \\
Z(r,x,h) &= r \la \int_0^1 h^a h^b \pr_a \pr_b \xi(x - s h) (1-s) ds 
\end{split}
}
\end{lem}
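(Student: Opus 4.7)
The plan is to derive the formula as a pointwise identity by combining Taylor's theorem in the phase $\xi$ with a one-parameter interpolation handled by the fundamental theorem of calculus. First I would substitute $\Th(y)=e^{i\la\xi(y)}\th(y)$ into the convolution and factor $e^{i\la\xi(x)}$ out of the integrand to get
\[
T[\Th](x) = e^{i\la\xi(x)} \int_{\R^2} e^{i\la[\xi(x-h)-\xi(x)]}\,\th(x-h)\,K(h)\,dh.
\]
I would then apply Taylor's formula with integral remainder to $\xi$, which yields
\[
\xi(x-h)-\xi(x) = -\nab\xi(x)\cdot h + \int_0^1 (1-s)\,h^a h^b\,\pr_a\pr_b\xi(x-sh)\,ds,
\]
so that $\la[\xi(x-h)-\xi(x)] = -\la\,\nab\xi(x)\cdot h + Z(1,x,h)$, with $Z$ exactly as in \eqref{eq:deThExpress}.

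Next, to peel off the multiplier contribution, I would introduce the interpolation $G(r):=e^{iZ(r,x,h)}\,\th(x-rh)$ for $r\in[0,1]$. Since $Z(0,x,h)=0$, one has $G(0)=\th(x)$ and $G(1)=e^{iZ(1,x,h)}\,\th(x-h)$, so the fundamental theorem of calculus lets me write the bracketed factor in the integrand as $\th(x)+\int_0^1 \fr{d}{dr}G(r)\,dr$. Substituting back and using Fubini to commute the $r$- and $h$-integrals, the contribution from the $\th(x)$ piece becomes
\[
\th(x)\int_{\R^2} e^{-i\la\,\nab\xi(x)\cdot h}\,K(h)\,dh = \th(x)\,\hat K(\la\,\nab\xi(x)),
\]
which is the principal symbol term in \eqref{eq:microLocal}, while the contribution from the $\int_0^1 G'(r)\,dr$ piece is precisely the claimed expression for $\de[T\Th](x)$ in \eqref{eq:deThExpress}.

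The result is essentially a bookkeeping exercise: it is a pointwise identity between absolutely convergent integrals (using that $K$ is Schwartz and $\th,\xi$ are smooth and periodic, so all the finite-order remainders decay rapidly in $h$), so there is no real analytic obstacle and no need for stationary phase or oscillatory integral machinery at this stage. The main subtle point is not in proving the identity but in motivating its particular form: when one later differentiates $G(r)$ in $r$, factors of $h^a$ fall from both the Taylor remainder buried in $Z$ and the shift $\th(x-rh)$, and these extra powers of $h$ combine with the Schwartz decay of $K$ on the scale $|h|\sim\la^{-1}$ to make $\de[T\Th]$ a genuinely lower-order perturbation of the multiplication operator $\th\mapsto \hat K(\la\,\nab\xi)\,\th$. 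This is the heuristic that justifies calling \eqref{eq:microLocal} a microlocal identity and is what the subsequent sections of the construction will exploit.
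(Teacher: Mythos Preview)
Your proposal is correct and follows essentially the same approach as the paper: factor out $e^{i\la\xi(x)}$, Taylor expand the phase $\xi(x-h)-\xi(x)$, and use the fundamental theorem of calculus in the interpolation parameter $r$ to separate the principal term $\th(x)\hat K(\la\nab\xi(x))$ from the explicit remainder. The paper presents this slightly more tersely (simply writing down the $\int_0^1 dr\,\fr{d}{dr}$ formula and recognizing the $r=0$ endpoint as the Fourier transform), but the argument is identical.
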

\begin{proof}
Observe that
\ali{
e^{- i \la \xi(x)} T[\Th](x) &= \int_{\R^2} e^{i \la(\xi(x-h) - \xi(x))} \th(x-h) K(h) dh \label{eq:conjugatedTOp}
}
By Taylor expanding, we express
\ali{
 \xi(x-h) - \xi(x) &= - \nab \xi(x) \cdot h + \int_0^1 h^a h^b  \pr_a \pr_b \xi(x - s h) (1-s) ds \label{eq:taylorXiExpand}
}
In our applications, the kernel $K$ is localized to small values of $|h| \sim \la^{-1}$ for large $\la$, so we view the second term in \eqref{eq:taylorXiExpand} as a small error.  Similarly, we think of $\th(x - h)$ as a perturbation of $\th(x)$, which motivates us to express the right hand side of \eqref{eq:conjugatedTOp} as
\ali{
e^{- i \la \xi(x)} T[\Th](x) &= \int_{\R^2} e^{- i \la \nab \xi(x) \cdot h} \th(x) K(h) dh + \de[T \Th](x), \label{eq:mainAndDeTermsTTh}
}
where $\de[T \Th](x)$ is expressed in \eqref{eq:deThExpress}.  The proof concludes by recognizing that $\th(x)$ can be factored out of the integral in \eqref{eq:mainAndDeTermsTTh}, which gives formula \eqref{eq:microLocal}.
\end{proof}

\begin{rem} We remark that the same method applied here to prove Lemma \ref{lem:microlocal} can also be iterated to obtain a higher order expansion of $T[\Th](x)$ involving only the functions $\th(x)$, $\nab \xi(x)$ and their derivatives evaluated at the point $x$
\ali{
 \de[T\th](x) = -i ~\pr_a \th(x) \pr^a \hat{K}(\la \nab \xi(x)) - \fr{1}{2} i \la~ \th(x) \pr_a \pr_b \xi(x)  \pr^a \pr^b \hat{K} (\la \nab \xi(x)) + \ldots  \label{eq:higherOrderMicrolocal}
}
To obtain this further expansion, one modifies the function $Z$ defined in \eqref{eq:deThExpress} to have an additional factor of $r$ in the argument of the phase function
\[ Z(r,x,h) = r \la \int_0^1 h^a h^b \pr_a \pr_b \xi(x - r s h) (1-s) ds \]
The expansion \eqref{eq:higherOrderMicrolocal} is then obtained by Taylor expansion in the variable $r$ via integration by parts.  We do not take this approach here because it does not improve our estimates, and results in some more complicated formulas.
\end{rem}

\section{The Construction}\label{sec:construction}
We now give a detailed description of the construction.  We start by obtaining a complete list of the error terms.

Suppose that we are in the setting of Lemma \ref{lem:mainLemma}.  Thus, we have a solution $(\th, u, c_A, R_J)$ to the compound scalar-stress equation with vector $A^l = m^l(\xi^{(1)}) + m^l(-\xi^{(1)})$ as in \eqref{eq:ourAlandBl}
\ali{
\left \{
\begin{aligned}
\pr_t \th + \pr_l(\th u^l) &= \pr_l( c_A A^l + R_J^l)  \\
u^l &= T^l(\th) 
\end{aligned}
\right.
}
whose frequency-energy levels are below $(\Xi, e_v, e_R, e_J)$.  After adding a correction $\Th$ to the scalar field, the corrected scalar $\th_1 = \th + \Th$ and drift velocity $u_1^l = u^l + U^l$, $U^l = T^l[\Th]$ satisfy the system
\ali{
\pr_t \th_1 + \pr_l(\th_1 u_1^l) &= \pr_t \Th + \pr_l( u^l \Th) + \pr_l(\th U^l) + \pr_l( \Th U^l + c_A A^l + R_J^l) \label{eq:correctedSystem1}
}
As a preliminary step, it is necessary to define suitable regularizations $(\th_\ep, u_\ep, \tilde{c}_A, R_\ep)$ of the given $(\th, u, c_A, R_J)$.  The purpose of these regularizations is to ensure that only the ``low frequency parts'' of the given solutions $(\th, u, c_A, R_J)$ will influence the building blocks of the construction.  These mollifications give rise to an error term
\ali{
R_M^l &= (u^l - u_\ep^l) \Th + (\th - \th_\ep)U^l + (c_A - \tilde{c}_A) A^l + (R_J^l - R_\ep^l) \label{eq:mollifyRappears}
}
Our goal is to design a correction $\Th$ for the scalar field $\th$ so that the corrected scalar $\th_1 = \th + \Th$ and drift velocity $u_1^l = u^l + U^l$ satisfy the compound scalar-stress equation with vector $B^l = m^l(\xi^{(2)}) + m^l(-\xi^{(2)})$ as in \eqref{eq:ourAlandBl}
\ali{
\label{eq:scalarStressB}
\left \{
\begin{aligned}
\pr_t \th_1 + \pr_l(\th_1 u_1^l) &= \pr_l( c_B B^l + R_1^l)  \\
u_1^l &= T^l[\th_1] 
\end{aligned}
\right.
}
whose compound frequency energy levels are bounded as in Lemma \ref{lem:mainLemma}.

\subsection{The shape of the corrections}\label{sec:shapeOfCorrections}
Our correction is a sum of individual waves
\ali{
\Th &= \sum_I \Th_I \\
\Th_I &= e^{i \la \xi_I}(\th_I + \de \th_I) \label{eq:aThILooksLike}
}
where we are free to specify the amplitudes $\th_I$ and the phase function $\xi_I$.  The parameter $\la$ is a large frequency parameter of the form
\ali{
\la &= B_\la N \Xi \label{eq:lambdaDef}
}
where $B_\la$ is a very large constant associated to $\la$ which is chosen at the end of the argument.  (For technical reasons, we will require that $\la \in \Z_+$ is a positive integer, so $B_\la$ will really have some dependence on $N \Xi$, but will nonetheless be bounded, and should be thought of as a constant.)  The term $\de \th_I$ in \eqref{eq:aThILooksLike} is a small correction term which is present to ensure that the wave $\Th_I$ has compact support in frequency space.  We will specify $\de \th_I$ later, but it is important to remark that 
\[ \co{\de \th_I} \to 0, \tx{ as } \la \to \infty \]
Each wave $\Th_I$ has a conjugate wave $\Th_{\bar{I}} = \overline{\Th}_I$ with an opposite phase function $\xi_{\bar{I}} = - \xi_I$ and amplitude $\th_{\bar{I}} = \bar{\th}_I$ so that the overall correction is real-valued.  We will choose the amplitudes $\th_I = \bar{\th}_I$ to be real-valued as well.

The index $I$ for the wave $\Th_I$ consists of two parts $I = (k,f) \in \Z \times \{ \pm \}$.  The discrete index $k \in \Z$ specifies the support of the wave $\Th_I = \Th_{(k,f)}$ in time.  Specifically, the support of $\Th_{(k,f)}$ will be contained in the time interval $[(k - \fr{2}{3}) \tau, (k + \fr{2}{3})\tau]$ where $\tau$ is a time scale parameter that will be chosen during the iteration.  The index $f \in \{ \pm \}$ is a sign which specifies the direction of oscillation of the wave $\Th_{(k,f)}$.

The phase functions $\xi_I$ are solutions to the transport equation
\ali{
\label{eq:transportEqn} 
\begin{split}
(\pr_t + u_\ep^l \pr_l) \xi_I &= 0 \\
\xi_I(t(I), x) &= \hat{\xi}_I(x)
\end{split}
}
The amplitudes $\th_I$ will be supported on a small time interval during which the phase functions remain close to their initial data.  The initial data $\hat{\xi}_I$ for the phase function $\xi_I = \xi_{(k,f)}$ is chosen at the time $t(I) = k \tau$ depending on the index $I = (k,f)$
\ali{
\hat{\xi}_I(x) = \hat{\xi}_{(k, \pm)}(x) &= \pm 10^{[k]} \xi^{(1)} \cdot x 
}
where $[k] \in \{ 0, 1 \}$ is equal to $0$ when $k$ is even and is equal to $1$ when $k$ is odd.  In particular, we have
\[ \nab \hat{\xi}_I = \pm 10^{[k]} \xi^{(1)}, \qquad [k] \in \{ 0, 1 \} \]
Our individual waves are localized in frequency and take the form
\ali{
\Th_I &= P_{\approx \la}^I [ e^{i \la \xi_I} \th_I ] \label{eq:ThIisLike}
}
The operators $P_{\approx \la}^I$ in \eqref{eq:ThIisLike} restrict to frequencies of order $\la$ in a neighborhood of $\la \nab \hat{\xi}_I$.  To be explicit, let ${\hat \eta}_{\approx 1}(\xi)$ be a bump function supported on frequencies 
\[ {\hat \eta}_{\approx 1}(\xi) \in C_c^\infty\left(B_{|\xi^{(1)}|/2}(\xi^{(1)}) \right) \]
which has the property that 
\[ {\hat \eta}_{\approx 1}(\xi) = 1, \qquad ~\tx{if } |\xi - \xi^{(1)}| \leq \fr{1}{4} |\xi^{(1)}| \] 
We then define a frequency cutoff supported on high frequencies of order $\la$ by rescaling and reflection
\[ {\hat \eta}^{I}_{\approx \la}(\xi) = {\hat \eta}_{\approx 1}(\pm 10^{-[k]} \la^{-1} \xi). \] 
Then $P_{\approx \la}^{I}$ is given explicitly by a Fourier multiplier 
\[ \widehat{P_{\approx \la}^{I} F}(\xi) = {\hat \eta}_{\approx \la}^I(\xi) {\hat F}(\xi). \]   
Including this ``projection operator'' $P_{\approx \la}^{I}$ guarantees that all the corrections \eqref{eq:ThIisLike} have frequency support in the ball $| \xi - ( \la \nab \hat{\xi}_I ) | \leq \la \fr{|\nab \hat{\xi}_I|}{2} $, and in particular have integral $0$.  Having compact support in frequency space will allow us to easily control the resulting increment to the velocity field, which is obtained by applying another Fourier multiplier.

By the Microlocal Lemma \ref{lem:microlocal}, it is possible to write the wave \eqref{eq:ThIisLike} in the form \eqref{eq:aThILooksLike} with an explicit remainder $\de \th_I$, since we have
\ali{
\Th_I = P_{\approx \la}^I [ e^{i \la \xi_I} \th_I ] &= e^{i \la \xi_I}( \th_I {\hat \eta}_{\approx \la}^I(\la \nab \xi_I) + \de \th_I) \notag \\
&= e^{i \la \xi_I}( \th_I + \de \th_I)
}
provided that the phase gradient is sufficiently close to its initial value
\ali{
| \nab \xi_I - \nab \hat{\xi}_I | &\unlhd \fr{|\nab \hat{\xi}_I|}{4} \label{eq:keepPhaseGradientClose}
}
We will verify that inequality \eqref{eq:keepPhaseGradientClose} is satisfied when the parameter lifespan parameter $\tau$ is chosen.

Applying the Microlocal Lemma \ref{lem:microlocal} again, we can also calculate the resulting correction to the drift velocity.
\ali{
U_I^l &:= T^l \Th_I \\
T^l P_{\approx \la}^I [ e^{i \la \xi_I} \th_I ] &= e^{i \la \xi_I}(\th_I {\hat K}^l(\la \nab \xi_I) {\hat \eta}_{\approx \la}^I(\la \nab \xi_I) + \de u_I^l ) \\ 
U_I^l &= e^{i \la \xi_I}( \th_I m^l(\nab \xi_I) + \de u_I^l )
}
Therefore, once we have verified \eqref{eq:keepPhaseGradientClose}, we have
\ali{
U_I^l &= e^{i \la \xi_I}( u_I^l + \de u_I^l) \\
u_I^l &= \th_I m^l(\nab \xi_I) \label{eq:uIexpression}
}
with an explicit error term $\de u_I^l$ given by Lemma \ref{lem:microlocal}.

\subsection{Choosing the amplitudes }\label{sec:chooseAmplitudes}
According to Section \ref{sec:shapeOfCorrections}, we can now decompose the remaining error terms in Equation \eqref{eq:correctedSystem1} as follows
\ali{
\pr_t \th_1 + \pr_l(\th_1 u_1^l) &= \pr_t \Th + \pr_l( u_\ep^l \Th) + \pr_l(\th_\ep U^l) \label{eq:transAndHighLow}\\
&+ \pr_l[  \sum_I \Th_I U_{\bar{I}}^l + \tilde{c}_A A^l + R_\ep^l ] \label{eq:theStressTerm} \\
&+  \sum_{J \neq \bar{I}} \pr_l(\Th_I U_J^l) \label{eq:highFreqTerms} \\
&+ \pr_l R_M^l
}
The term $R_M^l$ comes from the regularizations in Equation \eqref{eq:mollifyRappears}.

The first objective of the correction is to eliminate the term \eqref{eq:theStressTerm}, which is the only low frequency term that arises.  However, since we consider oscillations in essentially only one direction $\nab \xi_I \approx \pm 10^{[k]} \xi^{(1)}$, we will only able to eliminate the $A^l$ component of \eqref{eq:theStressTerm}.   

We begin by expanding the low frequency part of the interactions in line \eqref{eq:theStressTerm} as
\ali{
\sum_I (\Th_I U_{\bar{I}}^l) &= \fr{1}{2} \sum_I (\Th_I U_{\bar{I}}^l + \Th_{\bar{I}} U_I^l) \notag \\
&= \sum_{I \in \II_+} \th_I \bar{\th}_I (m^l(- \nab \xi_I) + m^l(\nab \xi_I)) + \tx{Lower Order Terms} \label{eqref:mainNonlinearTerm} \\
&= \sum_{I \in \II_+} |\th_I|^2 (m^l(- \nab \hat{\xi}_I) + m^l(\nab \hat{\xi}_I)) + \tx{Lower Order Terms} \\
&= \sum_{I \in \II_+} |\th_I|^2 A^l + \tx{Lower Order Terms} \label{eq:mainPartIsLikeAl}
}
We will give a complete list of the lower order terms below after we have chosen the amplitudes $\th_I$.

We wish to choose the amplitudes $\th_I$ so that the main term in \eqref{eq:mainPartIsLikeAl} cancels with the $A^l$ component of the other terms in line \eqref{eq:theStressTerm}.  We achieve this cancellation in two steps.  First, we decompose $R_\ep$ into components
\ali{
R_\ep^l &= c_J A^l + c_B B^l \label{eq:decomposeRepComponents}
}
We also subtract a constant vector field $\pr_l( e(t) A^l) = 0$ from line \eqref{eq:theStressTerm}, which leads us to impose to an equation
\ali{
 \sum_{I \in \II_+} |\th_I|^2 A^l &= e(t) A^l +\tilde{c}_A A^l + c_J A^l \label{eq:equationForAmplitudes}  \\
&= e(t)(1 + \varep) A^l \label{eq:eqForThIs}\\
\varep &= \fr{\tilde{c}_A + c_J}{e(t)} \label{eq:varepTerm}
}
for the amplitudes $\th_I$.  In this way, the amplitudes $\th_I$ are chosen to eliminate the $A^l$ component of the low frequency part of the stress $R_\ep^l$.

It will be important for our construction that the term $\varep$ is smaller than the constant $1$ in the $(1 + \varep)$ term in \eqref{eq:eqForThIs}.  From the lower bound $e(t) \geq K e_R$ assumed in \eqref{eq:lowBoundEoftx}, we can obtain an upper bound 
\ali{
\co{\varep} &\leq \fr{Z}{K} \label{eq:boundOnVarep}
}
on the size of the term \eqref{eq:varepTerm}, where $Z$ is a constant depending only on the vectors $A^l$ and $B^l$.  Now, provided $K \geq K_0 = 2 Z$, we have
\ali{
\co{\varep} &\leq \fr{1}{2} \label{eq:whichK0weneed}
}
A subtle point here is that the bound \eqref{eq:boundOnVarep} does not follow immediately from \eqref{eq:lowBoundEoftx}.  Namely, we must also check that the same lower bound remains true on the set
\ali{
e(t) \geq K e_R \mbox{ for all } (t,x) \in \supp ( \tilde{c}_A + c_J ), \label{eq:stillNeedToCheckLowBound}
}
which is slightly larger than the supports of the given $R_J$ and $c_A$ due to a regularization in time in the definitions of $\tilde{c}_A$, $c_J$.  Thus, the estimates \eqref{eq:boundOnVarep}-\eqref{eq:whichK0weneed} are guaranteed only after \eqref{eq:stillNeedToCheckLowBound} has been verified, which is accomplished in Line \eqref{eq:pickingLengthTimeScales} below when we choose the mollifying parameters.  We now assume that \eqref{eq:boundOnVarep}-\eqref{eq:whichK0weneed} hold in order to finish defining the construction.

From Equation \eqref{eq:eqForThIs}, we are led to choose amplitudes of the form
\ali{
\th_I &= e^{1/2}(t) \eta_k(t) \ga, \qquad I = (k,f) \label{eq:formForThi} \\
\ga &=  (1 + \varep)^{1/2}
}
The functions
\[ \eta_k(t) = \eta\left( \fr{t - k \tau}{\tau} \right) \]
are elements of a rescaled partition of unity in time
\[ \sum_{u \in \Z} \eta^2(t - u) = 1 \]
which we use to patch together local solutions of Equation \eqref{eq:eqForThIs}.  Our choice of $\eta_k$ ensures that each amplitude $\th_{(k,f)}$ has support in a time interval $[k \tau - \fr{2 \tau}{3},k \tau + \fr{2 \tau}{3}]$ of duration $\fr{4 \tau}{3}$.  The coefficient $\ga$ ensures that \eqref{eq:eqForThIs} is satisfied, and $\ga$ is assured to be well-defined by the bound \eqref{eq:whichK0weneed}.

To express the remaining error terms in a compact way, let us introduce the notation
\[ \tilde{\th}_I = \th_I + \de \th_I, \quad \tilde{u}_I^l = u_I^l + \de u_I^l \]
Thus, $\Th_I = e^{i \la \xi_I} \tilde{\th}_I$ and $U_I^l = e^{i \la \xi_I} \tilde{u}_I^l$.

Having chosen $\th_I$, we can now expand the error term in \eqref{eq:theStressTerm} as follows
\ali{
 \sum_I \Th_I U_{\bar{I}}^l &+ \tilde{c}_A A^l + R_\ep^l = c_B B^l + R_S^l \\
R_S^l &= \sum_I (\Th_I U_{\bar{I}}^l) - \sum_{I \in \II_+} |\th_I|^2 A^l 
}
We now expand
\ALI{
(\Th_I U_{\bar{I}}^l + \Th_{\bar{I}} U_I^l) &= \tilde{\th}_I \tilde{u}_{\bar{I}}^l + \tilde{\th}_{\bar{I}} \tilde{u}_I^l \\
&= |\th_I|^2 (m^l(-\nab \hat{\xi}_I) + m^l(\nab \hat{\xi}_I) ) + R_{S,1}^l + R_{S, 2}^l  \\
R_{SI,1}^l &= |\th_I|^2 [ (m^l(-\nab \xi_I) - m^l(-\nab \hat{\xi}_I)) + ( m^l(\nab \xi_I) - m^l(\nab \hat{\xi}_I) ) ] \\
\begin{split}
R_{SI,2}^l &= \de \th_I \tilde{u}_{\bar{I}}^l + \tilde{\th}_I \de u_{\bar{I}}^l - \de \th_I \de u_{\bar{I}}^l  + \de \th_{\bar{I}} \tilde{u}_{I}^l + \tilde{\th}_{\bar{I}} \de u_{I}^l - \de \th_{\bar{I}} \de u_{I}^l
\end{split}
}
which gives
\ali{
R_S^l &= \sum_I (R_{SI,1}^l + R_{SI,2}^l ) \label{eq:stressErrorInPieces}
}
Note that, at any given time $t$, at most four indices $I$ contribute to the sum in \eqref{eq:stressErrorInPieces}.

\subsection{The remaining error terms }\label{sec:theErrorTerms}

In Sections \ref{sec:shapeOfCorrections}-\ref{sec:chooseAmplitudes} we have defined the construction up to the specification of a few parameters.  Our result is that the corrected field $\th_1 = \th + \Th$ and drift velocity $u_1^l = u^l + U^l$ satisfy Equation \eqref{eq:scalarStressB} with $c_B$ defined in line \eqref{eq:decomposeRepComponents}, and
\ali{
 R_1 &= R_T + R_L + R_H + R_M + R_S \label{eq:allTheErrors}
}
The terms $R_M$ $R_S$ are defined in \eqref{eq:mollifyRappears} and \eqref{eq:stressErrorInPieces}.  We now rewrite the remaining terms in Equations \eqref{eq:transAndHighLow}-\eqref{eq:highFreqTerms} using the fact that the velocity fields appearing in these equations are divergence free.

The {\bf transport term} $R_T$ is obtained by solving
\ali{
\pr_l R_T^l &= (\pr_t + u_\ep^l \pr_l) \Th \label{eq:transportTerm} \\
&= \sum_I e^{i \la \xi_I} (\pr_t + u_\ep^l \pr_l){\tilde \th}_I \label{eq:explicitTransport}
}
Here the term where the derivative hits the phase functions vanishes according to equation \eqref{eq:transportEqn}.  Formula \eqref{eq:explicitTransport} suggests that the transport term has frequency $\la$, so we expect to gain a factor $\la^{-1}$ in solving equation \eqref{eq:transportTerm}.  In fact, we will choose our mollification $u_\ep$ to be a frequency-localized version of $u$ so that together with \eqref{eq:ThIisLike}, the term \eqref{eq:transportTerm} is literally supported on frequencies of size $\fr{\la}{3} \leq |\xi| \leq 20 \la$.  Hence, there is a frequency localizing operator $P_{\approx \la}$ satisfying
\[ (\pr_t + u_\ep^l \pr_l) \Th = P_{\approx \la}[ (\pr_t + u_\ep \cdot \nab) \Th ]  \]
This frequency localization property allows us to simply define
\ali{
R_T^l &= \pr^l \De^{-1} P_{\approx \la} [ (\pr_t + u_\ep \cdot \nab) \Th ] \label{eq:RTdef}
}
In particular, we obtain the bound
\ali{
 \co{R_T} &\leq \la^{-1} \co{(\pr_t + u_\ep \cdot \nab) \Th} \label{eq:transportBound1}
}

The terms remaining from \eqref{eq:transAndHighLow} and \eqref{eq:highFreqTerms} are the {\bf High-Low term}
\ali{
\pr_l R_L^l &= U^l \pr_l \th_\ep \label{eq:highLowTerm} \\
&= \sum_I e^{i \la \xi_I} {\tilde u}_I^j \pr_j \th_\ep
}
and the {\bf high frequency interference terms} 
\ali{
\pr_l R_H^l &= \sum_{J \neq \bar{I}} U_J^l \pr_l \Th_I
}
The frequency cutoffs in our definitions of $\th_\ep, U_I$ and $\Th_I$ ensure both of these terms have Fourier support in frequencies $\fr{\la}{3} \leq |\xi| \leq 40 \la$.  Here it is important that we have localized the frequency support each $\Th_I$ and $U_I^l$ to a limited range of angles.  As a consequence, 
\[ U^l \pr_l \th_\ep = P_{\approx \la} [ U^l \pr_l \th_\ep ] \] 
\[ U_J^l \pr_l \Th_I = P_{\approx \la} [U_J^l \pr_l \Th_I] \]
for some frequency projection operator $P_{\approx \la}$, and we can define
\ali{
R_L^l &= \pr^l \De^{-1} P_{\approx \la} [ U^j \pr_j \th_\ep ] \label{eq:highLowTermIs}\\
R_H^l &= \sum_{J \neq \bar{I}} \pr^l \De^{-1} P_{\approx \la} [ U_J^l \pr_l \Th_I ] \label{eq:highFreqPartOfStressIs}
}
Now that we have written down the error terms \eqref{eq:allTheErrors}, we must observe that each of these terms can be made small.  For the transport term $R_T^l$, the estimate \eqref{eq:transportBound1} ensures that $R_T$ is small once $\la$ is chosen sufficiently large, and the same type of estimate can be used to control the High-Low term $R_L$.  The high-frequency interference terms require a more careful treatment.

Let us focus on an individual term in the sum.
\ali{
U_J^l \pr_l \Th_I &=  ( i \la) e^{i \la(\xi_I + \xi_J)} \tilde{u}_J^l \pr_l {\tilde \th}_I \label{eq:mainHighFreq1}
}
We expand this term as
\ali{
 U_J^l \pr_l \Th_I &=  ( i \la) e^{i \la(\xi_I + \xi_J)} (\th_J m^l(\nab \xi_J) + \de u_J^l) \pr_l \xi_I {\tilde \th}_I \label{eq:mainInterferenceTerm}
} 
If we regard the phase gradients $\nab \xi_I \approx \nab \hat{\xi}_I$ as perturbations of their initial values, the main term in \eqref{eq:mainInterferenceTerm} vanishes
\[ m^l(\nab \hat{\xi}_J) \pr_l \hat{\xi}_I = m^l(\pm \nab \hat{\xi}_I) \pr_l \hat{\xi}_I = 0\] 
from the degree zero homogeneity of $m(\xi)$ and the identity $m(\xi) \cdot \xi = 0$.

The terms which remain are all lower order
\ali{
\fr{1}{(i \la)} U_J^l \pr_l \Th_I &= e^{i \la (\xi_I + \xi_J)} \th_J {\tilde \th}_I ( m^l(\nab \xi_J) - m^l(\nab \hat{\xi}_J) ) \pr_l \xi_I \label{eq:MIJfirst} \\
&+ e^{i \la (\xi_I + \xi_J)} \th_I {\tilde \th}_J m^l(\nab \hat{\xi}_J)( \pr_l \xi_I - \pr_l \hat{\xi}_I) \label{eq:MIJfirst2} \\
&+ e^{i \la (\xi_I + \xi_J)} \de u_J^l \pr_l \xi_I {\tilde \th}_I \label{eq:MIJlast}
}
The terms \eqref{eq:MIJfirst}, \eqref{eq:MIJfirst2} are made small by choosing the lifespan parameter $\tau$ to be small, while the term \eqref{eq:MIJlast} is made small once $\la$ is chosen sufficiently large (see Section~\ref{sec:lifespanChoice}). The high-frequency term $R_H$ itself is then controlled by the estimate
\[ \co{R_H} \leq \fr{C}{\la} \co{ \sum_{J \neq \bar{I}} U_J^l \pr_l \Th_I } \]
from the formula \eqref{eq:highFreqPartOfStressIs}.  This calculation concludes our list of the error terms \eqref{eq:allTheErrors}.  What remains is to specify the parameters in the construction, prove estimates for the elements of the construction and finally to check that the estimates stated in Lemma \ref{lem:mainLemma} are satisfied.

\section{Specifying Parameters and the Mollification Term} \label{sec:specifyingParams}
To initialize the argument, we must specify how we regularize the given solution $(\th, u, c_A, R^l)$ to the compound scalar-stress equation.  In this section, we specify how these regularizations are defined.  Because the flow map of the regularized velocity is used to define the regularizations of $c_A$ and $R^l$, it is necessary to start with the defininition of the regularized velocity.  After the regularizations of $c_A$ and $R^l$ are defined, we are able to verify the lower bound \eqref{eq:stillNeedToCheckLowBound} which had been assumed previously to guarantee a well-defined construction.

To obtain the regularized scalar field $\th_\ep$ and drift velocity $u_\ep$, we take low frequency projections in the spatial variables with length scale parameters $\ep_\th$ and $\ep_u$
\ali{
\th_\ep &= P_{\leq q}^2 \th, \qquad \mbox{where}\qquad 2^{-q} = \ep_\th \\
u_\ep &= P_{\leq q}^2 u, \qquad \mbox{where} \qquad 2^{-q} = \ep_u \label{eq:mollifiedu}
}
The reason for the double mollification in equation \eqref{eq:mollifiedu} will become apparent during the commutator estimates of Section~\ref{sec:estimatesCorrections}.  
The operator is given by rescaling a Fourier multiplier
\[ \widehat{P_{\leq \chi} F}(\xi) = \hat{\eta}\left(\fr{\xi}{2^\chi}\right) \hat{F}(\xi) \]
where $\hat{\eta}(\xi)$ is a smooth function with compact support in $|\xi| \leq 2$ that is equal to $1$ on $|\xi| \leq 1$.

By well-known estimates for convolutions with mollifiers satisfying vanishing moment conditions (see \cite{isett} Section 5.1), we have
\ali{
\co{ \th - \th_\ep } &\leq C_L \ep_\th^L \co{ \nab^L \th } \label{eq:basicConvEstth}\\
 \co{ u - u_\ep } &\leq C_L \ep_u^L \co{ \nab^L u } \label{eq:basicConvEstu}
}
We want to choose the length scales $\ep_\th$ and $\ep_u$ as large as possible while ensuring that the mollification term $R_M^l$ in \eqref{eq:mollifyRappears} is acceptably small.  The main terms in \eqref{eq:mollifyRappears} where these mollification errors appear are
\ali{
R_{M,\th}^l &= \sum_I (\th - \th_\ep) e^{i \la \xi_I} u_I^l \label{eq:mollThTerm}\\
R_{M,u}^l &= \sum_I e^{i \la \xi_I} \th_I (u^l - u_\ep^l) \label{eq:molluTerm}
}
Logically, the terms \eqref{eq:mollThTerm} and \eqref{eq:molluTerm} are not well-defined until we have specified how to define $u_\ep$.  However, from the expressions \eqref{eq:formForThi} and \eqref{eq:uIexpression} and the bound \eqref{ineq:goodEnergy} we have an a priori estimate
\ali{
\co{R_{M,\th}} + \co{R_{M,u}} &\leq A e_R^{1/2} (\co{\th - \th_\ep} + \co{u - u_\ep}) \label{eq:aPrioriMollTerms}
}
for $A$ a constant depending only on the parameter $M$ in Lemma \ref{lem:mainLemma}.

Using \eqref{eq:basicConvEstth}-\eqref{eq:basicConvEstu} for $a = L$ and the bound \eqref{bound:nabkth}, we can choose parameters of the form
\ali{
 \ep_\th &= \ep_u = \fr{1}{B} N^{1/L} \Xi \label{eq:epParamChoice}
}
Here $B$ is some large constant depending on $A$ in \eqref{eq:aPrioriMollTerms} chosen to assure that
\ali{
\co{R_{M,\th}} + \co{R_{M,u}} &\leq \fr{e_v^{1/2}e_R^{1/2}}{1000 N} \label{eq:ourFirstMollifyGoal}
}
The estimate \eqref{eq:ourFirstMollifyGoal} is stronger than what we require for Lemma \ref{lem:mainLemma}.  Rather, estimate \eqref{eq:ourFirstMollifyGoal} is the type of bound one requires to obtain solutions with regularity $1/3 -$ (see \cite{isett} Section 4.6).  

Observe that the parameter choice \eqref{eq:epParamChoice} is exactly the choice of parameter taken in \cite{isett} Section 5.2 up to a constant.  We will therefore in many cases be able to refer to the estimates of \cite{isett} without repeating the proofs.

Having defined $\th_\ep$ and $u_\ep$, we can now define our regularizations $\tilde{c}_A$ and $R_\ep^l$ of $c_A$ and $R_J^l$.

Following \cite{isett}, we define these regularizations using the coarse scale flow $\Phi_s$ associated to $\pr_t + u_\ep \cdot \nab$, whose definition we now recall.
\begin{defn}\label{defn:coarseScaleFlow} We define the {\bf coarse scale flow} $\Phi_s(t,x) : \R \times \R \times \T^2 \to \R \times \T^2$ to be the unique solution to the ODE 
\ALI{
\fr{d}{ds} \Phi_s^0(t,x) &= 1 \\
\fr{d}{ds} \Phi_s^j(t,x) &= u_\ep^j(\Phi_s(t,x)), \qquad j = 1, 2 \\
\Phi_0(t,x) &= (t,x)
}
\end{defn}

We can now define our regularizations for $c_A$ and $R_J$.  First, we mollify both $c_A$ and $R_J^l$ in space to define
\ali{
c_{A, \ep_x} &= \eta_{\ep_x} \ast c_A \\
R_{\ep_x} &= \eta_{\ep_x} \ast R_J
}
We then use the coarse scale flow $\Phi_s$ and a smooth function $\eta_{\ep_t}(s)$ supported in $|s| \leq \ep_t$ with integral $\int \eta_{\ep_t}(s) ds = 1$ to average in time and form
\ali{
\tilde{c}_A(t,x) &= \int c_{A, \ep_x}(\Phi_s(t,x) ) \eta_{\ep_t}(s) ds \\
R_\ep^l(t,x) &= \int R_{\ep_x}(\Phi_s(t,x)) \eta_{\ep_t}(s) ds
}
Thus, the values of $R_\ep(t,x)$ and $\tilde{c}_A(t,x)$ are obtained by averaging $c_A$ and $R_J^l$ over an $\ep_x$-neighborhood of a time $2\ep_t$ flow line through $(t,x)$.

To estimate $\tilde{c}_A$ and $R_\ep^l$, we recall that both $c_A$ and $R_J$ satisfy the estimates
\ali{
|| \nab^k c_A ||_{C^0} + \left( \fr{e_R}{e_J} \right) \| \nab^k R_J \| &\leq 2 \Xi^k e_R &k = 0, \ldots, L \label{bound:nabkCAandR} \\
|| \nab^k (\pr_t + u \cdot \nab) c_A ||_{C^0} + \left( \fr{e_R}{e_J} \right) || \nab^k (\pr_t + u \cdot \nab) R_J ||_{C^0} &\leq 2 \Xi^{k+1} e_v^{1/2} e_R  &k = 0, \ldots, L - 1 \label{bound:dtnabkCAandR}
}
coming from the compound frequency-energy levels of Definition \ref{def:freqEnDef}.

Since the bounds \eqref{bound:nabkCAandR}-\eqref{bound:dtnabkCAandR} coincide with the bounds for the tensor $R^{jl}$ in \cite{isett}, we can draw from the results of that paper to control $R_\ep^l$ and $\tilde{c}_A$.

Following Section 5.5 of \cite{isett}, we choose length and time scales of the form
\ali{
 \ep_x = \fr{1}{B} N^{1/L} \Xi, \quad \ep_t = \fr{1}{B} (N \Xi)^{-1} e_R^{-1/2} \label{eq:epXandepT}
}
We choose $B \geq 1$ large enough to bound the terms
\ali{
\co{ (\tilde{c}_A - c_A) A^l } + \co{ R_J^l - R_\ep^l } &\leq \fr{e_v^{1/2}e_R^{1/2}}{100 N}  \label{eq:pickingLengthTimeScales}
}
which appear in the list of error terms from mollification of Equation \eqref{eq:mollifyRappears}.  

Note that the choice of parameters~\eqref{eq:epXandepT} is the same as the choice made in~\cite[Section 18.3]{isett}, and therefore leads to the same bounds
\ali{
\begin{split}
\co{ \nabla^k \left(\Ddt \right)^r \tilde{c}_A } &\leq C_k \Xi^k e_R (\Xi e_v^{1/2} )^{(r\geq 1)} (N \Xi e_R^{1/2})^{(r\geq 2)} N^{(k+1-L)_+/L} \\
\co{\nabla^k \left( \Ddt \right)^r R_J} +  \co{\nabla^k \left( \Ddt \right)^r c_J}  &\leq C_k \Xi^k e_J (\Xi e_v^{1/2} )^{(r\geq 1)} (N \Xi e_R^{1/2})^{(r\geq 2)} N^{(k+1-L)_+/L}
\end{split} \label{eq:tilde:cA:bounds}
}
where we use the notation $(r\geq m) = \chi_{[m,\infty)}(r)$.  The fact we are using here is that $c_A$ obeys the same estimates as the stress $R^{jl}$ in \cite{isett}, and the terms $R_J$ and $c_J$ satisfy even better bounds.  The details of the proof are carried out in \cite[Section 18]{isett}.

A crucial point here is that \eqref{eq:tilde:cA:bounds} contains estimates on {\it second order} advective derivatives, even though our assumed bounds \eqref{bound:dtnabkCA}, \eqref{bound:dtnabkR} on $c_A$ and $R_J $ contain only information regarding first order advective derivatives.  The ability to obtain this estimate comes from the fact that the advective derivative $\Ddt$ commutes with its own flow $\Phi_s$, and thus commutes with the averaging along the flow.  This observation allows us to integrate by parts in
\ali{
\Ddt R_\ep^l(t,x) &= \int \Ddt R_{\ep_x}(\Phi_s(t,x)) \eta_{\ep_t}(s) ds \\
&= - \int R_{\ep_x}(\Phi_s(t,x)) \eta_{\ep_t}'(s) ds
}
This computation explains why the cost of the second advective derivative in \eqref{eq:tilde:cA:bounds} is exactly a factor of $\ep_t^{-1}$ for the choice of parameter \eqref{eq:epXandepT}.  We refer to \cite[Section 18.6.1]{isett} and to \cite[Section 12.1]{IsettOhHeat13} for two different proofs of this identity.

Having defined $\tilde{c}_A$ and $R_\ep$, we are now able to verify the lower bound \eqref{eq:stillNeedToCheckLowBound} on the energy profile, which had been assumed previously in many of the formulas in our construction.  From the assumption \eqref{sub:suppAssumption} that $\supp c_A \cup \supp R_J \subseteq I \times \T^2$, we have by construction that
\[ \supp \tilde{c}_A \cup \supp R_\ep \cup \supp c_J \subseteq I \pm \ep_t \times \T^2 \]
Since we assumed the lower bound \eqref{eq:lowBoundEoftx} for $e(t)$ on the interval $I \pm \Xi^{-1} e_v^{-1/2}$, it suffices to check that $\ep_t < \Xi^{-1} e_v^{-1/2}$.  This inequality follows from the definition \eqref{eq:epXandepT} of $\ep_t$ and the inequality $N \geq \left( \fr{e_v}{e_R} \right)^{1/2}$, which follows from condition \eqref{eq:conditionsOnN2}.

At this point, the only term that remains to be estimated in the mollification term
\ali{
 R_M^l &= R_{M,\th}^l + R_{M,u}^l + (c_A - \tilde{c}_A) A^l + (R_J^l - R_\ep^l) + R_{M'}^l
 \label{eq:RMl:mollified}
}
is given by
\ali{
R_{M'}^l &= \sum_I e^{i \la \xi_I} (u^l - u_\ep^l) \de \th_I + \sum_I e^{i \la \xi_I} (\th - \th_\ep) \de u_I^l
}
This term will be estimated when we choose the parameter $\la$ at the end of the argument.

\subsection{The choice of lifespan parameter and the limiting error terms}\label{sec:lifespanChoice}

The present Section is devoted to choosing the lifespan parameter $\tau$.  Here we motivate the choice of $\tau$ by comparing the estimates that will be satisfied by the main error terms and optimizing.  However, we warn the reader that the estimates stated in this Section have not yet been established, but will follow from the bounds of Section \ref{sec:estimatesCorrections} below.

The lifespan parameter $\tau$ determines the length of time during which an amplitude
\ali{
 \th_I &= e^{1/2}(t) \eta\left( \fr{t - t(I)}{\tau} \right) \ga  \label{eq:recallAmplitudes}
}
is allowed to remain nonzero.  The parameter $\tau$ is chosen to be small so that the gradients of the phase functions, which satisfy the transport equation
\ali{
(\pr_t + u_\ep^j \pr_j) \pr^l \xi_I &= - \pr^l u_\ep^j \pr_j \xi_I, \label{eq:transportForPhaseGrad}
}
remain close to their initial values $\nab \xi_I \approx \nab \hat{\xi}_I$.  More precisely, equation \eqref{eq:transportForPhaseGrad} with initial data $\xi_I(t(I), x) = \hat{\xi}_I(x)$ leads to a bound of the form
\ali{
|\nab \xi_I(\Phi_s(t(I),x)) - \nab {\hat \xi}_I(x)| &\leq A e^{A \Xi e_v^{1/2} \tau} (\Xi e_v^{1/2}) \tau, \qquad |s|\leq \tau \label{eq:generalPhaseGradChange}
}
where $\Xi e_v^{1/2}$ is an upper bound on $\co{ \nab u_\ep } \leq \Xi e_v^{1/2}$, cf. Lemma~\ref{lem:transport:estimates} below.

In our case, we require that $\tau \leq \Xi^{-1} e_v^{-1/2}$, so that the estimate \eqref{eq:generalPhaseGradChange} becomes 
\ali{
\co{ \nab \xi_I - \nab \hat{\xi}_I } &\leq A (\Xi e_v^{1/2}) \tau \label{eq:estForPhaseGradDiscrep}
}

Here, there are two main error terms which require the choice of a sharp time cutoff in order to control.  The first such term, which is familiar from the case of the Euler equations, is the set of high-frequency interference terms in \eqref{eq:highFreqPartOfStressIs}
\ali{
R_H^l &= \sum_{J \neq \bar{I}} \pr^l \De^{-1} P_{\approx \la} [U_J^j \pr_j \Th_I] \label{eq:leadingHighFreqTermAgain} 
}
Recall from \eqref{eq:MIJfirst}-\eqref{eq:MIJlast} that each term in the series~\eqref{eq:leadingHighFreqTermAgain} can be expressed to leading order as 
\ali{
\fr{1}{(i \la)} U_J^l \pr_l \Th_I &= e^{i \la (\xi_I + \xi_J)} \th_J {\tilde \th}_I ( m^l(\nab \xi_J) - m^l(\nab \hat{\xi}_J) ) \pr_l \xi_I \label{eq:MIJfirstAgain} \\
&+ e^{i \la (\xi_I + \xi_J)} \th_I {\tilde \th}_J m^l(\nab \hat{\xi}_J)( \pr_l \xi_I - \pr_l \hat{\xi}_I) \label{eq:MIJfirst2Again} \\
&+ \tx{lower order terms} \label{eq:MIJlastAgain}
}

Formula \eqref{eq:leadingHighFreqTermAgain} leads to the bound
\ali{
\co{ R_H } &\leq \fr{A}{\la} \co{ \sum_{J \neq \bar{I}} U_J^j \pr_j \Th_I } \notag \\
&\leq A \max_I \co{\th_I}^2 (\co{m^l(\nab \xi_I) - m^l(\nab \hat{\xi}_I)} + \co{ \nab \xi_I - \nab \hat{\xi}_I } ) + \tx{Lower order terms} \notag \\
&\leq A e_R \max_I \co{ \nab \xi_I - \nab \hat{\xi}_I } + \tx{Lower order terms}  \\
&\leq A e_R (\Xi e_v^{1/2} \tau) + \tx{Lower order terms} \label{eq:highFreqEstimate}
}
where the constant $A$ changes from line to line.  The error term \eqref{eq:highFreqEstimate} is made small by choosing the lifespan parameter $\tau$ to rhe small compared to the natural time scale $\Xi^{-1} e_v^{-1/2}$ of the coarse scale velocity $u_\ep$.  The other terms in \eqref{eq:highFreqEstimate} are lower order in the sense that they can be made small by a suitable choice of $\la$.

The price we pay by introducing sharp cutoffs is a worse bound on the transport term.  
\ali{
R_T^l &= \pr^l \De^{-1} P_{\approx \la} [ (\pr_t + u_\ep \cdot \nab) \Th ] \\
(\pr_t + u_\ep \cdot \nab) \Th &= \sum_I e^{i \la \xi_I} (\pr_t + u_\ep \cdot \nab) \th_I + \tx{Lower order terms}
}
The time cutoffs appear in the formula \eqref{eq:recallAmplitudes} for the amplitude, and give rise to a term of size 
\ali{
\co{ (\pr_t + u_\ep \cdot \nab) \Th } &= A \tau^{-1} e_R^{1/2} + \tx{Lower order terms} 
}
which leads in turn from the definition \eqref{eq:lambdaDef} of $\la$. to a bound on the transport term
\ali{
\co{R_T} &\leq A \la^{-1} \co{ (\pr_t + u_\ep \cdot \nab) \Th } \\
&\leq A B_\la^{-1} (N \Xi)^{-1} \tau^{-1}e_R^{1/2} + \tx{Lower order terms} \label{eq:transTermEstimate}
}
We therefore choose
\ali{
\tau = B_\la^{-1/2} \left( \fr{e_v^{1/2}}{e_R^{1/2} N} \right)^{1/2} \Xi^{-1} e_v^{-1/2}
\label{eq:tau:def}
}
in order to optimize between the estimates for the leading term in \eqref{eq:highFreqEstimate} and \eqref{eq:transTermEstimate}.  This choice leads to the $C^0$ estimate 
\ali{
 \co{R_1} &\unlhd \left( \fr{e_v^{1/2}}{e_R^{1/2} N} \right)^{1/2} \fr{e_v^{1/2} e_R^{1/2}}{N}  \label{eq:estimateWellProve}
}
stated in Lemma \ref{lem:mainLemma}, and ultimately to the regularity $1/9-$.

Unlike the case of the Euler equations, there is also a second error term which requires sharp time cutoffs to make small in our present scheme, namely the Stress term $R_S$ appearing in \eqref{eq:stressErrorInPieces}.  It turns out that this term also satisfies the same estimate \eqref{eq:highFreqEstimate}, and consequently will be among the largest error terms, having size \eqref{eq:estimateWellProve} after the above choice of $\tau$.  The reason that we see this extra term compared to the case of Euler is that the method we have used here to solve the quadratic equation \eqref{eq:equationForAmplitudes} requires the phase gradients $\nab \xi_I$ to remain very close to their initial values $\nab \xi_I \approx \nab \hat{\xi}_I$ to within an error much smaller than $O(1)$.  In the case of Euler, the equation analogous to \eqref{eq:shiftedAlgebraEquation} can be solved using nonlinear phase functions in a way which allows for the phase gradients to depart from their initial values by an error of size $\co{ \nab \xi_I - \nab \hat{\xi}_I } = O(1)$ (see \cite[Section 7.3]{isett}).  Ideally, one would hope to solve equation \eqref{eq:shiftedAlgebraEquation} in a similar manner to avoid generating error terms such as these which require sharp time cutoffs to treat as above.  




We now turn our attention to obtaining estimates for the terms in the construction.  In particular, we need to establish the estimates \eqref{eq:highFreqEstimate} and \eqref{eq:transTermEstimate} precisely, and also to estimate the other error terms.  The proof is concluded by choosing the constant $B_\la$ in \eqref{eq:lambdaDef} to be sufficiently large so that the inequality \eqref{eq:estimateWellProve} holds as stated, without any implicit constant factor.

\section{Basic Estimates for the Construction} \label{sec:estimatesCorrections}

\begin{lem}[Coarse Scale Flow Estimates]
\label{lem:coarse:scale}
Let $L\geq 2$ be an integeras in Lemma~\ref{lem:mainLemma}. The mollified velocity field $u_\ep$ defined in \eqref{eq:mollifiedu} obeys the estimates
\ali{
\| \nabla^k u_{\epsilon} \|_{C^0} & \leq C_k \Xi^k e_v^{1/2} N^{(k-L)_+/L}, \quad k \geq 1, \label{eq:mollified:u:pure:gradients}\\
\| \nabla^k (\partial_t + u_\ep \cdot \nabla) u_{\ep} \|_{C^0} & \leq C_k \Xi^{k+1} e_v N^{(k+1-L)_+/L}, \quad k \geq 0 \label{eq:mollified:u:convective:gradients}
}
for some universal positive constants $C_k$.
\end{lem}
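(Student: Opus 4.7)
The plan is to handle the two bounds separately. For~\eqref{eq:mollified:u:pure:gradients}, when $1 \leq k \leq L$ the uniform $C^0$-boundedness of $P_{\leq q}^2$ combined with hypothesis~\eqref{bound:nabkth} gives directly
\[\co{\nab^k u_\ep} = \co{P_{\leq q}^2 \nab^k u} \lesssim \co{\nab^k u} \leq \Xi^k e_v^{1/2},\]
so $N^{(k-L)_+/L} = 1$ suffices. For $k > L$, the field $u_\ep$ has Fourier support in $\{|\xi| \lesssim 2^q = \ep_u^{-1} \sim N^{1/L}\Xi\}$, so Bernstein's inequality applied to $\nab^L u_\ep$ yields
\[\co{\nab^k u_\ep} \lesssim \ep_u^{-(k-L)} \co{\nab^L u} \lesssim (N^{1/L}\Xi)^{k-L} \Xi^L e_v^{1/2} = \Xi^k e_v^{1/2} N^{(k-L)/L},\]
which is the claimed bound.

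For~\eqref{eq:mollified:u:convective:gradients}, writing $D_t := \pr_t + u\cdot \nab$, the starting point is the algebraic decomposition
\[(\pr_t + u_\ep \cdot \nab) u_\ep = P_{\leq q}^2 D_t u + \mathcal{C}, \qquad \mathcal{C} := u_\ep \cdot \nab u_\ep - P_{\leq q}^2 (u \cdot \nab u).\]
The first term is handled exactly as in the spatial case, using hypothesis~\eqref{bound:nabkdtu} for $k \leq L-1$ and Bernstein on the frequency-localized $P_{\leq q}^2 D_t u$ for larger $k$, which produces the exponent $(k+1-L)_+/L$. The commutator $\mathcal{C}$ is the place where the \emph{double} mollification in~\eqref{eq:mollifiedu} pays off. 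Writing $P = P_{\leq q}$ and telescoping gives
\[\mathcal{C} = \bigl[(P^2 u)\cdot \nab(P^2 u) - P\bigl((Pu)\cdot \nab(Pu)\bigr)\bigr] + P\bigl[(Pu)\cdot \nab(Pu) - P(u\cdot \nab u)\bigr],\]
which exhibits $\mathcal{C}$ as a sum of two standard single-mollifier commutators. Each of these is controlled by the Constantin--E--Titi pointwise identity
\[P(fg)(x) - (Pf)(x)(Pg)(x) = \int K(h)\bigl(f(x-h)-f(x)\bigr)\bigl(g(x-h)-g(x)\bigr)\, dh - (f-Pf)(x)(g-Pg)(x),\]
applied with $f = u$ (or $Pu$) and $g = \nab u$ (or $\nab Pu$), where $K$ is the kernel of $P$ concentrated at scale $\ep_u$. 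Integrating against the finite differences trades two copies of $\ep_u$ for two spatial derivatives of $u$, giving the base estimate $\co{\mathcal{C}} \lesssim \ep_u^2 \co{\nab u}\, \co{\nab^2 u} \lesssim \ep_u^2 \Xi^3 e_v$, which is stronger than what is needed at $k=0$, and an analogous chain of estimates for $\nab^k \mathcal{C}$.

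The main obstacle will be bookkeeping for $\nab^k \mathcal{C}$ when $k$ is close to or above $L-1$: in that regime the naive estimate distributing $\nab^k$ onto the two factors of $u$ runs out of assumed regularity, since we control only $L$ spatial and $L-1$ advective derivatives of $u$. The resolution is to switch strategies and exploit the frequency localization of $\mathcal{C}$ at scale $\ep_u^{-1}$, applying Bernstein to transfer excess derivatives onto factors of $\ep_u^{-1} \sim N^{1/L}\Xi$ outside a lower-order estimate on $\mathcal{C}$ itself. Matching the two gained $\ep_u$ factors from the CET identity against the $\ep_u^{-1}$ factors from Bernstein and the hypothesized bounds on $D_t u$ must exactly reproduce the exponent $(k+1-L)_+/L$ at every $k \geq 0$; verifying this via a case split between low, middle, and high $k$ is the principal computation. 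Once both bounds are in place, the compound frequency-energy structure of the mollified field is identical in form to the corresponding estimates in~\cite{isett}, as the paper already signals.
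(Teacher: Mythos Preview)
Your argument is correct and follows the paper's architecture closely. For \eqref{eq:mollified:u:pure:gradients} the two proofs are identical: hypothesis \eqref{bound:nabkth} for $k\le L$, then Bernstein on the frequency-localized $u_\ep$ for $k>L$. For \eqref{eq:mollified:u:convective:gradients} both write $(\pr_t+u_\ep\cdot\nab)u_\ep = P_{\le q}^2[(\pr_t+u\cdot\nab)u] + Q_\ep(u,u)$ and handle the first piece via \eqref{bound:nabkdtu} plus Bernstein.

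The only genuine difference is in how the quadratic commutator $Q_\ep(u,u)=\mathcal{C}$ is decomposed. The paper telescopes it into three terms of the form $[a\cdot\nab,P_{\le q}]\,b$ and $(a-P_{\le q}a)\cdot\nab P_{\le q}b$ (its equation \eqref{eq:Q:u:u}, following \cite[Section~16]{isett}), each of which gains one factor of $\ep_u$ by the standard single-commutator estimate, yielding $\co{Q_\ep(u,u)}\le C\ep_u\Xi^2 e_v$. You instead telescope into two single-mollifier bilinear commutators and apply the Constantin--E--Titi identity, which gains two factors of $\ep_u$ and gives the slightly sharper $\ep_u^2\Xi^3 e_v$. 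Both are standard and adequate: the target at $k=0$ is only $\Xi e_v$, so the extra gain is not needed, and the higher-$k$ bookkeeping (distribute derivatives until the assumed regularity runs out, then transfer excess derivatives via Bernstein on the frequency-localized $\mathcal{C}$) is the same in either approach.
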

\begin{proof}
For $k \leq L$, we see that \eqref{eq:mollified:u:pure:gradients} holds in view of the iterative assumption \eqref{bound:nabkth}. For $k>L$, there is an additional cost of $\ep_u^{(k-L)_+} = (B^{-1} N^{1/L} \Xi)^{(k-L)_+}$, where we have used the choice of $\ep_u$ in~\eqref{eq:epParamChoice}.

In order to prove \eqref{eq:mollified:u:convective:gradients}, we recall that $u_\ep^j  = P_{\leq q}^2 u^j$, where $2^{-q} = \ep_u=B^{-1} N^{1/L} \Xi$. We then have
\ali{
P_{\leq q}^2 (\pr_t u + u \cdot \nabla u) = ( \pr_t u_{\ep} + u_{\ep} \cdot \nabla u_{\ep}) - Q_{\ep}(u,u), \label{eq:commuteAdvec}
}
where
\ali{
Q_{\ep}^j(u,u)
&= u_\ep^i \pr_i u_\ep^j - P_{\leq q}^2(u^i \pr_i u^j) \\
&= [P_{\leq q}^2 u^i \pr_i, P_{\leq q}] (P_{\leq q} u^j) + P_{\leq q} \Big( [u^i \pr_i, P_{\leq q}] u^j \Big) -   P_{\leq q} \Big( (u^i - P_{\leq q}u^i) \pr_i (P_{\leq q} u^j) \Big). 
\label{eq:Q:u:u}
}
The estimate
\ali{
\|Q_{\ep}(u,u) \|_{C^0} \leq C \ep_u \Xi^2 e_v \leq C N^{-1/L} \Xi e_v
}
follows from \eqref{eq:Q:u:u} precisely as in \cite[Section~16]{isett}, by appealing to \eqref{bound:nabkth}. The decomposition \eqref{eq:Q:u:u} of the quadratic commutator term is convenient since it allows one to estimate without additional complications the higher order derivatives $\nabla^k Q_{\ep}(u,u)$. Derivatives up to order $L-1$ each introduce a factor of $\Xi$, while past that order the derivatives fall on the mollifier $P_{\leq q}$ and the cost per derivative is a constant multiple of $\Xi N^{1/L}$. Combining with
\ali{
\| \nabla^k P_{\leq q}^2 (\partial_t u^j + u^i \pr_i u^j) \|_{C^0} \leq C_k \Xi^{k+1} e_v N^{(k+1-L)_+/L}
}
which follows from the definition of $q$ and \eqref{bound:nabkdtu}, the proof of \eqref{eq:mollified:u:convective:gradients} is completed.
\end{proof}

\begin{lem}[Commutator Estimates]\label{lem:commutatorEstimates}
Let $D \geq 1$ and let $Q$ be a convolution operator
\[ Q f(x) = \int_{\R^2} f(x-h) q(h) dh \]
whose kernel $q$ satisfies the estimates
\ALI{
\| ~|\nab^k q|(h)~ \|_{L^1(\R^2)} + \| |h| ~| \nab^{1+k} q|(h)~ \|_{L^1(\R^2)} \leq \la^k
}
for some $\lambda \geq N \Xi$, and all $0 \leq |k| \leq D$.  Then the commutator $\left[\Ddt, Q \right] = \left[\pr_t + u_\epsilon \cdot \nab,Q\right]$ satisfies the estimates
\ALI{
 \left\| \nab^k \left[ \Ddt, Q\right] \right\|  &\leq C_k \Xi e_v^{1/2} \la^k, \qquad 0 \leq k \leq D - 1
}
as a bounded operator on $C^0(\R \times \T^2)$.
\end{lem}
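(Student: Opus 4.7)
The plan is to reduce to a spatial commutator, use the divergence-free condition to produce a favorable integral representation, and then estimate the resulting $x$-dependent kernel in $L^1_y$. Since $Q$ acts by spatial convolution, $\pr_t$ commutes with $Q$, so $[\Ddt, Q] = [u_\ep^j \pr_j, Q]$. Writing the spatial commutator explicitly,
\[
[u_\ep^j \pr_j, Q] f(x) = \int_{\R^2} \bigl[u_\ep^j(x) - u_\ep^j(x-h)\bigr] (\pr_j f)(x-h)\, q(h)\, dh,
\]
I would integrate by parts in $h$ using $\pr_j f(x-h) = -\pr_{h_j} f(x-h)$. The term in which $\pr_{h_j}$ lands on the velocity difference contributes $(\pr_j u_\ep^j)(x-h) = 0$ by divergence-freeness, so the surviving piece transfers the derivative entirely onto $q$, yielding
\[
[u_\ep^j \pr_j, Q] f(x) = \int_{\R^2} K(x, y)\, f(y)\, dy, \qquad K(x, y) := \bigl[u_\ep^j(x) - u_\ep^j(y)\bigr] (\pr_j q)(x-y).
\]
This identity is the key structural step: it represents $[u_\ep^j \pr_j, Q]$ as an integral operator whose kernel carries no derivatives on $f$, so the $C^0$-operator norm of $\nab_x^k [u_\ep^j \pr_j, Q]$ is controlled by $\sup_x \|\nab_x^k K(x, \cdot)\|_{L^1_y}$.

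Applying $\nab_x^k$ and distributing via Leibniz on the two $x$-dependent factors in $K$, the piece where all $k$ derivatives fall on $(\pr_j q)(x-y)$, with the velocity difference undifferentiated, is bounded using the Lipschitz estimate $|u_\ep(x) - u_\ep(y)| \leq \co{\nab u_\ep}\,|x-y|$ and $\co{\nab u_\ep} \leq C\Xi e_v^{1/2}$ from Lemma~\ref{lem:coarse:scale}, giving $C\Xi e_v^{1/2} \bigl\| |h| |\nab^{k+1} q(h)| \bigr\|_{L^1} \leq C\Xi e_v^{1/2} \la^k$ by hypothesis. For the remaining terms, where $|\b| \geq 1$ derivatives land on $u_\ep^j(x)$ or $u_\ep^j(y)$ and $|\gamma| := k - |\b|$ derivatives fall on $(\pr_j q)(x-y)$, the contribution is bounded by
\[
C \co{\nab^{|\b|} u_\ep} \cdot \|\nab^{|\gamma|+1} q\|_{L^1} \leq C\, \Xi^{|\b|} e_v^{1/2} N^{(|\b|-L)_+/L} \la^{|\gamma|+1},
\]
where the hypothesis $k \leq D-1$ ensures $|\gamma|+1 \leq D$ so that the kernel bound applies.

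The main technical obstacle is to verify that this $|\b| \geq 1$ contribution is still at most $C\Xi e_v^{1/2} \la^k$. Factoring out the target size, the question reduces to the inequality
\[
(\Xi/\la)^{|\b|-1} \cdot N^{(|\b|-L)_+/L} \leq 1.
\]
Since $\la \geq N\Xi$, the first factor is at most $N^{-(|\b|-1)}$. For $|\b| \leq L$ the second factor is $1$ and the bound is immediate. For $|\b| > L$, the combined exponent of $N$ equals $(|\b|-L)/L - (|\b|-1) = |\b|(1-L)/L$, which is $\leq 0$ whenever $L \geq 2$ and $|\b| \geq 1$. This is precisely where the assumption $L \geq 2$ in Lemma~\ref{lem:mainLemma} is used, and it is what absorbs the $N$-loss stemming from the higher derivatives of the mollified velocity. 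Summing over the finitely many Leibniz terms completes the proof.
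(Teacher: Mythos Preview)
Your proof is correct. The paper does not actually supply a proof of this lemma; it is stated and then used as a standard tool, so there is no original argument to compare against. Your approach---reducing to the spatial commutator, integrating by parts in $h$ to remove the derivative from $f$, invoking $\pr_j u_\ep^j = 0$ to eliminate the cross term, and then estimating the resulting kernel $K(x,y) = [u_\ep^j(x) - u_\ep^j(y)](\pr_j q)(x-y)$ via Leibniz in $x$---is the standard one. The verification that the $|\b|\geq 1$ Leibniz pieces are dominated by the target bound, using $\la \geq N\Xi$ together with $L \geq 2$ to absorb the $N^{(|\b|-L)_+/L}$ loss from high derivatives of $u_\ep$, is carried out cleanly and is exactly the mechanism the paper relies on implicitly.
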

In fact, the above lemma will only be applied to operators $Q$ for which $\lambda$ is given as in \eqref{eq:lambdaDef}.

\begin{lem}[Transport Estimates]
\label{lem:transport:estimates}
Let $L\geq 2$, and denote by $\Ddt$ the convective derivative associated to the flow $u_\ep$. The phase gradients $\nabla \xi_I$ obey the bound
\ali{
\|\nabla^k \left( \Ddt \right)^{r} \nabla \xi_I\|_{C^0} \leq C_k \Xi^k (\Xi e_v^{1/2})^r N^{(k+(r-1)_+ + 1 - L)_+/L} 
\label{eq:transport:estimates}
}
for all $k \geq 1$ and $r \in \{0,1,2\}$. Moreover, the same bound holds if $\nabla^k (\Ddt)^r$ is replaced by $D^{(k,r)}$, where the latter is defined by
\ali{
D^{(k,r)} = \nabla^{k_1} \left(\Ddt\right)^{r_1} \nabla^{k_2} \left(\Ddt\right)^{r_2} \nabla^{k_3},
}
with $k_1+k_2+k_3=k$, $k_i \geq 0$, $r_1+r_2=r$, and $r_i \geq 0$.

We also have the estimate
\ali{
| \nab \xi_I( &\Phi_s(t,x)) - \nab \hat{\xi}_I(x) | \leq C b \label{eq:closeToInitDat} \\
b &= B_\la^{-1/2} \left( \fr{e_v^{1/2}}{e_R^{1/2} N} \right)^{1/2} = \tau \Xi e_v^{1/2} \notag
}
where $\Phi_s$ is the coarse scale flow defined in~\ref{defn:coarseScaleFlow}, and $\tau$ is specified as in line~\eqref{eq:tau:def}.
\end{lem}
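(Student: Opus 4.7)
The plan is to deduce every bound from the transport equation~\eqref{eq:transportEqn}, combined with the estimates on the mollified velocity $u_\ep$ furnished by Lemma~\ref{lem:coarse:scale}. I would organize the proof around the identity $\Ddt (\pr^l \xi_I) = -(\pr^l u_\ep^j)(\pr_j \xi_I)$, which is obtained by differentiating~\eqref{eq:transportEqn}, and the fact that $\nabla \hat\xi_I$ is constant with $|\nabla \hat\xi_I|\leq 10 |\xi^{(1)}|$.

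First I would prove the closeness bound~\eqref{eq:closeToInitDat}. Along a trajectory of $\Phi_s$, the function $Y^l(s) = \pr^l \xi_I(\Phi_s(t(I),x))$ satisfies the linear ODE $\tfrac{d}{ds}Y^l = -(\pr^l u_\ep^j)(\Phi_s)\, Y_j$ with initial value $\nabla \hat\xi_I(x)$. By Lemma~\ref{lem:coarse:scale}, $\|\nabla u_\ep\|_{C^0} \leq C\Xi e_v^{1/2}$, so Gr\"onwall gives
\[
|Y(s) - \nabla \hat\xi_I(x)| \;\leq\; |\nabla \hat\xi_I|\bigl(e^{C\Xi e_v^{1/2}|s|} - 1\bigr) \;\leq\; C\,\Xi e_v^{1/2}\,|s|
\]
for $|s| \leq \tau$, since the exponent $\tau\,\Xi e_v^{1/2} = b \leq 1$ (we pick $B_\la$ large enough). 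This is precisely the bound $Cb$ of~\eqref{eq:closeToInitDat}, and in particular it ensures~\eqref{eq:keepPhaseGradientClose} so the microlocal decomposition of Section~\ref{sec:shapeOfCorrections} applies.

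Next I would prove~\eqref{eq:transport:estimates} by induction on the number of derivatives. For the pure spatial case $r=0$: applying $\nabla^k$ to the identity $\Ddt \nabla \xi_I = -(\nabla u_\ep)^{\!\top}\!\nabla \xi_I$ and using the Leibniz rule yields a linear transport equation for $\nabla^{k+1}\xi_I$ whose forcing is a sum of products $\nabla^{k_1+1}u_\ep \cdot \nabla^{k_2+1}\xi_I$ with $k_1+k_2=k$. Integrating along $\Phi_s$ from $t(I)$ (where all derivatives of order $\geq 1$ vanish since $\hat\xi_I$ is linear) and invoking~\eqref{eq:mollified:u:pure:gradients} together with the induction hypothesis, Gr\"onwall produces the claimed bound $C_k \Xi^k N^{(k-L)_+/L}$, where the cost $N^{1/L}$ appears precisely when $k\geq L$ because this is when derivatives begin to fall on the mollifier. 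For $r=1$ one uses the transport equation itself to replace $\Ddt\nabla\xi_I$ by $-(\nabla u_\ep)^{\!\top}\!\nabla\xi_I$ and then applies $\nabla^k$, recovering the extra factor $\Xi e_v^{1/2}$; note that no new mollification loss appears as long as $k+1\leq L$, which is exactly $(r-1)_+ = 0$ in the exponent. For $r=2$, apply $\Ddt$ to the expression $-(\nabla u_\ep)^{\!\top}\!\nabla\xi_I$; using $[\Ddt,\nabla] = -\nabla u_\ep\cdot\nabla$ one rewrites $(\Ddt)^2\nabla\xi_I$ in terms of $\nabla(\Ddt u_\ep)$, $\nabla u_\ep$, and $\nabla\xi_I$, and the estimate~\eqref{eq:mollified:u:convective:gradients} provides the additional factor $N^{1/L}$ anticipated by the $(r-1)_+ = 1$ contribution to the exponent.

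Finally, the bound for the mixed operators $D^{(k,r)}$ follows by pushing all $\nabla$'s to the right past $\Ddt$, using repeatedly the identity $[\Ddt,\nabla] = -(\nabla u_\ep)\cdot\nabla$. Each commutator produces a term in which one derivative is transferred onto $u_\ep$, and the already-established bounds on pure $\nabla^k(\Ddt)^r\nabla\xi_I$ combined with Lemma~\ref{lem:coarse:scale} control each such term by the same majorant. The main technical obstacle is bookkeeping the mollification exponent $N^{(k+(r-1)_++1-L)_+/L}$: one must verify that whenever a derivative hits $u_\ep$ past order $L$, the corresponding factor $N^{1/L}$ is accounted for, and that the conversion of the first $\Ddt$ into a spatial derivative through the transport equation costs nothing, while subsequent $\Ddt$'s incur the additional $(r-1)_+$ shift. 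All other ingredients are standard Gr\"onwall-plus-induction arguments on the linear transport structure.
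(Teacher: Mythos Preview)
Your proof is correct and follows the same approach as the paper: both derive everything from the differentiated transport identity $\Ddt\nabla\xi_I = -(\nabla u_\ep)^\top\nabla\xi_I$, handle $r=0$ by Gr\"onwall and induction, $r=1$ by direct substitution, and $r=2$ by explicitly computing $(\Ddt)^2\nabla\xi_I$ to see that only a single advective derivative of $u_\ep$ appears (invoking \eqref{eq:mollified:u:convective:gradients}). The only differences are cosmetic---the paper derives \eqref{eq:closeToInitDat} last by integrating the $r=1,\,k=0$ bound along the flow rather than by a separate Gr\"onwall---and there is a minor slip in your $r=0$ exponent, which should read $N^{(k+1-L)_+/L}$, consistent with your own remark that the cost first appears when $k\geq L$.
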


\begin{proof}
In order to establish \eqref{eq:transport:estimates} for $r=0$, one appeals to \eqref{eq:transportForPhaseGrad}, and obtains
\ali{
(\pr_t + u_\ep^j \pr_j) \nabla^k \pr^l \xi_I &= - \nabla^k(\pr^l u_\ep^j \pr_j \xi_I) + [u_\ep^j \pr_j, \nabla^k] \pr^l \xi_I.
}
The bound for $r=0$ then follows from the Gr\"onwall inequality in the above identity, estimate \eqref{eq:mollified:u:pure:gradients}, and the choice for $\tau$ in \eqref{eq:tau:def}. Similarly, from 
\ali{
\nabla^k  (\pr_t + u_\ep^j \pr_j) \pr^l \xi_I &= - \nabla^k(\pr^l u_\ep^j \pr_j \xi_I)
}
and \eqref{bound:nabkth} we obtain the estimate \eqref{eq:transport:estimates} with $r=1$.

Lastly, in order to obtain the desired estimate when $r=2$ we note that
\ali{
\left( \Ddt \right)^2 \pr^l \xi_I = (\pr_t + u_\ep^i \pr_i)^2 \pr^l \xi_I
&= - (\pr_t + u_\ep^i \pr_i) (\pr^l u_\ep^j \; \pr_j \xi_I)\\
&= - \pr_j \xi_I (\pr_t + u_\ep^i \pr_i) (\pr^l u_\ep^j) - \pr^l u_\ep^j  (\pr_t + u_\ep^i \pr_i) (\pr_j \xi_I) \\
&= - \pr_j \xi_I \pr^l (\pr_t + u_\ep^i \pr_i) u_\ep^j  +2 \pr^l u_\ep^j  \; \pr_j u_\ep^i \pr_i \xi_I.
\label{eq:second:D:grad:xi}
}
In particular, it is important that the second convective derivative of $\nabla \xi_I$ only depends on a single convective derivative of $u_\ep$. By appealing to Lemma~\ref{lem:coarse:scale}, from  \eqref{eq:second:D:grad:xi} we obtain that
\ali{
\| \left(  \Ddt \right)^2 \nabla \xi_I \|_{C^0} \leq C \Xi^2 e_v.
}
The bound \eqref{eq:transport:estimates} with $r=2$ and $k \geq 1$, similarly follows from \eqref{eq:second:D:grad:xi}, the Leibniz rule, Lemma~\ref{lem:coarse:scale}, and estimate \eqref{eq:transport:estimates} with $r=0$.

The estimate \eqref{eq:closeToInitDat} follows from the bound \eqref{eq:transport:estimates} with $k = 0$ and $r = 1$, from the calculation
\ALI{
 | \nab \xi_I( \Phi_s(t,x)) - \nab \hat{\xi}_I(x) | &\leq \int_0^s \Big| \Ddt \nab \xi_I( \Phi_\si(t,x) ) \Big | d\si \\
&\leq C |s| \Xi e_v^{1/2} \leq C b, \qquad \tx{if } |s| \leq \tau
}
\end{proof}

\begin{lem}[Principal Amplitude estimates]\label{lem:principAmpEst}
Let $L\geq 2$ and $\tau$ be defined in \eqref{eq:tau:def}. Then the principal parts of the scalar amplitude $\theta_I$, and the velocity amplitude $u_I$, obey the bounds
\ali{
\| D^{(k,r)} \theta_I \|_{C^0} + \| D^{(k,r)} u_I \|_{C^0} &\leq C_k \Xi^k e_R^{1/2} \tau^{-r} N^{(k+1-L)_+/L} \label{eq:princAmpEst}
}
for all $k\geq 0$ and $r\in\{0,1,2\}$, for some suitable universal constants $C_k >0$.
\end{lem}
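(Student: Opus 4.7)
The plan is to reduce the estimates for $\theta_I = e^{1/2}(t)\eta_k(t)\gamma$ and $u_I^l = \theta_I\, m^l(\nabla \xi_I)$ to the bounds already established in Lemma~\ref{lem:coarse:scale}, Lemma~\ref{lem:transport:estimates}, the mollification estimate \eqref{eq:tilde:cA:bounds}, and the hypothesis \eqref{ineq:goodEnergy} on $e^{1/2}$, via Leibniz and Fa\`a di Bruno. The arithmetic of parameters is organized around the observation that by our choice of lifespan \eqref{eq:tau:def} and the hypothesis $N \geq (e_v/e_R)^{3/2}$ of~\eqref{eq:conditionsOnN2}, we have
\[
\Xi e_v^{1/2} \leq C \tau^{-1}, \qquad N\Xi e_R^{1/2} \leq C\tau^{-2},
\]
so that every convective-derivative cost appearing in \eqref{eq:tilde:cA:bounds} and \eqref{ineq:goodEnergy} is absorbed into a single factor of $\tau^{-1}$ per $\Ddt$.

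First I would treat the purely time-dependent prefactor $e^{1/2}(t)\eta_k(t)$. Since $\eta_k(t) = \eta((t-k\tau)/\tau)$ satisfies $\|\partial_t^r \eta_k\|_{C^0}\leq C\tau^{-r}$, and since \eqref{ineq:goodEnergy} combined with the display above yields $\|\partial_t^r e^{1/2}\|_{C^0}\leq C\tau^{-r} e_R^{1/2}$ for $0\leq r\leq 2$, the Leibniz rule gives
\[
\|\partial_t^r(e^{1/2}(t)\eta_k(t))\|_{C^0}\leq C_r \tau^{-r} e_R^{1/2}, \qquad 0\leq r\leq 2.
\]
Spatial derivatives of this factor vanish, so applying any operator $D^{(k,r)}$ to $e^{1/2}(t)\eta_k(t)$ costs only $\tau^{-r}e_R^{1/2}$ provided $k=0$; otherwise these time-only factors contribute nothing beyond what we already have.

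Next I would estimate $\gamma=(1+\varepsilon)^{1/2}$, where $\varepsilon = (\tilde c_A + c_J)/e(t)$. The bound \eqref{eq:whichK0weneed} $\|\varepsilon\|_{C^0}\leq 1/2$ places $\varepsilon$ in a region where $\Phi(z)=(1+z)^{1/2}$ is smooth with uniformly bounded derivatives of all orders. Using the estimates \eqref{eq:tilde:cA:bounds} for $\tilde c_A$ and $c_J$, together with the lower bound $e(t)\geq K e_R$ from \eqref{eq:lowBoundEoftx} and the bound $\|\partial_t^r e^{1/2}\|_{C^0}\leq C\tau^{-r} e_R^{1/2}$, one obtains
\[
\|D^{(k,r)}\varepsilon\|_{C^0}\leq C_k \Xi^k \tau^{-r} N^{(k+1-L)_+/L}, \qquad 0\leq r\leq 2.
\]
Fa\`a di Bruno then expresses $D^{(k,r)}\gamma$ as a finite sum of products $\Phi^{(j)}(\varepsilon)\prod_{\ell} D^{(k_\ell,r_\ell)}\varepsilon$ with $\sum k_\ell = k$, $\sum r_\ell = r$; concentrating all derivatives on a single factor dominates, and the convexity inequality $\sum_\ell (k_\ell + 1 - L)_+\leq (k+1-L)_+ + C$ yields the same bound as for $\varepsilon$ itself. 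Combining with the time-only prefactor via Leibniz gives \eqref{eq:princAmpEst} for $\theta_I$.

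Finally, for $u_I^l = \theta_I m^l(\nabla \xi_I)$, the bound \eqref{eq:closeToInitDat} of Lemma~\ref{lem:transport:estimates} guarantees $\nabla \xi_I$ stays in a fixed compact neighbourhood of $\pm 10^{[k]}\xi^{(1)}$ where $m^l$ is smooth (by our hypothesis that $m$ is smooth in a neighbourhood of the $\pm\xi^{(i)}$). Applying Fa\`a di Bruno to $m^l(\nabla \xi_I)$ and inserting the transport estimate \eqref{eq:transport:estimates}, one gets
\[
\|D^{(k,r)} m^l(\nabla \xi_I)\|_{C^0}\leq C_k \Xi^k\tau^{-r} N^{(k+1-L)_+/L}
\]
by the same convexity argument on the $(k_\ell+1-L)_+$ exponents (here using again that $\Xi e_v^{1/2}\leq C\tau^{-1}$ and $(\Xi e_v^{1/2})^2\leq C\tau^{-2}$ absorb the convective costs in \eqref{eq:transport:estimates}). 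A final Leibniz expansion of $D^{(k,r)}(\theta_I m^l(\nabla\xi_I))$ then yields \eqref{eq:princAmpEst} for $u_I$. The main technical point to watch is the combinatorial bookkeeping on $N^{(k+1-L)_+/L}$ across the Fa\`a di Bruno expansions; this is handled exactly as in the analogous amplitude estimates of~\cite{isett}, so I would simply cite that reference rather than reproduce the arithmetic.
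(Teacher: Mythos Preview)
Your proposal is correct and follows essentially the same route as the paper's proof: reduce to the building blocks $e^{1/2}(t)\eta_k(t)$, $\gamma=(1+\varepsilon)^{1/2}$, and $m^l(\nabla\xi_I)$ via Leibniz and Fa\`a di Bruno, invoking \eqref{eq:tilde:cA:bounds}, \eqref{ineq:goodEnergy}, and Lemma~\ref{lem:transport:estimates}.

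One small correction to your parameter arithmetic: the displayed inequality $N\Xi e_R^{1/2}\leq C\tau^{-2}$ is not true in general (since $N\Xi e_R^{1/2}=B_\la^{-1}\tau^{-2}/(\Xi e_v^{1/2})$ and there is no lower bound on $\Xi e_v^{1/2}$), and consequently the second advective derivative on $\tilde c_A$ does \emph{not} individually cost at most $\tau^{-1}$. What is true, and all that is needed, is the \emph{product} identity $(\Xi e_v^{1/2})(N\Xi e_R^{1/2})=B_\la^{-1}\tau^{-2}\leq \tau^{-2}$, which is exactly how the paper phrases it. This suffices because in every Leibniz or Fa\`a di Bruno term with total $r=2$, the advective derivatives either both land on a single $\varepsilon$-factor (giving the product bound) or split as $1+1$ across factors (giving $(\Xi e_v^{1/2})^2\leq\tau^{-2}$). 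With that adjustment your argument goes through unchanged.
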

\begin{proof}
First, we note that in view of \eqref{eq:uIexpression} we have
$
u_I^l = \theta_I m^l(\nabla \xi_I).
$
Since the multiplier $m$ is smooth outside the origin and in view of Lemma~\ref{lem:transport:estimates} we have bounds for the derivatives of $\nabla \xi_I$, the bound on $u_I$ follows from that on $\theta_I$, up to possibly increasing the constant $C_K$ by a constant factor.

From \eqref{eq:varepTerm} and \eqref{eq:formForThi} we recall that 
\ali{
\theta_I = \eta\left( \frac{t-k\tau}{\tau} \right) e(t)^{1/2} \gamma = \eta\left( \frac{t-k\tau}{\tau} \right) e(t)^{1/2} \left(1 + \varepsilon \right)^{1/2} \label{eq:expandThI}
} 
where $\varepsilon = (\tilde{c}_A + c_J)/e(t)$.  Using \eqref{eq:tilde:cA:bounds}, the lower bound $e(t) \geq K_0 e_R$ and \eqref{eq:whichK0weneed}, we obtain the following estimates for $\varepsilon$ and $\gamma = (1 + \varepsilon)^{1/2}$
\begin{align}
\co{ \nabla^k \left(\Ddt \right)^r \varepsilon } + \co{\nabla^k \left( \Ddt \right)^r \ga}  &\leq C_k \Xi^k e_R (\Xi e_v^{1/2} )^{(r\geq 1)} (N \Xi e_R^{1/2})^{(r\geq 2)} N^{(k+1-L)_+/L}.
\label{eq:gaVarep:bounds}
\end{align}
The bounds for spatial derivatives of $\th_I$ now follow from \eqref{eq:gaVarep:bounds} since the other terms $\eta\left( \frac{t-k\tau}{\tau} \right) $ and $e^{1/2}(t)$ do not depend on $x$.  Lemma~\ref{lem:principAmpEst} requires us also to show that that each advective derivative up to order $2$ costs at most $C \tau^{-1}$ per derivative.  For the time cutoff and the function $e^{1/2}(t)$ in \eqref{eq:expandThI}, the cost of $\tau^{-1}$ follows by definition for the time cutoff and by \eqref{ineq:goodEnergy} for $e^{1/2}(t)$ using the inequality $\Xi e_v^{1/2} \leq \tau^{-1}$ from the choice of $\tau$ in Section~\ref{sec:lifespanChoice}.  For the other terms, the estimate \eqref{eq:gaVarep:bounds} tells us that the first advective derivative costs $\Xi e_v^{1/2} \leq \tau^{-1}$, and taking two advective derivatives costs
\[ \left| \DDdt \right| \leq C (\Xi e_v^{1/2})(N \Xi e_R^{1/2}) = C N \Xi^2 e_v^{1/2} e_R^{1/2} = C \tau^{-2} \]
from the choice of $\tau$ in \eqref{eq:tau:def}.  The bounds for the spatial derivatives then follow from the pattern in \eqref{eq:gaVarep:bounds}.

\end{proof}

\begin{lem}[Amplitude correction estimates]\label{lem:ampCorrectEst}  Under the hypotheses of Lemma~\ref{lem:principAmpEst}, the corrections $\de \th_I$ and $\de u_I^l$ to the scalar amplitude and the velocity amplitude obey the bounds
\ali{
\| D^{(k,r)} \de \theta_I \|_{C^0} + \| D^{(k,r)} \de u_I \|_{C^0} &\leq C_k B_\la^{-1} N^{-1} \Xi^k e_R^{1/2} \tau^{-r} N^{(k+2-L)_+/L} \label{eq:correctionBoundsAre}
}
for all $k\geq 0$ and $r\in\{0,1,2\}$, for some suitable universal constants $C_k >0$.
\end{lem}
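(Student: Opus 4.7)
The plan is to apply the Microlocal Lemma~\ref{lem:microlocal} to each of the two convolution operators $T_1 = P_{\approx\la}^I$ (whose output defines $\Th_I$) and $T_2 = T^l\circ P_{\approx\la}^I$ (whose output defines $U_I^l$). Their Fourier multipliers are $\hat\eta^I_{\approx\la}(\xi)$ and $m^l(\xi)\hat\eta^I_{\approx\la}(\xi)$, both smooth and compactly supported in a ball of radius $\la|\xi^{(1)}|/2$ centered at $\pm 10^{[k]}\la\xi^{(1)}$ (where $m^l$ is smooth away from the origin). Hence their convolution kernels $K_1$ and $K_2^l$ are Schwartz functions of spatial scale $\la^{-1}$, obeying the weighted bounds
\[
\int_{\R^2}|h|^j\,|\nab^k K_\alpha(h)|\,dh \leq C_{j,k}\,\la^{k-j},\qquad \alpha\in\{1,2\},
\]
for all $j,k\geq 0$, with constants depending only on $T^l$.

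Applied to the input $e^{i\la\xi_I}\th_I$, Lemma~\ref{lem:microlocal} yields the explicit representation
\[
\de\th_I(x) = \int_0^1\!dr\,\frac{d}{dr}\!\int_{\R^2} e^{-i\la\nab\xi_I(x)\cdot h}\,e^{iZ(r,x,h)}\,\th_I(x-rh)\,K_1(h)\,dh,
\]
and an identical formula for $\de u_I^l$ with $K_1$ replaced by $K_2^l$. Performing the $r$-differentiation under the integral produces exactly two types of contributions: one proportional to $h^j\,\pr_j\th_I(x-rh)$ and one proportional to $\la\,h^a h^b\,(\pr_a\pr_b\xi_I)(x-sh)\,\th_I(x-rh)$. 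Using Lemma~\ref{lem:principAmpEst} to bound $\|\nab\th_I\|_{C^0}\leq C\Xi e_R^{1/2}$ and Lemma~\ref{lem:transport:estimates} to bound $\|\nab^2\xi_I\|_{C^0}\leq C\Xi$, and absorbing the explicit factors of $|h|$ into the weighted $L^1$ decay of the kernel at scale $\la^{-1}$, both contributions are controlled by $C\la^{-1}\Xi e_R^{1/2} = C B_\la^{-1}N^{-1} e_R^{1/2}$, which is the case $k=r=0$ of \eqref{eq:correctionBoundsAre}.

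For higher pure spatial derivatives $\nab^k$ I differentiate under the integral: the derivatives land on $\th_I(x-rh)$ (costing $\Xi$ each via Lemma~\ref{lem:principAmpEst}, or $\Xi N^{1/L}$ past order $L-1$), on $e^{-i\la\nab\xi_I(x)\cdot h}$ and $e^{iZ}$ (producing factors $\la|h|$ or $\la|h|^2\|\nab^2\xi\|$ that are absorbed by the Schwartz decay of $K_\alpha$), or on $\xi_I$ inside $Z$ (costing $\Xi$ via Lemma~\ref{lem:transport:estimates}). Because the formula for $\de\th_I$ already carries one built-in spatial derivative on $\th_I$, or equivalently two on $\xi_I$, the exponent shifts from $(k+1-L)_+/L$ in Lemma~\ref{lem:principAmpEst} to $(k+2-L)_+/L$, matching \eqref{eq:correctionBoundsAre}. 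For advective derivatives I use the commutator expansion $\Ddt\,T_\alpha = T_\alpha\,\Ddt + [\Ddt,\,T_\alpha]$: on the first term, $\Ddt$ annihilates $e^{i\la\xi_I}$ by \eqref{eq:transportEqn} and falls on $\th_I$, costing $\tau^{-1}$ by Lemma~\ref{lem:principAmpEst}; on the commutator term, Lemma~\ref{lem:commutatorEstimates} contributes $\Xi e_v^{1/2}\leq\tau^{-1}$. The second advective derivative is handled identically, relying on the observation in Lemma~\ref{lem:transport:estimates} that $(\Ddt)^2\nab\xi_I$ depends on only a single advective derivative of $u_\ep$, which keeps the cost at $\tau^{-2}$.

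The main subtlety is bookkeeping: after applying $D^{(k,r)}$ the integrand expands into many terms via the chain and Leibniz rules, and one must verify that each such term respects the common bound in \eqref{eq:correctionBoundsAre}. The crucial mechanism throughout is that every power of $\la$ generated by differentiating $e^{-i\la\nab\xi_I\cdot h}$ or $e^{iZ}$ in $x$ is paired with an extra power of $|h|$ in the integrand, so that the Schwartz decay of $K_\alpha$ at scale $\la^{-1}$ absorbs it without loss; the single net power of $\la^{-1}$ that survives—coming from the $d/dr$ in the Microlocal Lemma formula—is precisely what produces the overall prefactor $B_\la^{-1}N^{-1}$ that distinguishes the correction $\de\th_I$ from the principal amplitude $\th_I$.
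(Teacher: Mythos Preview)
Your treatment of the case $r=0$ and of pure spatial derivatives is correct and essentially matches the paper: you differentiate the explicit Microlocal Lemma formula, pair every stray power of $\la$ with an extra factor of $|h|$, and absorb it into the Schwartz decay of the rescaled kernel. The shift in the $N$-exponent from $(k+1-L)_+/L$ to $(k+2-L)_+/L$ is accounted for correctly.

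The gap is in your handling of advective derivatives. You propose to write $\Ddt \de\th_I = e^{-i\la\xi_I}T_\alpha[e^{i\la\xi_I}\Ddt\th_I] - \Ddt\th_I + e^{-i\la\xi_I}[\Ddt,T_\alpha](e^{i\la\xi_I}\th_I)$. The first two terms combine to a ``$\de$-type'' expression for the input $\Ddt\th_I$, which is fine and carries the gain $\la^{-1}\Xi = B_\la^{-1}N^{-1}$. But the commutator term does not: Lemma~\ref{lem:commutatorEstimates} only gives $\|[\Ddt,T_\alpha](e^{i\la\xi_I}\th_I)\|_{C^0}\leq C\Xi e_v^{1/2}e_R^{1/2}$, with no factor of $B_\la^{-1}N^{-1}$. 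Since the target bound \eqref{eq:correctionBoundsAre} at $k=0$, $r=1$ is $C B_\la^{-1}N^{-1}e_R^{1/2}\tau^{-1}$, and one checks from \eqref{eq:tau:def} that $\Xi e_v^{1/2}e_R^{1/2}$ is \emph{larger} than $B_\la^{-1}N^{-1}e_R^{1/2}\tau^{-1}$ by a factor of $B_\la^{1/2}N^{1/2}(e_v/e_R)^{1/4}\gg 1$, the commutator route as you describe it cannot close.

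The paper instead differentiates the explicit integral formula for $\de\th_I$ directly. When $\Ddt$ hits a factor like $\th_I(x-rh)$ or $\pr_a\pr_b\xi_I(x-sh)$ inside the integrand, one writes $(\pr_t + u_\ep(x)\cdot\nab_x)\th_I(x-rh) = (\Ddt\th_I)(x-rh) + (u_\ep(x)-u_\ep(x-rh))\cdot\nab\th_I(x-rh)$ and Taylor-expands the velocity difference to produce an extra factor of $|h|$ (paired with $\co{\nab u_\ep}\leq \Xi e_v^{1/2}$). This keeps the original $\la^{-1}$ gain from the formula intact while costing only $\Xi e_v^{1/2}\leq\tau^{-1}$ for the advective derivative. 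The same trick handles the terms where $\Ddt$ hits $e^{-i\la\nab\xi_I(x)\cdot h}$ or $e^{iZ}$. If you want to fix your argument, this velocity-approximation device is the missing ingredient; the abstract commutator bound of Lemma~\ref{lem:commutatorEstimates} is too blunt here.
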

\begin{proof}  These estimates are obtained by explicitly differentiating the formulas for $\de \th_I$ and $\de u_I^l$ coming from the Microlocal Lemma, Lemma~\ref{lem:microlocal}.  Here we carry out the calculation for the case of $\de \th_I$, since the term $\de u_I^l$ can be treated in the same way.  Recall that
\[ \Th_I = P_{\approx \la}^I( e^{i \la \xi_I} \th_I) = e^{i \la \xi_I}( \th_I + \de \th_I ) \]
Applying Lemma~\ref{lem:microlocal} with $K(h) = \eta_{\approx \la}^I(h)$, we have the following formula for $\de \th_I$
\ali{
\de \th_I &= \de \th_{I,1} - \de \th_{I,2} \notag \\
\de \th_{I, 1} &= \int_0^1 dr \int e^{-i \la \nab \xi_I(x) \cdot h} e^{i Z(r,x,h)} (i \la) \left[\int_0^1 h^a h^b \pr_a\pr_b \xi_I(x-sh) (1-s) ds\right] \th_I(x - rh) \eta_{\approx \la}^I(h) dh \label{eq:deThIone} \\
\de \th_{I,2} &=  \int_0^1 dr \int e^{-i \la \nab \xi_I(x) \cdot h} e^{i Z(r,x,h)} \pr_a \th_I(x - rh) h^a \eta_{\approx \la}^I(h) dh
}
with $Z(r,x,h) = r\la \int_0^1 h^a h^b \pr_a\pr_b \xi(x-sh) (1-s) ds$ and where $\eta_{\approx \la}^I$ is defined after line \eqref{eq:ThIisLike}.  In particular, recall that the kernel $\eta_{\approx \la}^I(h) = 10^{2[k]}\la^{2} \eta_{\approx 1}(\pm 10^{[k]} \la h)$ is constructed by rescaling a Schwartz kernel by a factor $\la$, and therefore satisfies the estimates
\ali{
 \| |h|^m \eta_{\approx \la}^I \|_{L^1_h} &\leq C_m \la^{-m} \label{ineq:rescaledKerBounds}
}
Combining the estimate \eqref{ineq:rescaledKerBounds} with the bounds of Lemma~\ref{lem:principAmpEst} and Lemma~\ref{lem:transport:estimates} gives the $C^0$ estimate for $\nab^k \de \th_I$.

Proving estimates for advective derivatives of $\de \th_I$ is tedious, but straightforward.  To ease notation let us write $Z(r,x,h) = r \la h^a h^b Z_{ab}$ where
\[ Z_{ab} = Z_{ab}(t,x,h) =  \int_0^1 \pr_a\pr_b \xi_I(x-sh) (1-s) ds \]
  We will sketch one example and estimate the advective derivative of the term in \eqref{eq:deThIone}.
\ali{
(\pr_t &+ u_\ep \cdot \nab) \de \th_{I,1}(t,x) = - i T_{(1)} + i  T_{(2)} + T_{(3)} + T_{(4)} \\
T_{(1)} &=  \int_0^1 dr \int e^{-i \la \nab \xi_I(x) \cdot h} e^{i Z} (i \la)  h^a h^b Z_{ab} \th_I(x - rh) \Ddt \pr_c \xi_I(x) \la h^c \eta_{\approx \la}^I(h) dh \label{eq:easiestAdvecMicrolocal} \\
T_{(2)} &= \int_0^1 dr \int e^{-i \la \nab \xi_I(x) \cdot h} e^{i Z} (i \la) h^a h^b Z_{ab} \th_I(x - rh) r \left(\pr_t + u_\ep^i(x) \fr{\pr}{\pr x^i}\right) Z_{ab} \la h^a h^b \eta_{\approx \la}^I(h) dh \label{eq:secondAdvecMicroloc} \\
T_{(3)} &=  \int_0^1 dr \int e^{-i \la \nab \xi_I(x) \cdot h} e^{i Z} (i \la) h^a h^b  \left(\pr_t + u_\ep^i(x) \fr{\pr}{\pr x^i}\right)Z_{ab} \th_I(x - rh) \eta_{\approx \la}^I(h) dh \\
T_{(4)} &= \int_0^1 dr \int e^{-i \la \nab \xi_I(x) \cdot h} e^{i Z} (i \la) h^a h^b Z_{ab} \left(\pr_t + u_\ep^i(x) \fr{\pr}{\pr x^i}\right) \th_I(x - rh) \eta_{\approx \la}^I(h) dh \label{eq:lastAdvecMicrolocal}
}
The pattern we observe in \eqref{eq:easiestAdvecMicrolocal}-\eqref{eq:lastAdvecMicrolocal} is that the cost of the first advective derivative is given by $\Xi e_v^{1/2}$ for every term.  This cost is most clear for the term \eqref{eq:easiestAdvecMicrolocal}.  The advective derivative brings down one term of size
\[ \co{\Ddt \nab \xi_I } \leq C \Xi e_v^{1/2} \]
and also introduces the factor $\la h^c$.  The $\la$ and the $h$ cancel out in terms of the estimate, since we gain a $\la^{-1}$ when we apply the bound
\[ \| h^a h^b h^c \eta_{\approx \la}^I(h) \|_{L^1_h} \leq C \la^{-3}  \]
for the kernel, which comes from scaling.

The terms \eqref{eq:secondAdvecMicroloc}-\eqref{eq:lastAdvecMicrolocal} require one more trick, which is to approximate the value of $u_\ep^i(x)$ with the nearby point in the integral.  For example, for the term $ \left(\pr_t + u_\ep^i(x) \fr{\pr}{\pr x^i}\right) \th_I(x - rh)$ in \eqref{eq:lastAdvecMicrolocal} we write
\ali{
\left(\pr_t + u_\ep^i(x) \fr{\pr}{\pr x^i}\right) \th_I(x - rh) &= \Ddt \th_I(x - rh) + ( u_\ep^i(x) - u_\ep^i(x - r h) ) \pr_i \th_I(x - r h)
}
The cost of $\Xi e_v^{1/2}$ for the advective derivative on $\th_I$ follows from Lemma~\ref{lem:principAmpEst}.  For the latter term, we write
\ali{
( u_\ep^i(x) - u_\ep^i(x - r h) ) \pr_i \th_I(x - r h) &= - r \int_0^1 \pr_c u_\ep^i( x - \si r h) d\si~ \pr_i \th_I(x - r h) h^c \label{eq:advecErrorTerm}
}
The term where $\pr_c u_\ep^i$ appears accounts for the cost of  $\co{\nab u_\ep} \leq \Xi e_v^{1/2}$.  The derivative hitting $\th_I$ costs a factor of $\Xi$, but this factor is regained by the factor $h^c$ that has appeared, which gains a $\la^{-1}$ when combined with the kernel as usual.  Repeating this observation many times for each one of \eqref{eq:secondAdvecMicroloc}-\eqref{eq:lastAdvecMicrolocal}, one obtains the first advective derivative bound in \eqref{eq:correctionBoundsAre}.  We omit the details.

One also has to take a second advective derivative in order to prove \eqref{eq:correctionBoundsAre}, giving rise to another long series of terms which obey the correct bounds.  We omit the proof of this estimate also, but we remark that one can avoid using these bounds during the course of the proof.  The only applications of these bounds are in Section~\ref{sec:AdvecBoundsStress} for a lower order part of the advective derivative of the transport term, and in this case one can substitute second order commutator estimates as in Lemma~\ref{lem:commutatorEstimates}, which are somewhat less tedious to write down.

\end{proof}

\begin{cor}\label{cor:tildeBounds} The bounds \eqref{eq:princAmpEst} of Lemma~\ref{lem:principAmpEst} hold also for $\tilde{\th}_I = \th_I + \de \th_I$ and for $\tilde{u}_I^l = u_I^l + \de u_I^l$.
\end{cor}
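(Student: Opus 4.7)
The plan is to deduce Corollary~\ref{cor:tildeBounds} directly from the triangle inequality, combined with Lemmas~\ref{lem:principAmpEst} and \ref{lem:ampCorrectEst}. Since $\tilde{\th}_I = \th_I + \de \th_I$ and $\tilde{u}_I^l = u_I^l + \de u_I^l$, it suffices to show that the correction bound in \eqref{eq:correctionBoundsAre} is dominated, up to an absolute constant, by the principal bound \eqref{eq:princAmpEst}. In other words, the only thing to check is the elementary inequality
\[
B_\la^{-1} N^{-1}\, \Xi^k e_R^{1/2} \tau^{-r} N^{(k+2-L)_+/L} \;\leq\; C \cdot \Xi^k e_R^{1/2} \tau^{-r} N^{(k+1-L)_+/L},
\]
for $0 \leq k \leq L$ and $r \in \{0,1,2\}$, with $C$ a universal constant.

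After cancelling the common factors this reduces to verifying
\[
N^{-1 + (k+2-L)_+/L - (k+1-L)_+/L} \;\leq\; C B_\la,
\]
which, since $B_\la \geq 1$, follows once we know the exponent is non-positive. A short case analysis on $a := k+1-L$ gives $(k+2-L)_+ - (k+1-L)_+ \in \{0, 1\}$: the difference equals $1$ when $a \geq 0$, and equals $0$ in the two remaining cases ($a = -1$ and $a \leq -2$). Consequently the exponent is either $-1$ or $(1-L)/L$, both non-positive for $L \geq 2$ and $N \geq 1$. Thus $B_\la^{-1} N^{-1} N^{(k+2-L)_+/L} \leq N^{(k+1-L)_+/L}$, and the correction term is in fact strictly smaller than the principal one by a factor of at least $N^{-1}$.

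Adding the two bounds and absorbing the factor into the constants $C_k$ yields the claimed estimates for $\tilde{\th}_I$ and $\tilde{u}_I^l$. There is no real obstacle here; the corollary is essentially just the statement that the corrections produced by the Microlocal Lemma are lower order perturbations of the principal amplitudes at every order of derivative considered, which is precisely what Lemma~\ref{lem:ampCorrectEst} was designed to quantify.
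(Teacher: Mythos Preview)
Your argument is correct and is exactly the approach the paper intends: the corollary is stated without proof in the paper as an immediate consequence of Lemmas~\ref{lem:principAmpEst} and \ref{lem:ampCorrectEst}, and you have simply spelled out the triangle-inequality comparison of the two bounds. One minor remark: your stated range $0 \leq k \leq L$ is narrower than the ``all $k \geq 0$'' in \eqref{eq:princAmpEst}, but your case analysis on $a = k+1-L$ never uses $k \leq L$ and goes through verbatim for every $k \geq 0$, so you may as well drop that restriction.
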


\subsection{Estimates for the Corrections to the Scalar Field and Drift Velocity}
In this Subection, we obtain estimates for the corrections $\Th$ and $U^l = T^l[\Th]$ to the scalar field and drift velocity.  These bounds confirm that the estimates \eqref{eq:Vco}-\eqref{eq:matWco} of Lemma~\ref{lem:mainLemma} are satisfied.  As with the previous Lemmas~\ref{lem:coarse:scale}-\ref{lem:ampCorrectEst} and our choices of parameters, the results we obtain in this section are familiar from \cite[Section 22.1]{isett}.  In our setting, these estimates turn out to be a bit easier to check thanks to our use of frequency localizing projections.




\begin{prop}\label{prop:velocScalarCorrections}Under the hypotheses of Lemma~\ref{lem:principAmpEst}, the corrections $\Th_I$ and $U_I^l$ to the scalar field and the drift velocity satisfy
\ali{
\| D^{(k,r)} \Theta_I \|_{C^0} + \| D^{(k,r)} U_I \|_{C^0} &\leq C_k (B_\la N \Xi)^k \tau^{-r} e_R^{1/2} \label{eq:correctionEsts}
}
for $0 \leq r \leq 2$.
\end{prop}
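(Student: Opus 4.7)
The plan is to write $\Theta_I = e^{i\lambda \xi_I} \tilde\theta_I$ and $U_I^l = e^{i\lambda \xi_I} \tilde u_I^l$, with $\tilde\theta_I = \theta_I + \delta\theta_I$ and $\tilde u_I = u_I + \delta u_I$ already controlled via Corollary~\ref{cor:tildeBounds}, and then apply the Leibniz rule to distribute the derivatives in $D^{(k,r)}$. Two observations drive the argument. First, the advective derivative annihilates the phase: by the transport equation~\eqref{eq:transportEqn},
$$ (\partial_t + u_\epsilon \cdot \nabla) e^{i\lambda \xi_I} = i\lambda e^{i\lambda \xi_I}(\partial_t + u_\epsilon \cdot \nabla)\xi_I = 0, $$
so every $\bar D/\partial t$ falls entirely on the amplitude, costing $\tau^{-1}$ per application by Corollary~\ref{cor:tildeBounds}. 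Second, each spatial derivative landing on $e^{i\lambda\xi_I}$ produces a factor $i\lambda \nabla\xi_I$ of size $O(\lambda) = O(B_\lambda N \Xi)$ (using Lemma~\ref{lem:transport:estimates} to control $\nabla \xi_I$), whereas each spatial derivative on the amplitude or on a higher derivative of $\xi_I$ costs only $\Xi$ (or at worst $\Xi N^{1/L}$ past order $L$). Since $B_\lambda N\Xi \geq \Xi N^{1/L}$, the leading contribution in the Leibniz expansion is the term in which all $k$ spatial derivatives hit the exponential, giving size $\lambda^k \tau^{-r} e_R^{1/2}$, and all subleading terms are absorbed into $C_k(B_\lambda N\Xi)^k \tau^{-r} e_R^{1/2}$.

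For the pure operator $\nabla^k (\bar D/\partial t)^r$ this is immediate: first use the first observation to write $(\bar D/\partial t)^r(e^{i\lambda\xi_I}\tilde\theta_I) = e^{i\lambda\xi_I} (\bar D/\partial t)^r \tilde\theta_I$, then expand $\nabla^k$ via Leibniz into a sum over partitions $k = k_0 + k_1 + \dots + k_j$ where $k_0$ derivatives land on the exponential and the remaining groups produce factors like $\nabla^{k_1 + 1}\xi_I, \dots$ from the phase and $\nabla^{k_j}(\bar D/\partial t)^r \tilde\theta_I$ from the amplitude. Inserting the bounds of Lemma~\ref{lem:transport:estimates} and Corollary~\ref{cor:tildeBounds} and summing the finitely many terms yields the desired estimate \eqref{eq:correctionEsts}.

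For the general mixed operator $D^{(k,r)} = \nabla^{k_1}(\bar D/\partial t)^{r_1}\nabla^{k_2}(\bar D/\partial t)^{r_2}\nabla^{k_3}$, I would commute all spatial derivatives past the advective derivatives using the identity $[\bar D/\partial t, \partial_j] = - \partial_j u_\epsilon^i \, \partial_i$, whose cost $\|\nabla u_\epsilon\|_{C^0} \leq \Xi e_v^{1/2} \leq \tau^{-1}$ is exactly the price of trading a spatial derivative for an advective one. This is the same rearrangement used in the last sentence of Lemma~\ref{lem:transport:estimates}; it reduces $D^{(k,r)}$ to $\nabla^k(\bar D/\partial t)^r$ modulo strictly lower-order terms obeying the same estimate, and the pure-operator bound then closes the argument. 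The main obstacle is essentially bookkeeping: one must verify that the numerous Leibniz and commutator terms, together with the $N^{(k+j-L)_+/L}$ higher-derivative losses in the auxiliary estimates, are all absorbed into the single factor $(B_\lambda N \Xi)^k$. Since $B_\lambda, N \geq 1$ and every factor $\Xi$, $\tau^{-1}$, or $N^{1/L}$ that appears is dominated by the corresponding piece of the target bound, this absorption is routine.
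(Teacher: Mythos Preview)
Your proposal is correct and follows essentially the same approach as the paper: write $\Theta_I = e^{i\la\xi_I}\tilde\theta_I$, use the transport equation to see that $\Ddt$ annihilates the phase so that advective derivatives fall entirely on $\tilde\theta_I$, then apply the Leibniz rule for spatial derivatives with the dominant contribution coming from derivatives hitting $e^{i\la\xi_I}$. The paper's proof is a brief outline of exactly this strategy, and it also mentions (as an alternative) the commutator route you invoke for the general mixed operator $D^{(k,r)}$.
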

\begin{proof}  We outline the proof of \eqref{eq:correctionEsts} for $\Th_I$, as the velocity field $U_I$ can be treated in the same way.  Here we recall again that 
\[ \Th_I = P_{\approx \la}^I [ e^{i \la \xi_I} \th_I ] = e^{i \la \xi_I} \tilde{\th}_I \]
For $r = 0$, the estimates for $\nab^k \Th_I$ follow from the bound $\co{\th_I} \leq C e_R^{1/2}$, and the definition of $\la$.  To estimate the advective derivatives, we write
\ali{
\Ddt \Th_I &= e^{i \la \xi_I} \Ddt \tilde{\th}_I \\
\DDdt \Th_I &= e^{i \la \xi_I} \DDdt \tilde{\th}_I 
}
The bounds \eqref{eq:correctionEsts} now follow from the bounds of Lemma~\ref{lem:transport:estimates} and Corollary~\ref{cor:tildeBounds}.  The main terms in the estimates for spatial derivatives arise in every case when the derivatives fall on $e^{i \la \xi_I}$.  Alternatively, one can obtain the same bounds using commutator estimates such as those of Lemma~\ref{lem:commutatorEstimates} extended to second order commutators.  Note that this latter approach avoids using the second advective derivative estimates proven in Lemma \ref{lem:ampCorrectEst}.
\end{proof}

Lemma~\ref{lem:mainLemma} also requires bounds on a vector field $W^l$ satisfying $\div W = \Th$.  To define $W^l$, first recall that the corrections
\[ \Th_I = P_{\approx \la}^I( e^{i \la \xi_I} \th_I ) \]
are frequency localized, which allows us to invert the divergence using the standard Helmholtz solution
\ali{
W_I^l &= \pr^l \De^{-1} P_{\approx \la}^I ( e^{i \la \xi_I} \th_I )
}
With this definition, we have $\Th = \div W$ for $W^l = \sum_I W_I^l$.  The bounds \eqref{eq:Wco}-\eqref{eq:matWco} of Lemma~\ref{lem:mainLemma} now follow from Lemma~\ref{lem:commutatorEstimates} and Lemma~\ref{lem:principAmpEst} by writing
\ali{
(\pr_t + u_\ep \cdot \nab) W_I &= \left[ \Ddt, \pr^l \De^{-1} P_{\approx \la}^I\right] (e^{i \la \xi_I} \th_I ) + \pr^l \De^{-1} P_{\approx \la}^I( e^{i \la \xi_I} \Ddt \th_I )
}
and differentiating in space.


\subsection{Prescribing the energy increment}
We conclude this Section by verifying the estimates \eqref{eq:energyPrescribed} and \eqref{eq:dtenergyPrescribed} for prescribing the energy increment.  To obtain the estimate \eqref{eq:energyPrescribed}, let $t \in \R$ and write
\ali{
\int_{\T^2} |\Th|^2(t,x) dx &= \sum_{I, J} \int \Th_I \cdot \Th_J(t,x) dx \label{eq:energyIncrement}
}
For indices $J \neq \bar{I}$ which are not conjugate to each other, the product $\Th_I \cdot \Th_J$ is localized at frequency $\approx \la$, and in particular has integral $0$.  The only remaining terms are
\ALI{
\int_{\T^2} |\Th|^2(t,x) dx &= \sum_{I} \int |\Th_I|^2(t,x) dx \\
|\Th_I|^2 &= | \th_I + \de \th_I |^2 = |\th_I|^2 + 2 \de \th_I \th_I + \de \th_I^2 
}
The terms involving $\de \th_I$ can all be estimated using Lemma~\ref{lem:ampCorrectEst} and Lemma~\ref{lem:principAmpEst}.
\ALI{
\sum_I \Big| \int_{\T^2} 2 \th_I ~ \de \th_I  + (\de \th_I)^2 dx \Big| &\leq C \fr{e_R}{B_\la N} 
}
The main terms are then given by
\ALI{
\sum_I \int_{\T^2} |\th_I|^2(t,x) dx &= \sum_I \int \eta_{k}^2(t) e(t) \ga^2 dx \\
&= 2 \int e(t) \ga^2 dx \\
&= 2 \int_{\T^2} e(t) (1 + \varepsilon) dx
}
The bound \eqref{eq:energyPrescribed} now follows from \eqref{eq:whichK0weneed} provided $B_\la$ is sufficiently large.  

In order to obtain the estimate \eqref{eq:dtenergyPrescribed}, we differentiate \eqref{eq:energyIncrement} with respect to $t$, and use the fact that the coarse scale velocity field $u_\ep$ is divergence free
\ALI{
\fr{d}{dt} \int_{\T^2} |\Th|^2(t,x) dx &= \sum_{I, J} \int_{\T^2} (\pr_t + u_\ep \cdot \nab) \Th_I \cdot \Th_J(t,x) dx 
}
At this point, we again observe that the terms $(\pr_t + u_\ep \cdot \nab) \Th_I \cdot \Th_J$ are localized in frequency space at frequencies of order $\la$ for all nonconjugate indices $J \neq \bar{I}$.  These terms therefore integrate to $0$ and we are left with
\ALI{
\fr{d}{dt} \int_{\T^2} |\Th|^2(t,x) dx &= \sum_{I} \int_{\T^2} (\pr_t + u_\ep \cdot \nab) |\Th_I|^2(t,x) dx \\
&= \sum_I \int_{\T^2}  (\pr_t + u_\ep \cdot \nab) | \tilde{\th}_I |^2 dx
}
The bound \eqref{eq:dtenergyPrescribed} now follows from Corollary \ref{cor:tildeBounds}.

\subsection{Checking frequency energy levels for the scalar field and drift velocity}

The statement \eqref{eq:theNewEnergyLevel} in Lemma~\ref{lem:mainLemma} requires us to prove that the new scalar field and drift velocity $\th_1 = \th + \Th$, $u_1^l = u^l + U^l$ satisfy the bounds \eqref{bound:nabkth}-\eqref{bound:nabkdtu} for the new compound frequency energy levels $(\Xi', e_v', e_R', e_J') = (C N \Xi, e_R, K_1 e_J, e_J')$ with
\[ e_J' = \left(\fr{e_v^{1/2}}{e_R^{1/2}N} \right)^{1/2} e_R \]
The bounds in \eqref{bound:nabkth} already follow from the arguments in \cite[Section 22]{isett}, as the scalar field $\th$ and drift velocity $u^l$ both share the same estimates as the coarse scale velocity $v^l$ in that paper, and the corrections $\Th$ and $U^l$ both share the same estimates at the corrections $V^l$ in that paper.  The only new point here is how we establish the bound
\ali{
 \co{ (\pr_t u_1 + u_1 \cdot \nab) u_1} &\unlhd (\Xi' e_v') = C N \Xi e_R \label{eq:needNewAdvec}
}
This estimate, which is quadratic in the velocity, is analogous to the bound for the pressure gradient in the case of Euler.

The idea is to use the assumed bound \eqref{bound:nabkdtu} and write
\ali{
(\pr_t u_1 + u_1 \cdot \nab) u_1 &= (\pr_t u + u \cdot \nab u) + U \cdot \nab u + (\pr_t + u \cdot \nab) U \label{eq:}
}
In the case of Euler, the first term $(\pr_t u + u \cdot \nab u)$ can be bounded using the Euler-Reynolds equations.  In our case, though, the bound \eqref{eq:needNewAdvec} on the advective derivative cannot be obtained from commuting the operator $T^l$ with the compound scalar stress equation due to the lack of $C^0$ boundedness of $T^l$, and arguments involving frequency truncations still give logarithmic losses.

The idea is that we have already assumed the bound $\co{(\pr_t u + u \cdot \nab u)} \leq \Xi e_v$, so that \eqref{eq:needNewAdvec} follows from the condition $N \geq \left(\fr{e_v}{e_R}\right)$.  Also, further advective derivatives can be estimated at a cost smaller than $N \Xi$ per derivative up to order $L - 1$, giving \eqref{bound:nabkdtu} for this term.  The proof of \eqref{bound:nabkdtu} for the other two terms is the same as in \cite[Section 22]{isett}.  The main idea is to write $(\pr_t + u \cdot \nab) = (\pr_t + u_\ep \cdot \nab) + (u - u_\ep) \cdot \nab$, and then to apply the relevant bounds established earlier on in Sections~\ref{sec:specifyingParams}-\ref{sec:estimatesCorrections}.

\section{Estimates for the New Stress} \label{sec:boundNewStress}
In this Section, we conclude the proof of Lemma~\ref{lem:mainLemma} by establishing estimates for the error terms contributing to the new stress field which were derived in Section~\ref{sec:theErrorTerms}.  Recall from that section that the new scalar field $\th_1 = \th + \Th$ and the new drift velocity $u_1^l = u^l + U^l$ satisfy the compound scalar stress equation
\ali{
\label{eq:scalarStressB2}
\left \{
\begin{aligned}
\pr_t \th_1 + \pr_l(\th_1 u_1^l) &= \pr_l( c_B B^l + R_1^l)  \\
u_1^l &= T^l[\th_1] 
\end{aligned}
\right.
}
The function $c_B$ is defined in \eqref{eq:decomposeRepComponents}, and the new stress field has the form
\ali{
R_1^l &= R_T + R_L + R_H + R_M + R_S \label{eq:recallNewStress}
}
as in \eqref{eq:allTheErrors}.  For these error terms, the Main Lemma requires us to show that the bounds of Definition~\ref{def:freqEnDef} are satisfied for the compound frequency energy levels $(\Xi', e_v', e_R', e_J' )$ specified in \eqref{eq:theNewEnergyLevel}.  Our starting point will be to prove the $C^0$ estimates
\ali{
\co{ c_B } &\unlhd K_1 e_J \label{eq:coBoundcB}\\
\co{R_1} &\unlhd e_J' \label{eq:allTheStressC0} \\
e_J' &= \left(\fr{e_v^{1/2}}{e_R^{1/2}N} \right)^{1/2} e_R
}
We will obtain these bounds in Section~\ref{sec:coBounds}, at which point we will finally specify the large constant $B_\la$ appearing in line \eqref{eq:lambdaDef} where $\la$ is defined.

Once the $C^0$ estimates are established and $B_\la$ is chosen, the bounds on spatial derivatives
\ali{
\co{ \nab^k c_B } &\unlhd C (N \Xi)^k K_1 e_J \qquad k = 0, \ldots, L \\
\co{ \nab^k R_1 } &\unlhd C (N \Xi)^k e_J', \qquad k = 0, \ldots, L
}
will be clear, and we will also need to verify the estimates for the advective derivatives
\ali{
\co{ \nab^k (\pr_t + u_1 \cdot \nab) c_B } &\unlhd C (N \Xi)^k (N \Xi e_R^{1/2}) K_1 e_J \label{eq:wantAdvecCb} \\
\co{ \nab^k (\pr_t + u_1 \cdot \nab) R_1 } &\unlhd C (N \Xi)^k (N \Xi e_R^{1/2}) e_J' \label{eq:wantAdvecR1} \\
k &= 0, \ldots, L-1 \notag
}
These bounds will be checked in Sections~\ref{sec:spatialBoundsStress} and ~\ref{sec:AdvecBoundsStress}, which will conclude the proof of Lemma~\ref{lem:mainLemma}.

\subsection{The \texorpdfstring{$C^0$}{C0} bounds} \label{sec:coBounds}

In this Section, we establish the $C^0$ bounds \eqref{eq:coBoundcB}-\eqref{eq:allTheStressC0}.  The bound \eqref{eq:allTheStressC0} will be obtained only after the constant $B_\la$ of line \eqref{eq:lambdaDef} is chosen sufficiently large.

First, observe that the estimate \eqref{eq:coBoundcB} for $c_B$ follows immediately from line \eqref{eq:decomposeRepComponents} where $c_B$ is defined, and the bound $\co{ R_\ep } \leq \co{ R_J }$.  Note that the constant $K_1$ depends only on the operator $T^l$.

It now remains to estimate the stress terms appearing in \eqref{eq:recallNewStress}.  We estimate each of these in turn.

\paragraph{The mollification term $R_M^l$.}

We recall from~\eqref{eq:mollifyRappears} and \eqref{eq:RMl:mollified} that
\begin{align}
R_M^l &= (u^l - u_\ep^l) \Th + (\th - \th_\ep)U^l + (c_A - \tilde{c}_A) A^l + (R_J^l - R_\ep^l) \\
&= R_{M,\th}^l + R_{M,u}^l + (c_A - \tilde{c}_A) A^l + (R_J^l - R_\ep^l) + R_{M'}^l. \label{eq:listOfMollTermsFinal}
\end{align}
Note that by the choice of $B$, from~\eqref{eq:ourFirstMollifyGoal} and \eqref{eq:pickingLengthTimeScales} we have that 
\ali{
\co{R_{M,\theta}} + \co{R_{M,u}} + \co{(c_A - \tilde{c}_A) A^l} + \co{(R_J^l - R_\ep^l) } \leq \frac{e_v^{1/2} e_R^{1/2}}{50 N} = \fr{1}{50} \left( \fr{e_v^{1/2}}{e_R^{1/2} N} \right) e_R \label{eq:weAlreadyBoundedMoll}
}
The factor $\left( \fr{e_v^{1/2}}{e_R^{1/2} N} \right)$ is less than $1$, so this estimate is more than enough to achieve the bound \eqref{eq:allTheStressC0}.  

To estimate the remaining term
\ali{
R_{M'}^l &= \sum_I e^{i \la \xi_I} (u^l - u_\ep^l) \de \th_I + \sum_I e^{i \la \xi_I} (\th - \th_\ep) \de u_I^l \label{eq:lastMollTerm}
}
recall the estimates
\ali{
\co{  (u^l - u_\ep^l)  } + \co{ (\th - \th_\ep ) } &\leq \fr{e_v^{1/2}}{N} \leq e_v^{1/2} \\
\co{ \de \th_I } + \co{ \de u_I^l } &\leq C \fr{e_R^{1/2}}{B_\la N}
}
from Section~\ref{sec:specifyingParams} and Lemma~\ref{lem:ampCorrectEst} (in fact the estimates for the terms $(\th - \th_\ep)$ and $(u - u_\ep)$ are even better).  Note also that, at any given time $t$, at most four indices $I$ contribute to the sum in \eqref{eq:lastMollTerm}.

For sufficiently large values of $B_\la$, we therefore obtain
\[ \co{ R_{M'}^l } \leq \fr{1}{50} \fr{e_v^{1/2} e_R^{1/2}}{N} \]
which is sufficient for \eqref{eq:allTheStressC0}.

\paragraph{The Stress term $R_S$.}
To estimate $R_S$, let us recall from \eqref{eq:stressErrorInPieces} that we can express
\ali{
R_S^l &= \sum_I (R_{SI,1}^l + R_{SI,2}^l ) \label{eq:rememberStressTerm} \\
R_{SI,1}^l &= |\th_I|^2 [ (m^l(-\nab \xi_I) - m^l(-\nab \hat{\xi}_I)) + ( m^l(\nab \xi_I) - m^l(\nab \hat{\xi}_I) ) ] \label{eq:newLowFreqTerm} \\
R_{SI,2}^l &= \de \th_I \tilde{u}_{\bar{I}}^l + \tilde{\th}_I \de u_{\bar{I}}^l - \de \th_I \de u_{\bar{I}}^l 
+ \de \th_{\bar{I}} \tilde{u}_{I}^l + \tilde{\th}_{\bar{I}} \de u_{I}^l - \de \th_{\bar{I}} \de u_{I}^l
}
The estimates of Lemma~\ref{lem:principAmpEst}, Lemma~\ref{lem:ampCorrectEst} and Corollary~\ref{cor:tildeBounds} give
\ALI{
\co{ R_{SI, 2} } &\leq \fr{C}{B_\la N} e_R
}
At any given time $t$, as most $4$ terms of the form $R_{SI, 2}$ are nonzero, which allows us to obtain the estimate
\ALI{
\co{ \sum_I | R_{SI,2} | } &\leq \fr{e_v^{1/2} e_R^{1/2}}{500 N} 
}
which is sufficient for \eqref{eq:allTheStressC0}, by taking the value of $B_\la$ sufficiently large.

We estimate the terms in \eqref{eq:newLowFreqTerm}  using \eqref{eq:closeToInitDat} and Lemma~\ref{lem:principAmpEst}, in order to obtain the bound
\ALI{
\co{ R_{SI,1}^l } &\leq \fr{C}{B_\la^{1/2}} \left(\fr{e_v^{1/2}}{e_R^{1/2}N} \right)^{1/2} e_R
}
By choosing the constant $B_\la$ sufficiently large, we obtain the bound
\ali{
\co{ \sum_I | R_{SI,1}^l | } &\leq \fr{1}{1000} e_J' 
}
where $e_J'$, as defined in \eqref{eq:allTheStressC0}, is our goal for the size of the new stress term $R_1^l$.

For the next stress terms, $R_L$ and $R_T$, we use that they are frequency localized between two constant multiples of $\lambda$, and thus we can appeal to the estimate
\ali{
\| \nab \De^{-1} P_{\approx \la} \|_{C^0 \to C^0} \leq C \la^{-1} = \fr{C}{N B_\la}.
\label{eq:solving:divergence:bound}
}

\paragraph{The High-Low term $R_L$.} We recall from \eqref{eq:highLowTermIs}
that
\[
R_L^l = \pr^l \Delta^{-1} P_{\approx \la} [ U^j \pr_j \theta_\epsilon]
\]
and thus
\[
\co{R_L} \leq \| \nab \De^{-1} P_{\approx \la} \|_{C^0 \to C^0} \co{U^j} \co{\pr_j \theta_\epsilon} \leq \fr{C}{N B_\la}  e_R^{1/2} e_v^{1/2}
\]
holds, upon appealing to \eqref{eq:solving:divergence:bound}. Choosing $B_\la$ sufficiently large, we see that
\[ \co{R_L} \leq \fr{1}{1000} \fr{e_v^{1/2} e_R^{1/2}}{N} \]
which is sufficient for \eqref{eq:allTheStressC0} to be satisfied.

\paragraph{The Transport term $R_T$.} We use \eqref{eq:RTdef} to recall that 
\[
R_T = \pr^l \Delta^{-1} P_{\approx \la}\left[ \Ddt \Theta\right].
\]
Thus, from \eqref{eq:solving:divergence:bound} and \eqref{eq:correctionEsts} we obtain
\ALI{ 
\co{ R_T } 
&\leq \frac{C}{\lambda} \tau^{-1} e_R^{1/2} \notag\\
&=\frac{C}{B_\lambda N \Xi } B_\la^{1/2} \left( \fr{e_v^{1/2}}{e_R^{1/2} N} \right)^{-1/2} \Xi e_v^{1/2} e_R^{1/2} \\
&= \fr{C}{B_\la^{1/2}} \left(\fr{e_v^{1/2}}{e_R^{1/2}N} \right)^{1/2} e_R 
}
in view of the choice of $\tau$ in \eqref{eq:tau:def}. Choosing $B_\la$ sufficiently large immediately shows  that
\[ \co{R_T } \leq \fr{1}{1000} e_J' \]
holds.

\paragraph{The High-Frequency Interference term $R_H$.} To conclude the $C^0$ stress estimates we recall from \eqref{eq:highFreqPartOfStressIs} and \eqref{eq:MIJfirst}--\eqref{eq:MIJlast} that
\ALI{
R_H &= \sum_{J \neq \bar I} i\lambda \pr^l \Delta^{-1}  P_{\approx \lambda} \\
&\qquad \Big[ e^{i \la (\xi_I + \xi_J)} \left( \th_J {\tilde \th}_I ( m^l(\nab \xi_J) - m^l(\nab \hat{\xi}_J) ) \pr_l \xi_I   +  \th_I {\tilde \th}_J m^l(\nab \hat{\xi}_J)( \pr_l \xi_I - \pr_l \hat{\xi}_I)+  \de u_J^l \pr_l \xi_I {\tilde \th}_I \right) \Big].
}
From \eqref{eq:solving:divergence:bound} we thus obtain
\ALI{
\co{R_H} 
&\leq C A e_R (\Xi e_v^{1/2} \tau) +  \frac{C}{B_\lambda N } e_R \\
&\leq \fr{C A }{B_\la^{1/2}} \left(\fr{e_v^{1/2}}{e_R^{1/2}N} \right)^{1/2} e_R + \frac{C}{B_\lambda N } e_R.
}
For all sufficiently large choices of $B_\la$, we finally have the estimate
\[ \co{R_H } \leq \fr{1}{1000} e_J'  \]
This error term is the last one, so the estimate \eqref{eq:allTheStressC0} will finally be satisfied for any sufficiently large choice of $B_\la \geq \overline{B_\la}$.  The only restriction now is that $\la = B_\la N \Xi$ in \eqref{eq:lambdaDef} must be a positive integer.  Since we assume $\Xi \geq 2$ in Definition~\ref{def:freqEnDef} and $N \geq 1$, an appropriate choice of $B_\la$ exists in the interval $B_\la \in [\overline{B_\la}, 2 \overline{B_\la}]$.  Our construction is now fully specified once such a value is chosen.   

%
%

\subsection{Spatial derivative bounds}\label{sec:spatialBoundsStress}

First we claim that 
\[
\co{\nabla^k c_B} \leq C_k (N \Xi)^k K_1 e_J
\]
For this purpose, recall the definition \eqref{eq:decomposeRepComponents} and the bound~\eqref{eq:tilde:cA:bounds}. 
This above estimate holds since we have already verified the $C^0$ estimate \eqref{eq:coBoundcB}, and each spatial derivatives costs no more than a factor 
\[ |\nab| \leq C N \Xi. \]

The stress terms $R_T, R_L$, and $R_H$ each are contain a frequency localizing operator $P_{\approx \la}$, so that again, each spatial derivative costs at most $CN \Xi$, since the constant $B_\la$ has now been fixed, in the previous subsection.

The term $R_M^l$ is treated in the same fashion as the mollified stress term in \cite[Section 25.1]{isett}. The main ideas is that comparing the bound
\[ \co{ u - u_\ep } \leq \fr{e_v^{1/2}}{N}\]
which had been used to establish \eqref{eq:weAlreadyBoundedMoll} in Section~\ref{sec:specifyingParams}, to the estimate
\[ \co{ \nab u} + \co{ \nab u_\ep } \leq C \Xi e_v^{1/2} \]
we notice the cost is at most $C N \Xi$ upon taking a spatial derivative.

The $R_S^l$ stress is treated by writing $R_S^l = \sum_I (R_{SI,1}^l + R_{SI,2}^l)$. The estimate for $R_{SI,2}$ follows from the bounds established in Lemmas~\ref{lem:principAmpEst}--\eqref{lem:ampCorrectEst}. To treat $R_{SI,1}$ we need to observe that the spatial derivative costs at most $N \Xi$ when it is applied to the difference $\nab \xi_I - \nab \hat{\xi}_I$.  Comparing the bounds of Lemma~\ref{lem:transport:estimates} and \eqref{eq:closeToInitDat}
\ALI{
\co{ \nab \xi_I - \nab \hat{\xi}_I } &\leq \left( \fr{e_v^{1/2} }{e_R^{1/2} N} \right)^{1/2} \\
\co{ \nab^2 \xi_I } \leq C \Xi
}
gives a cost of $|\nab| \leq C N^{1/2} \Xi$, which is smaller than the threshold $N \Xi$.  All further derivatives of this term cost at most $C N^{1/2} \Xi$ according to Lemma~\ref{lem:transport:estimates}.

\subsection{Advective Derivative bounds} \label{sec:AdvecBoundsStress}

We now proceed to establish the advective derivative bounds \eqref{eq:wantAdvecCb}-\eqref{eq:wantAdvecR1} for the new frequency energy levels, which is more subtle than the spatial derivative estimates due to the improved regularity of the advective derivative.  As observed in~\cite[Proposition 24.1 and Proposition 24.2]{isett}, note that it suffices to check the bounds for the coarse scale advective derivative $\Ddt = \pr_t + u_\ep \cdot \nab$ after we write
\[ \pr_t + u_1 \cdot \nab = (\pr_t + u_\ep \cdot \nab) + (u - u_\ep) \cdot \nab + U \cdot \nab. \]
Having established spatial derivative estimates on all our error terms, the the bounds for the two error spatial derivative terms follow from the results of Section~\ref{sec:spatialBoundsStress}, the already established estimates on spatial derivatives for $\co{\nab^k ( u - u_\ep ) }$ which follow from \eqref{eq:basicConvEstu}, and the bounds on $\co{ \nab^k U }$, which follow from Propositon~\ref{prop:velocScalarCorrections}.

Since each term has been estimated already in $C^0$ by the energy level $e_J'$, our goal at this point is to check that the advective derivative never costs any more than
\ali{
 \Big| \Ddt \Big| &\unlhd C N \Xi e_R^{1/2}  \label{ineq:bigGoalForAdvec}
}
compared the estimates that were used to obtain the $C^0$ bound.

For most terms, the advective derivative costs $\tau^{-1}$, so it is useful to observe that our goal is also implied by a bound of the type
\ali{
 \Big| \Ddt \Big| \leq C \tau^{-1} \label{eq:tauInvIsOK}
}
from the fact that $\tau^{-1} \leq N \Xi e_R^{1/2}$.  For terms involving the difference between the phase gradients and their initial values, the following Lemma stating the cost of differentiating $\nab \xi_I - \nab \hat{\xi}_I$ is helpful
\begin{lem}\label{lem:phaseGradDiffEst}  For $k \geq 0$ and $0 \leq r \leq 2$, we have the following bounds
\ali{
\co{ \nab^k \left(\Ddt\right)^r ( \nab \xi_I - \nab \hat{\xi}_I) } &\leq C_k (N\Xi)^k \tau^{-r} b \\
b &= \left(\fr{e_v^{1/2}}{e_R^{1/2}N} \right)^{1/2}
}
\end{lem}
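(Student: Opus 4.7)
The plan is to reduce the lemma to results already at our disposal, exploiting one simple but crucial algebraic feature of our construction: the initial data $\hat{\xi}_I(x) = \pm 10^{[k]} \xi^{(1)} \cdot x$ is affine in $x$, so $\nab \hat{\xi}_I$ is a \emph{constant vector} (independent of both $t$ and $x$). This means every derivative of the difference $\nab \xi_I - \nab \hat{\xi}_I$ beyond order zero in space, or of any positive order in time, reduces to a derivative of $\nab \xi_I$ alone, which is controlled by Lemma~\ref{lem:transport:estimates}.

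First, I would handle the base case $k = r = 0$ directly from \eqref{eq:closeToInitDat}. Because $\nab \hat{\xi}_I(x)$ does not depend on $x$, the Lagrangian statement $|\nab \xi_I(\Phi_s(t,x)) - \nab \hat{\xi}_I| \leq C b$ transfers verbatim to the Eulerian $C^0$ bound once we write an arbitrary space-time point in the support of $\xi_I$ as $\Phi_s(t(I), x)$ for some $|s| \leq 2\tau/3$.

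Next, for any $(k,r)$ with $k + r \geq 1$, I would use the identity
\[ \nab^k \left(\Ddt\right)^r (\nab \xi_I - \nab \hat{\xi}_I) = \nab^k \left(\Ddt\right)^r \nab \xi_I, \]
which holds because a constant vector is annihilated by $\nab$ (when $k\geq 1$) and by $\Ddt = \pr_t + u_\ep \cdot \nab$ (when $r \geq 1$, since it is both time-independent and spatially constant). Then I would invoke Lemma~\ref{lem:transport:estimates} to bound the right-hand side by $C_k \Xi^k (\Xi e_v^{1/2})^r N^{(k + (r-1)_+ + 1 - L)_+/L}$.

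The final step is a routine arithmetic verification that this bound is dominated by $C_k (N \Xi)^k \tau^{-r} b$. Using the identity $\tau^{-1} b = \Xi e_v^{1/2}$ from \eqref{eq:tau:def}, the comparison reduces to
\[ N^{(k + (r-1)_+ + 1 - L)_+/L} \leq C_k N^k, \]
which holds for $k \geq 1$ since $N \geq 1$ and the exponent on the left is at most $k$ for all $r \in \{0,1,2\}$ and $L \geq 2$. For the remaining case $k = 0$, $r \in \{1,2\}$, the required inequality becomes $1 \leq b^{-(r-1)}$ (up to a constant), which holds because $N b^2 = (e_v/e_R)^{1/2} \geq 1$ forces $b \leq 1$. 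Since each piece is either a direct application of Lemma~\ref{lem:transport:estimates} or a short size comparison, I do not anticipate any real obstacle; the only point requiring care is ensuring the exponent accounting in the final arithmetic step matches the slight asymmetry in Lemma~\ref{lem:transport:estimates} between the $r=1$ and $r=2$ cases.
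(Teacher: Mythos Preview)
Your approach is exactly the one the paper takes: reduce to Lemma~\ref{lem:transport:estimates} (using that $\nab\hat{\xi}_I$ is constant) and then verify the parameter inequalities. The paper's own proof is a one-liner invoking precisely this, together with the condition $N \geq e_v/e_R$.

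There is, however, a small arithmetic slip in your final step. After substituting $\Xi e_v^{1/2} \sim \tau^{-1} b$, the comparison between source and target is
\[
b^{r-1}\, N^{(k + (r-1)_+ + 1 - L)_+/L} \leq C_k\, N^k,
\]
not simply $N^{(\cdots)} \leq C_k N^k$. For $r \geq 1$ the factor $b^{r-1} \leq 1$ can be dropped and your argument goes through. But for $r = 0$ and $k \geq 1$ you are left with an extra $b^{-1}$ on the left, and since $b$ can be arbitrarily small this is not automatic. The fix is exactly the condition the paper cites: from $e_v \geq e_R$ (a fortiori from $N \geq e_v/e_R$) one has $N b^2 = (e_v/e_R)^{1/2} \geq 1$, hence $b^{-1} \leq N^{1/2}$. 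Then it suffices to check $(k+1-L)_+/L \leq k - \tfrac{1}{2}$ for $k \geq 1$ and $L \geq 2$, which is immediate. With this correction your argument is complete.
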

Lemma~\ref{lem:phaseGradDiffEst} follows from Lemma~\ref{lem:transport:estimates} after checking the relationships of the parameters using the condition $N \geq \fr{e_v}{e_R}$.
\begin{cor}\label{cor:mPhaseGradDiff} The bounds in Lemma~\ref{lem:phaseGradDiffEst} hold also for
\[m^l(\nab \xi_I) - m^l(\nab \hat{\xi}_I) = ( \nab \xi_I - \nab \hat{\xi}_I) \int_0^1 \pr_a m^l\Big( (1-\si)\nab \hat{\xi}_I + \si \nab \xi_I \Big) d\si  \]
\end{cor}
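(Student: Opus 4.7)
}
The plan is to apply Leibniz's rule to the product representation already provided in the statement and then use the chain rule to handle the composition with $m^l$.  Writing
\[
 m^l(\nab \xi_I) - m^l(\nab \hat{\xi}_I) = (\nab \xi_I - \nab \hat{\xi}_I)^a \, M^l_a, \qquad M^l_a := \int_0^1 \pr_a m^l\Big( (1-\si)\nab \hat{\xi}_I + \si \nab \xi_I \Big) d\si,
\]
any derivative $\nab^{k_1}(\Ddt)^{r_1}$ falling on the first factor is controlled by Lemma \ref{lem:phaseGradDiffEst} with the desired gain of $b$, so the entire task reduces to showing that the integral factor $M^l_a$ and its derivatives are bounded at the cost of $C_k(N\Xi)^{k_2}\tau^{-r_2}$ per derivative.

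The first step is to verify that the argument of $\pr_a m^l$ stays in a compact set bounded away from the origin.  By construction $\nab \hat{\xi}_I = \pm 10^{[k]}\xi^{(1)}$ is a fixed nonzero integer vector, and by \eqref{eq:closeToInitDat} we have $|\nab\xi_I - \nab\hat{\xi}_I|\leq C b \ll |\nab\hat{\xi}_I|$ once $B_\la$ is large enough.  Hence for every $\si\in[0,1]$ the convex combination $(1-\si)\nab\hat{\xi}_I + \si\nab\xi_I$ lies in a small fixed neighborhood $\Omega_I$ of $\nab\hat{\xi}_I$ on which $\pr_a m^l$ is smooth (by the standing assumption that $m^l$ is smooth away from the origin).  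In particular all derivatives of $\pr_a m^l$ are uniformly bounded on $\Omega_I$ by constants depending only on the operator $T^l$.

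Next, the chain rule / Faà di Bruno formula, applied to the composition $\pr_a m^l \circ \big((1-\si)\nab\hat{\xi}_I + \si \nab\xi_I\big)$, expresses $D^{(k,r)}(\pr_a m^l(\cdot))$ as a polynomial in the derivatives $D^{(k',r')}\nab\xi_I$ with $k'\leq k$, $r'\leq r$, multiplied by bounded derivatives of $\pr_a m^l$.  Using Lemma \ref{lem:transport:estimates} to bound each $\|D^{(k',r')}\nab\xi_I\|_{C^0}\leq C\,\Xi^{k'}\tau^{-r'}$ (with the extra $N^{(\cdot)/L}$ factors absorbed into $C_k(N\Xi)^k$ because $k\leq L-1$ in our range, or simply yielding additional $N^{1/L}$ losses that are harmless when estimating the product against $(N\Xi)^k$), the worst term has total size $\leq C_k \Xi^{k_2}\tau^{-r_2}$, which is dominated by $C_k (N\Xi)^{k_2}\tau^{-r_2}$.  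Uniformity in $\si\in[0,1]$ allows us to commute derivatives past the $d\si$ integral, giving the same bound for $M^l_a$.

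Combining via Leibniz with $k_1+k_2=k$, $r_1+r_2=r$ and invoking Lemma \ref{lem:phaseGradDiffEst} for the factor $(\nab\xi_I-\nab\hat{\xi}_I)^a$ yields the claimed estimate
\[
\co{\nab^k(\Ddt)^r\big(m^l(\nab\xi_I)-m^l(\nab\hat{\xi}_I)\big)}\leq C_k (N\Xi)^k \tau^{-r} b.
\]
The only mild subtlety is the observation in the first step that the argument of $\pr_a m^l$ really stays away from the origin; this is where the smallness guaranteed by \eqref{eq:closeToInitDat} (which itself uses the choice of $\tau$ in \eqref{eq:tau:def}) is essential.  Beyond that, everything is a bookkeeping exercise with the chain rule, so there is no serious obstacle.
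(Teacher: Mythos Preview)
Your proposal is correct and follows exactly the approach implicit in the paper: the paper simply records the mean-value representation displayed in the statement, and the intended proof is precisely the Leibniz/chain-rule argument you outline, using Lemma~\ref{lem:phaseGradDiffEst} for the factor $(\nab\xi_I - \nab\hat{\xi}_I)$ and Lemma~\ref{lem:transport:estimates} together with the smoothness of $m^l$ away from the origin (guaranteed by \eqref{eq:closeToInitDat}) for the integral factor $M^l_a$.
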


With these bounds in hand, we can now quickly verify \eqref{ineq:bigGoalForAdvec}.  

\paragraph{The term $c_B$.}

The term $c_B$ inherits the estimates for $R_\ep$ from its definition in \eqref{eq:decomposeRepComponents}.  These bounds are no worse than the bounds stated for $R_J$ in \eqref{eq:tilde:cA:bounds} as long as one takes no more than $2$ advective derivatives and no more than $L$ total spatial or advective derivatives (see \cite[Section 18]{isett}).  As a consequence, we obtain \eqref{eq:wantAdvecCb}.

\paragraph{The mollification term $R_M^l$.}

The mollification term \eqref{eq:listOfMollTermsFinal} is handled in the same way as in \cite[Sections 25.1, 25.2]{isett}.  Among these estimates, the most subtle are the terms
\[ (u - u_\ep) \Th + (\th - \th_\ep) U \]
For the purposes of proving our result and the main theorem in \cite{isett}, these terms can be estimated separately as
\[ \left\co{ \left(\Ddt u - \Ddt u_\ep\right) \Th \right} \leq C \left(\left\co{\Ddt u\right} + \left\co{\Ddt u_\ep\right}\right)\co{\Th} \]
at the cost of requiring the condition $N \geq \left( \fr{e_v}{e_R} \right)^{3/2}$.  However, as discussed in \cite[Section 25.1]{isett}, it appears that a scheme aimed at proving $1/3$ regularity might require this term to be estimated more delicately.  A more delicate commutator estimate would allow us to require instead that $N \geq \left( \fr{e_v}{e_R} \right)$.

\paragraph{The stress term $R_S$}

For the term $R_S$, the cost \eqref{ineq:bigGoalForAdvec} is obtained for every term appearing in \eqref{eq:rememberStressTerm} using the estimates of Lemmas~\ref{lem:principAmpEst}-\ref{lem:ampCorrectEst} and Corollary~\ref{cor:tildeBounds} for the amplitudes, and using Lemma~\ref{lem:phaseGradDiffEst} and Corollary~\ref{cor:mPhaseGradDiff} for the terms involving differences of phase gradients.

\paragraph{The terms $R_T$, $R_L$ and $R_H$}

The commutator estimates of Lemma~\ref{lem:commutatorEstimates} and the use of frequency localized waves make it especially simple to estimate the terms obtained by solving the divergence equation.  We list these terms here.
\ali{
\Ddt R_T^l &= \left[\Ddt,\pr^l \Delta^{-1} P_{\approx \la}\right] \left[ \Ddt \Theta\right] + \pr^l \Delta^{-1} P_{\approx \la} \left[ \DDdt \Th \right] \\
\Ddt R_L^l &=  \left[\Ddt,\pr^l \Delta^{-1} P_{\approx \la}\right] \left[ U^j \pr_j \theta_\epsilon\right] + \pr^l \Delta^{-1} P_{\approx \la} \Ddt \left[ U^j \pr_j \theta_\epsilon\right] \\
\Ddt R_H^l &= \sum_{J \neq \bar I} \left[\Ddt,\pr^l \Delta^{-1}  P_{\approx \lambda}\right] r_{H, IJ} + \pr^l \Delta^{-1}  P_{\approx \lambda} \Ddt r_{H,IJ} \\
r_{H,IJ} &= (i\la) e^{i \la (\xi_I + \xi_J)} \left( \th_J {\tilde \th}_I ( m^l(\nab \xi_J) - m^l(\nab \hat{\xi}_J) ) \pr_l \xi_I \right) \\
&+ (i \la) e^{i \la(\xi_I + \xi_J)} \left(\th_I {\tilde \th}_J m^l(\nab \hat{\xi}_J)( \pr_l \xi_I - \pr_l \hat{\xi}_I)+  \de u_J^l \pr_l \xi_I {\tilde \th}_I \right).
}
Combining Lemma~\ref{lem:commutatorEstimates} with Corollary~\ref{cor:mPhaseGradDiff} and all the bounds of Section~\ref{sec:estimatesCorrections}, we obtain a cost of \eqref{eq:tauInvIsOK} (and therefore \eqref{ineq:bigGoalForAdvec}) for the advective derivative.  Further spatial derivatives cost at most $C N \Xi$ as all the terms are in fact localized to frequencies of order $\la$.

This estimate concludes the proof of the Main Lemma.


\section{Proof of the Main Theorem} \label{sec:mainLemWorks}
In this Section, we explain how Theorem \ref{thm:mainThm} can be deduced from the Main Lemma, Lemma~\ref{lem:mainLemma}.  
More specifically, the Theorem we establish directly is the following:

\begin{thm}\label{thm:mainThm2} As in the hypotheses of Theorem \ref{thm:mainThm}, let $\a < 1/9$, let $\ep > 0$ be given, and let $f : \R \times \T^2 \to \R$ be any smooth function of compact support
\[ \supp f \subseteq I \times \T^2 \]
for which the integral
\ali{
 \int_{\T^2} f(t,x) dx &= 0, \qquad t \in \R 
}
remains constant in time.  Then there exists a function $\th : \R \times \T^2 \to \R$ with the following properties:
\begin{enumerate}
\item $\th$ satisfies the Active Scalar Equation \eqref{eq:activeScalar} in the sense of distributions.
\item The scalar field $\th$ and the drift velocity $u^l = T^l[\th]$ both belong to the H\"{o}lder class $\th, u^l \in C_{t,x}^\a$
\item $\th$ is supported in the time interval
\ali{ 
\supp \th \subseteq  I_\ep \times \T^2, \label{eq:suppOfTh}
}
where $I_\ep = [a_0 - \ep, b_0 + \ep]$ is an $\ep$-neighborhood of the interval $I = [a_0, b_0]$
\item $\th$ satisfies a uniform estimate
\ali{
\co{\th} &\leq C \label{eq:c0bdTh}
}
with $C$ depending only on $f$.
\item For any smooth function $\phi : \R \times \T^2 \to \C$, we have
\ali{
\left| \int_{\R \times \T^2} (\th - f) \phi~dt dx \right| &\leq C \ep \| \nab \phi \|_{L^1_{t,x}(I_\ep \times \T^2)} \label{ineq:weakApprox}
}
for some constant $C$ depending on $f$.
\end{enumerate}
\end{thm}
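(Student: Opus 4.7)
The plan is to build $\th$ as the limit of an iterative scheme $\th = \th_{(0)} + \sum_{k\geq 1}\Th_{(k)}$, where $\th_{(0)}$ is a smooth approximation of $f$ and each $\Th_{(k)}$ is produced by one application of the Main Lemma. I would start by taking $\th_{(0)} = f \ast \rho_\delta$ for a standard spatial mollifier $\rho_\delta$ at a scale $\delta = \delta(\ep, f)$ to be chosen. Since the hypothesis $\int_{\T^2} f(t,x)\,dx = \tx{const}$ transfers to $\th_{(0)}$, the quantity $\pr_t \th_{(0)} + \pr_l(\th_{(0)} u_{(0)}^l)$ with $u_{(0)}^l = T^l[\th_{(0)}]$ has zero spatial mean, so $R_{(0)}^l := \pr^l \De^{-1}(\pr_t \th_{(0)} + \pr_m(\th_{(0)} u_{(0)}^m))$ is a smooth vector field supported in a $\delta$-neighborhood of $I\times\T^2$, of size bounded by a constant depending only on $f$. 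Using that $\{A^l, B^l\}$ spans $\R^2$, decompose $R_{(0)}^l = c_{A,(0)} A^l + c_{J,(0)} B^l$ and view $(\th_{(0)}, u_{(0)}^l, c_{A,(0)}, c_{J,(0)} B^l)$ as an initial compound scalar-stress field with vector $A^l$. Choosing $\Xi_{(0)} \sim \delta^{-1}$ and $e_{v,(0)}, e_{R,(0)}, e_{J,(0)}$ as constants depending on $f$, the initial compound frequency-energy levels hold to any desired order $L$.

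For the iteration, fix a large $Z = Z(\alpha) \gg 1$ and pick $N_{(k)} = K_1^2 Z^{9/2}$, which produces the self-similar scaling $e_{R,(k)} = Z^{-1} e_{R,(k-1)}$, $e_{v,(k)} = e_{R,(k-1)}$, and $\Xi_{(k)} = C N_{(k)} \Xi_{(k-1)} \sim Z^{9k/2}\Xi_{(0)}$, while the recursion $e_{J,(k)} = (e_{v,(k-1)}^{1/2}/(e_{R,(k-1)}^{1/2}N_{(k)}))^{1/2} e_{R,(k-1)}$ of (\ref{eq:theNewEnergyLevel}) exactly feeds into the next input at the $e_{R,(k)}/K_1$ slot. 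At stage $k$, I would apply Lemma~\ref{lem:mainLemma} (alternating the roles of $A^l$ and $B^l$) with energy profile $e_{(k)}(t) = K e_{R,(k-1)} \chi_{(k)}(t)$, where $\chi_{(k)}$ is a smooth time cutoff equal to $1$ on a $\hat\tau_{(k)}$-neighborhood of the support of the previous stress and supported inside $I_\ep$; since $\hat\tau_{(k)} = \Xi_{(k-1)}^{-1} e_{v,(k-1)}^{-1/2}$ decays geometrically, $\sum_k \hat\tau_{(k)}$ can be made less than $\ep - \delta$ by choosing $\delta$ small, so the cutoffs nest inside $I_\ep$, and the derivatives of $\chi_{(k)}$ are controlled at the scale $\hat\tau_{(k)}^{-1}$ required by (\ref{ineq:goodEnergy}). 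Conditions (\ref{eq:conditionsOnN2}) and (\ref{eq:lowBoundEoftx}) are satisfied by direct inspection.

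For the H\"older convergence, the bounds (\ref{eq:Vco})-(\ref{eq:nabVco}) give $\|\Th_{(k)}\|_{C^0} + \|U_{(k)}\|_{C^0} \lesssim e_{R,(k-1)}^{1/2} \sim Z^{-k/2}$ and $\|\nab\Th_{(k)}\|_{C^0} + \|\nab U_{(k)}\|_{C^0} \lesssim \Xi_{(k)} e_{R,(k-1)}^{1/2} \sim Z^{4k}$, so by interpolation $\|\Th_{(k)}\|_{C^\alpha} + \|U_{(k)}\|_{C^\alpha} \lesssim Z^{k(9\alpha - 1)/2}$, which is summable for $\alpha < 1/9$ once $Z$ is large. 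The advective bound (\ref{eq:matVco}), combined with the spatial bounds on $u_{(k)}$, yields the same H\"older regularity in time; hence $\th, u^l \in C_{t,x}^\alpha$. Since $\|R_{J,(k)}\|_{C^0} + \|c_{*,(k)}\|_{C^0} \to 0$, passing to the limit in the compound scalar-stress equation shows $\th$ is a weak solution of (\ref{eq:activeScalar}). The support property (\ref{eq:suppOfTh}) and the uniform $C^0$ bound (\ref{eq:c0bdTh}) follow directly from the nesting $\supp\chi_{(k)} \subseteq I_\ep$ and from summing the $C^0$ estimates.

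For the weak-$\ast$ approximation (\ref{ineq:weakApprox}), use the representation $\Th_{(k)} = \nab \cdot W_{(k)}$ from (\ref{eq:Wco}), which gives $\|W_{(k)}\|_{C^0} \lesssim \Xi_{(k)}^{-1} N_{(k)}^{-1} e_{R,(k-1)}^{1/2} \lesssim \Xi_{(0)}^{-1} Z^{-5k} e_{R,(0)}^{1/2}$. Summing, $W := \sum_k W_{(k)}$ satisfies $\|W\|_{C^0} \lesssim \Xi_{(0)}^{-1} e_{R,(0)}^{1/2} \lesssim \delta$, and integration by parts yields $|\int(\th - \th_{(0)})\phi\,dt\,dx| \lesssim \delta\, \|\nab\phi\|_{L^1(I_\ep\times\T^2)}$. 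A standard mollifier estimate gives $|\int(\th_{(0)} - f)\phi\,dt\,dx| \lesssim \delta\|f\|_{C^1}\|\nab\phi\|_{L^1(I_\ep\times\T^2)}$, so (\ref{ineq:weakApprox}) follows by taking $\delta$ sufficiently small in terms of $\ep$ and $f$. The main obstacle in the argument is the parameter bookkeeping: simultaneously verifying every hypothesis of the Main Lemma at every stage --- (\ref{eq:conditionsOnN2}), (\ref{eq:lowBoundEoftx}), the bounds (\ref{ineq:goodEnergy}) on $e_{(k)}$, and the support nesting into $I_\ep$ --- while preserving the self-similar geometric scaling that yields the optimal H\"older exponent. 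This bookkeeping is entirely analogous to the corresponding argument for Euler carried out in \cite{isett}.
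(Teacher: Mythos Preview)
Your approach is correct and essentially identical to the paper's proof in Sections~\ref{sec:baseCase}--\ref{sec:verify:claim}: iterate the Main Lemma with geometrically decaying energy levels and a large initial frequency $\Xi_{(0)}$ to control both the support growth and the weak-$\ast$ error via the $W$ bound \eqref{eq:Wco}. The only difference is that the paper takes $\th_{(0)} = f$ directly (no mollification is needed since $f$ is already smooth) and achieves large $\Xi_{(0)}$ through an explicit parameter $Y$ in $\Xi_{(0)} = Y\overline{\Xi}$ rather than your $\delta^{-1}$, which avoids the extra $\th_{(0)} - f$ term in your estimate for \eqref{ineq:weakApprox}; the paper also tunes $N_{(0)}, N_{(1)}$ separately from the steady-state value to handle the initial transient cleanly, a bookkeeping point you already flag.
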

Theorem \ref{thm:mainThm} follows from Theorem \ref{thm:mainThm2} by a straightforward argument that is implicit in Section \ref{sec:proofOfCor} below.  

Our starting point is the observation that the function $f$ can be viewed as a solution to the scalar-stress equation
\ali{
\pr_t f + \pr_l( f u^l) &= \pr_l R^l \label{eq:scalStressForf} \\
u^l &= T^l[f] \notag \\
R^j &= \pr^j \De^{-1} [ \pr_t f + \pr_l( f u^l) ] \label{eq:0thstress}
}
thanks to the condition 
\[\int_{\T^2} \pr_t f dx = \fr{d}{dt} \int_{\T^2} f(t,x) dx = 0.\]  
Furthermore, the functions $(f, u^l, R^l)$ in \eqref{eq:scalStressForf} are all smooth functions on $\R \times \T^2$ with support contained in a finite time interval $I \times \T^2$.  In particular, the scalar function $\th_{(0)} = f$ can be viewed as part of a smooth, compactly supported solution $(f, u^l, c_A, R^l)$ to the compound scalar stress equation \eqref{eq:compScalStress} with $c_A = 0$. 

Our proof of Theorem \ref{thm:mainThm2} will be completed once we prove the following Claim.
\begin{Claim}\label{Properties}Under the assumptions of Theorem \ref{thm:mainThm2}, there exists a sequence sequence of scalar-stress fields $(\th_{(k)}, u_{(k)}^l , c_{A, (k)}, R_{J,(k)}^l )$  satisfying the following properties.
\begin{enumerate}
\item \label{item:alternating} For even indices $k = 0, 2, 4, \ldots$, $(\th_{(k)}, u_{(k)}^l , c_{A, (k)}, R_{J,(k)}^l )$ solves the Compound Scalar-Stress Equation \eqref{eq:compScalStress} with vector $A^l$, whereas for odd indices $k = 1, 3, 5, \ldots$, $(\th_{(k)}, u_{(k)}^l , c_{A, (k)}, R_{J,(k)}^l )$ solves the Compound Scalar-Stress Equation \eqref{eq:compScalStress} with vector $B^l$.  Here $A^l$ and $B^l$ are defined as in \eqref{eq:AlBlare}.
\item \label{item:vanishingStress} We have $\co{c_{A,(k)}} + \co{ R_{J,(k)} } \to 0$ as $k \to \infty$
\item \label{item:coBddness} The sequences $\th_{(k)}, u_{(k)}^l$ are Cauchy in $C_{t,x}^0$ with uniform bounds on $\co{\th_{(k)}}, \co{u_{(k)}}$ depending only on $f$.
\item \label{item:CalphCauchy} The sequences $\th_{(k)}, u_{(k)}^l$ are also Cauchy in $C_{t,x}^\a$.
\item \label{item:suppBounds} We have $\supp \th_{(k)} \subseteq I_\ep \times \T^2$ for all $k$. 
\item \label{item:bounds} The estimate \eqref{ineq:weakApprox} holds for $\th_{(k)}$ uniformly in $k$.
\end{enumerate}
\end{Claim}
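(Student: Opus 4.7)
The plan is to construct the sequence by iterated application of Lemma~\ref{lem:mainLemma}, starting from the initial datum $(\theta_{(0)}, u_{(0)}^l, c_{A,(0)}, R_{J,(0)}) = (f, T^l[f], 0, R^l)$, where $R^l$ is defined in~\eqref{eq:0thstress}. Since $f$ is smooth with compact support and has vanishing integral, this quadruple solves the compound scalar--stress equation~\eqref{eq:compScalStress} with vector $A^l$, and its frequency--energy levels are bounded by some $(\Xi_{(0)}, e_{v,(0)}, e_{R,(0)}, e_{J,(0)})$ depending only on $f$ (with $e_{R,(0)} = e_{J,(0)}$ since $c_{A,(0)} = 0$). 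I fix a single large parameter $Z$ (depending on $K_0$, $K_1$ and $T^l$), set $N_{(k)} = \lceil Z^{9/2} \rceil$, and reserve $\Xi_{(0)}$ to be chosen very large at the end. The updating rules dictated by Lemma~\ref{lem:mainLemma},
\[ e_{v,(k+1)} = e_{R,(k)}, \quad e_{R,(k+1)} = K_1 e_{J,(k)}, \quad e_{J,(k+1)} = \Big(\fr{e_{v,(k)}^{1/2}}{e_{R,(k)}^{1/2} N_{(k)}}\Big)^{1/2} e_{R,(k)}, \quad \Xi_{(k+1)} = C N_{(k)} \Xi_{(k)}, \]
are designed so that, with $N_{(k)} \sim Z^{9/2}$, the ratios $e_{v,(k)}/e_{R,(k)}$ and $e_{R,(k)}/e_{J,(k)}$ stabilize at a constant of order $Z$; this stabilization is the reason for the exponent $9/2$. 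Thus the hypotheses $e_{J,(k)} \leq e_{R,(k)} \leq e_{v,(k)}$ and \eqref{eq:conditionsOnN2} hold for all $k$ once $Z$ is large, and Lemma~\ref{lem:mainLemma} can be reapplied at every stage, alternating between vectors $A^l$ and $B^l$ (by symmetry of their roles) to produce item~(\ref{item:alternating}).

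For the convergence statements, the key numerology is $e_{R,(k)}^{1/2} \sim (Z/K_1)^{-k/2}$ and $\Xi_{(k)} \sim Z^{9k/2} \Xi_{(0)}$. Item~(\ref{item:vanishingStress}) then follows because $\co{c_{A,(k)}} \leq e_{R,(k)}$ and $\co{R_{J,(k)}} \leq e_{J,(k)}$ both decay geometrically. For item~(\ref{item:coBddness}), the bound $\co{\Theta_{(k)}} \leq C e_{R,(k)}^{1/2}$ from~\eqref{eq:Vco} is summable, and combined with the initial bound $\co{f}$ gives the uniform estimate~\eqref{eq:c0bdTh}. For item~(\ref{item:CalphCauchy}), interpolation between~\eqref{eq:Vco} and~\eqref{eq:nabVco} yields
\[ \|\Theta_{(k)}\|_{C^\alpha} \leq C \co{\Theta_{(k)}}^{1-\alpha}\co{\nabla\Theta_{(k)}}^\alpha \leq C e_{R,(k)}^{1/2}(N_{(k)}\Xi_{(k)})^\alpha \leq C \big[Z^{(9\alpha-1)/2} K_1^{1/2}\big]^k, \]
which is summable precisely when $\alpha < 1/9$ (for $Z$ large enough to dominate $K_1$). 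The same estimates applied to $U_{(k)}^l$ via \eqref{eq:Vco}--\eqref{eq:nabVco} supply the analogous statements for the drift velocity.

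The main obstacle --- the step that dictates the final choice of $\Xi_{(0)}$ --- will be controlling the time support and verifying the weak approximation~\eqref{ineq:weakApprox}. I choose, at each stage, a smooth nonnegative energy profile $e_{(k)}(t) \geq K_0 e_{R,(k)}$ on a $\hat\tau_{(k)}$-neighborhood of $\supp c_{A,(k)} \cup \supp R_{J,(k)}$, with $\hat\tau_{(k)} = \Xi_{(k)}^{-1} e_{v,(k)}^{-1/2}$, whose support barely exceeds that neighborhood. Lemma~\ref{lem:mainLemma} then ensures $\supp \Theta_{(k)} \cup \supp c_{A,(k+1)} \cup \supp R_{J,(k+1)} \subseteq \supp e_{(k)}$; since $\hat\tau_{(k)} \sim Z^{-9k/2}\Xi_{(0)}^{-1} e_{v,(0)}^{-1/2}$ decays geometrically, the total enlargement $\sum_k \hat\tau_{(k)}$ is finite and can be made at most $\ep$ by taking $\Xi_{(0)}$ sufficiently large, which gives item~(\ref{item:suppBounds}) by induction. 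For item~(\ref{item:bounds}), using the representation $\Theta_{(j)} = \pr_l W_{(j)}^l$ and~\eqref{eq:Wco}, integration by parts yields
\[ \Big|\int (\theta_{(k)} - f)\phi\,dt\,dx\Big| \leq \sum_{j=0}^{k-1} \co{W_{(j)}}\,\|\nabla\phi\|_{L^1(I_\ep \times \T^2)}, \]
and $\sum_j \co{W_{(j)}} \leq C \sum_j \Xi_{(j)}^{-1} N_{(j)}^{-1} e_{R,(j)}^{1/2}$ is a geometric series dominated by its first term, hence $\leq C\ep$ once $\Xi_{(0)}$ is chosen large enough. The delicate point is that the single free parameter $\Xi_{(0)}$ must simultaneously absorb all the largeness requirements --- the initial frequency--energy bounds on $f$, $C^\alpha$ summability, the support telescoping, and the $O(\ep)$ weak-approximation bound --- each of which is controlled by taking $\Xi_{(0)}$ large compared to quantities depending only on $f$, $\ep$, $\alpha$, and $T^l$.
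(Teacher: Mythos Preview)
Your approach is essentially the same as the paper's: iterate Lemma~\ref{lem:mainLemma} with $N_{(k)} \sim Z^{9/2}$ to obtain geometric decay of the energy levels and growth of the frequencies, and use one large initial frequency parameter (the paper writes $\Xi_{(0)} = Y\overline{\Xi}$ and tunes $Y$, which is only cosmetically different from tuning $\Xi_{(0)}$ directly) to control both the support telescoping and the weak-approximation bound via $\sum_k \co{W_{(k)}}$.

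Two technical points need care. First, initializing with $e_{R,(0)} = e_{J,(0)}$ gives after one step $e_{R,(1)} = K_1 e_{J,(0)} = K_1 e_{v,(1)}$, which violates the ordering $e_R \leq e_v$ in Definition~\ref{def:freqEnDef} when $K_1 > 1$; the paper avoids this by setting $e_{v,(0)} = e_{R,(0)} = K_1 e_{J,(0)}$ and taking slightly different values $N_{(0)} = K_1^2 Z^2$, $N_{(1)} = K_1^2 Z^4$ before the ratios stabilize, so that both the ordering and condition~\eqref{eq:conditionsOnN2} hold at every stage including the transient. Second, your interpolation between~\eqref{eq:Vco} and~\eqref{eq:nabVco} yields only $C^\alpha_x$, whereas the claim requires $C^\alpha_{t,x}$; the paper handles the time regularity by writing $\pr_t \Th_{(k)} = -u_{(k)}\cdot\nab \Th_{(k)} + (\pr_t + u_{(k)}\cdot\nab)\Th_{(k)}$ and bounding the advective piece via~\eqref{eq:matVco} together with the already-established uniform bound~\eqref{eq:unifBdthkuk} on $\co{u_{(k)}}$.
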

The scalar stress fields described in Claim \ref{Properties} will be constructed by iteration of Lemma \ref{lem:mainLemma}.

\subsection{The Base Case}\label{sec:baseCase}

To initialize the construction, we set $\th_{(0)} = f$, $u_{(0)}^l = T^l[f]$, $c_{A,(0)} = 0$, and $R_{J,(0)}$ as in \eqref{eq:0thstress}.  We define $I_{(0)}$ to be the smallest closed inerval such that $\supp f \subseteq I_{(0)} \times \T^2$.  We set 
\[ e_{J,(0)} = \co{ R_{J,(0)}}. \] 
In order to be consistent with the iteration rules \eqref{eq:iterateevk}-\eqref{eq:iterateXik} below and to maintain the inequality $e_v \geq e_R \geq e_J$ during the iteration, we take
\[ e_{v,(0)} = e_{R,(0)} = K_1 e_{J,(0)} \]
where $K_1$ is the constant in Lemma~\ref{lem:mainLemma}.
Now let $\overline{\Xi}$ be a sufficiently large constant such that the bounds of Definition \ref{def:freqEnDef} hold with $L = 2$ for the frequency energy levels $(\Xi, e_v, e_R, e_J) = (\overline{\Xi}, e_{J,(0)}, e_{J,(0)}, e_{J,(0)})$. 

We will choose our initial frequency level $\Xi_{(0)}$ to be even larger than the parameter $\overline{\Xi}$.  More specifically, $\Xi_{(0)}$ will take the form
\ali{
\Xi_{(0)} &= Y \overline{\Xi} \label{eq:initXi0}
}
Here $Y \geq 1$ is a large parameter whose purpose will ultimately be to make sure that the time interval containing the support of the solution will be small without disturbing the required $C^0$ estimate.  In terms of the construction, choosing the parameter $Y$ to be large will imply that we perform the iteration with a large frequency parameter $\lambda$ and a small lifespan parameter $\tau$ when we iterate the Main Lemma.


\subsection{Choice of Parameters for \texorpdfstring{$k \geq 1$.}{k >= 1}}\label{sec:chooseParameters}

We will proceed with the proof by iteration of the Main Lemma, which requires us to specify a sequence of frequency energy levels $(\Xi_{(k)}, e_{v,(k)}, e_{R,(k)}, e_{J,(k)})$, a sequence of functions $e_{(k)}(t)$ prescribing the energy increment, a sequence of intervals $I_{(k)}$ containing the support of the compound scalar-stress fields, and a sequence of frequency growth factors $N_{(k)} \geq 2$.  The present section is devoted to choosing these parameters, and studying how these parameters grow or decay during the iteration.

We will choose our frequency energy levels so that the following iteration rules hold for all $k \geq 0$:
\ali{
e_{v,(k+1)} &= \; e_{R,(k)} \label{eq:iterateevk}\\
e_{R,(k+1)} &= K_1 e_{J,(k)} \\
e_{J,(k+1)} &= \fr{ e_{J,(k)}}{Z} \label{eq:iterateeJk} \\
\Xi_{(k+1)} &= C_0 N_{(k)} \Xi_{(k)} \label{eq:iterateXik}
}
The parameter $Z$ will be chosen in the proof to be a large constant satisfying $Z\geq K_1 \geq 1$.  From \eqref{eq:iterateevk}-\eqref{eq:iterateeJk} and the choices of Section~\ref{sec:baseCase}, 
the energy levels decay exponentially according to the following pattern:
\ali{
\begin{split}
(e_v, e_R, e_J)_{(0)} &= ( K_1 e_{J,(0)}, K_1 e_{J,(0)}, e_{J,(0)} ) \\
(e_v, e_R, e_J)_{(1)} &= \left( K_1 e_{J,(0)}, K_1 e_{J,(0)}, \fr{1}{Z} e_{J,(0)} \right) \\
(e_v, e_R, e_J)_{(k)} &= \left( \fr{K_1}{Z^{k-2}} e_{J,(0)}, \fr{K_1}{Z^{k-1}} e_{J,(0)}, \fr{1}{Z^k} e_{J,(0)} \right), \qquad k \geq 2
\end{split} \label{eq:expDecayEnergy}
}
The constant $C_0$ in \eqref{eq:iterateXik} is the constant $C$ appearing in line \eqref{eq:theNewEnergyLevel} of the Main Lemma.  Thus, $C_0$ will depend on how we construct our energy increment functions $e_{(k)}(t)$, which will be specified momentarily.  According to the bound of line \eqref{eq:theNewEnergyLevel}, we have that 
\ali{
e_{J,(k+1)} &= \left( \fr{e_{v,(k)}^{1/2}}{e_{R,(k)}^{1/2} N_{(k)}} \right)^{1/2} e_{R,(k)} \label{eq:NewEjk}
}
The iteration rules \eqref{eq:iterateevk}-\eqref{eq:iterateeJk} are therefore achieved by taking
\ali{
 N_{(k)} &= \left( \fr{e_{v,(k)}}{e_{R,(k)}} \right)^{1/2} \left( \fr{e_{R,(k)}}{e_{J,(k)}} \right)^2 Z^2   \label{eq:Nkchoice}
}
More specifically, recalling \eqref{eq:expDecayEnergy}, 
\ali{
\begin{split}
 N_{(0)} = K_1^2 Z^2, \quad  N_{(1)} = K_1^2 Z^4, \quad  N_{(k)} = K_1^2 Z^{9/2}, \qquad k \geq 2.
\end{split}\label{eq:choiceOfNk} 
}
As we always have $\left( \fr{e_{v,(k)}}{e_{R,(k)}} \right)^{3/2} \leq Z^{3/2} \leq N$, the assumption of line \eqref{eq:conditionsOnN2} is always satisfied, so this choice of $N_{(k)}$ is admissible.  
With this choice, iteration of \eqref{eq:iterateXik} results in exponential growth of the frequency levels 
\ali{
Z^{2k} \Xi_{(0)} \leq \Xi_{(k)} &\leq C_0^{k} K_1^{2k} Z^{(9/2) k} \Xi_{(0)} \label{eq:XiGrowsGeom}
}
for all $k \geq 0$.  

We will now specify how our sequence of energy functions $e_{(k)}(t)$ and time intervals $I_{(k)}$ will be chosen, beginning with stage $k = 0$.  Define 
\ali{
\hat{\tau}_{(k)} = \Xi_{(k)}^{-1} e_{v,(k)}^{-1/2}
}
to be the natural time scale associated to these frequency energy levels.  Let $I_{(0)}$ be the time interval containing the support of the initial scalar-stress field.  Let $\eta_\ep(t)$ be a standard, non-negative mollifying kernel in one variable, with support in $|t| \leq \ep$.  The initial energy function $e_{(0)}(t)$ is required to satisfy the lower bound $e_{(0)}(t) \geq K_0 e_{R,(0)}$ on the time interval $I_{(0)} \pm \hat{\tau}_{(0)}$ according to \eqref{eq:lowBoundEoftx}, and must have a square root $e_{(0)}^{1/2}(t)$ which satisfies bounds of the form \eqref{ineq:goodEnergy}.  We construct $e_{(0)}(t)$ by mollifying the characteristic function of $I_{(0)}$ according to the formula
\ali{
e_{(0)}^{1/2}(t) &= (2 K_0)^{1/2} e_{J,(0)}^{1/2} ~ \eta_{\hat{\tau}} \ast \chi_{I_{(0)} \pm 3 \hat{\tau}}(t) \label{eq:defineEo1half}
}
With this choice, the lower bound \eqref{eq:lowBoundEoftx} and the bounds \eqref{ineq:goodEnergy} are satisfied with $K = K_0$ and with $M$ being some absolute constant which arises from differentiating the mollifier.  Having constructed $e_{(0)}(t)$, we can apply Lemma \ref{lem:mainLemma} to obtain a solution $(\th_{(1)}, u_{(1)}^l, c_{A,(1)}, R_{J,(1)}^l)$ to the Compound Scalar-Stress equation with vector $B^l$ with support in the interval $I_{(1)} \times \T^2$, $I_{(1)} = I_{(0)} \pm 4 \hat{\tau}_{(0)}$.  

We now iterate this procedure to form a sequence of scalar stress fields $(\th_{(k)}, u_{(k)}^l, c_{A,(k)}, R_{J,(k)}^l)$ whose compound frequency energy levels obey the rules -\eqref{eq:iterateevk}-\eqref{eq:iterateXik} by choosing $N_{(k)} = Z^{9/2}$ according to \eqref{eq:choiceOfNk}.  We define
\[ e_{(k)}^{1/2}(t) = (2 K_0)^{1/2} e_{J,(k)}^{1/2} ~ \eta_{\hat{\tau}} \ast \chi_{I_{(k)} \pm 3 \hat{\tau}}(t) \]
so that the bounds on $e_{(k)}^{1/2}$ are consistent with the bounds on \eqref{eq:defineEo1half}, and Lemma \ref{lem:mainLemma} applies with the same constant $M$.  According to Lemma \ref{lem:mainLemma}, the time intervals containing the support of the scalar stress fields support grow according to the rule
\ali{
 I_{(k+1)} &= I_{(k)} \pm (4 \hat{\tau}_k) \label{eq:timeIntervalGrowth}
}
In \eqref{eq:timeIntervalGrowth} and below, we use the notation $I \pm \de$ to denote the $\de$-neighborhood of an interval $I$.  In other words, $I \pm \de = [a-\de, b+\de]$ if $I = [a,b]$.  During this iteration, the vector in the scalar-stress equation alternates between $A^l$ and $B^l$ as in Property \ref{item:alternating} of Claim \ref{Properties}.

We have now defined our iteration up to the choice of the parameters $Y$ and $Z$.  We will choose these parameters in the following Subsection to ensure that the properties listed in Claim \ref{Properties} are all satisfied.

\subsection{Verifying Claim \ref{Properties}  }\label{sec:verify:claim}
We now verify that the properties in Claim \ref{Properties} hold for sufficiently large values of $Y$ and $Z$.

\noindent {\bf Property \ref{item:alternating}} 
This property follows immediately from the construction.

\noindent {\bf Property \ref{item:vanishingStress} }
To check that the error terms converge uniformly to $0$, we observe that
\ALI{
\co{R_{J,(k)}} &\leq e_{J,(k)} = Z^{-k} e_{J,(0)}
}
from \eqref{eq:expDecayEnergy}, and the same type of estimate holds for $\co{ c_{A,(k+1)} }$.  Thus, both terms composing the stress error converge uniformly to $0$.

\noindent {\bf Property \ref{item:coBddness} }
Here we verify that the sequence $\th_{(n)}, u_{(n)}^l$ is Cauchy in $C^0$.  Recall that, for $n \geq 1$ we have
\ali{
\th_{(n)} = \th_{(0)} + \sum_{k = 0}^{n-1} \Th_{(k)}, \quad u_{(n)}^l = u_{(0)}^l + \sum_{k=0}^{n-1} U_{(k)} \label{eq:thnUnissum}
}
where the properties of $\Th_{(k)}$ and $U_{(k)}^l$ are as described in Lemma \ref{lem:mainLemma}.  The functions $\th_{(0)}$ and $u_{(0)}^l$ are smooth with compact support, and are therefore uniformly bounded.  Our estimates for the corrections have the form
\ali{
 \co{\Th_{(k)}} + \co{U_{(k)}} &\leq C e_{R,(k)}^{1/2} \label{eq:c0bdForThkUk}
}
From \eqref{eq:expDecayEnergy}, the bounds $e_{R,(k)}$ decays exponentially in $k$ for any choice of $Z \geq 2$, and both series in \eqref{eq:thnUnissum} therefore converge uniformly.  In particular, as $\th_{(0)} = f$, we have
\ali{
 \co{\th_{(k)}} + \co{u_{(k)}} &\leq \co{f} + C e_{R,(0)}^{1/2}, \qquad k \geq 0  \label{eq:unifBdthkuk}
}
where $C$ is proportional to the constant in Lemma~\ref{lem:mainLemma}.  In particular, the bound~\eqref{eq:unifBdthkuk} depends only on $f$, and does not depend on our choices of parameters $Y$ and $Z$.

\noindent {\bf Property \ref{item:CalphCauchy}} 
We now verify that the series \eqref{eq:thnUnissum} also converges in $C_{t,x}^\a$ once $Z$ is chosen large enough.  
We claim that the bounds from Lemma \ref{lem:mainLemma} give
\ali{
\co{ \nab \Th_{(k)}} + \co{ \nab U_{(k)} }  + \co{\pr_t \Th_{(k)}} + \co{\pr_t U_{(k)}} &\leq C N_{(k)} \Xi_{(k)} e_{R,(k)}^{1/2} \label{eq:nabThkUkbd}
}
The bounds on $\co{ \nab \Th_{(k)}} + \co{ \nab U_{(k)} } $ follow directly from Lemma \ref{lem:mainLemma}.  We obtain the same bound for the time derivatives by writing
\ali{
\pr_t \Th_{(k)} &= - u_{(k)} \cdot \nab \Th_{(k)} + (\pr_t + u_{(k)} \cdot \nab)\Th_{(k)} \label{eq:timeIsAdvecPlus}
}
and similarly for $U_{(k)}$.  As we have shown in \eqref{eq:unifBdthkuk} that the sequence $\co{u_{(k)}}$ is uniformly bounded by some constant, we have that the terms $- u_{(k)} \cdot \nab \Th_{(k)}$ and $- u_{(k)} \cdot \nab U_{(k)}$ both obey the estimate \eqref{eq:nabThkUkbd}. Lemma \ref{lem:mainLemma} also supplies the following bound on the advective derivative:
\ALI{
\co{ (\pr_t + u_{(k)} \cdot \nab) \Th_{(k)} } + &\co{(\pr_t + u_{(k)} \cdot \nab) U_{(k)}} \leq C {\bf b}_{(k)}^{-1/2} \Xi_{(k)}^{1/2} e_{v,(k)}^{1/2} e_{R,(k)}^{1/2} \\
{\bf b_{(k)}} &=  N_{(k)}^{-1} (e_{v,(k)}^{1/2}/e_{R,(k)}^{1/2})
}
Note that the parameter $N_{(k)} = K_1^2 Z^{9/2}$ and the ratio $\fr{e_{v,(k)}^{1/2}}{e_{R,(k)}^{1/2}} = Z^{1/2}$ are both independent of $k$ once $k \geq 2$, while $e_{v,(k)} = K_1^2 Z^{-(k-2)} e_{J,(0)}$ decays to $0$ exponentially.  Thus, the estimate for the advective derivative is even better than the bound \eqref{eq:nabThkUkbd}.  From \eqref{eq:timeIsAdvecPlus} we now conclude that \eqref{eq:nabThkUkbd} holds for the time derivative as well.  

Interpolation of \eqref{eq:nabThkUkbd} and \eqref{eq:c0bdForThkUk} gives
\ali{
\| \Th_{(k)} \|_{C_{t,x}^\a} + \| U_{(k)} \|_{C_{t,x}^\a} &\leq C [N_{(k)} \Xi_{(k)}]^\a e_{R,(k)}^{1/2} 
}
Applying \eqref{eq:XiGrowsGeom} and \eqref{eq:choiceOfNk}, we have
\ali{
\| \Th_{(k)} \|_{C_{t,x}^\a} + \| U_{(k)} \|_{C_{t,x}^\a} &\leq C_{Z, K_1, C_0} \Big(C_0^\a  K_1^{2 \a} Z^{\left(\fr{9}{2} \a - \fr{1}{2}\right)} \Big)^k ~ \Big( \Xi_{(0)}^\a e_{R,(0)}^{1/2} \Big) \label{eq:correctCtxaBd}
} 
As we have assumed $\a < 1/9$, we can take $Z$ large enough depending on $\a$, $K_1$ and $C_0$ so that 
\ali{
 Z^{\left(\fr{9}{2} \a - \fr{1}{2}\right)} &< K_1^{2\a} C_0^\a  \label{eq:pickABigZ}
}
Under this assumption, the right hand side of \eqref{eq:correctCtxaBd} tends to $0$ exponentially fast as $k \to \infty$, and it follows that the series \eqref{eq:thnUnissum} converges in $C_{t,x}^\a$.

\noindent {\bf Property \ref{item:suppBounds}}
To bound the support of $\th_{(k)}$, recall that $\supp \th_{(k)} \subseteq I_{(k)}$, where $I_{(0)}$ is the smallest time interval containing the support of $f$, and the intervals $I_{(k)}$ grow according to the rule \eqref{eq:timeIntervalGrowth}.  As a consequence,
we have (in terms of the notation introduced in \eqref{eq:timeIntervalGrowth})
\ALI{
 I_{(k)} \subseteq I_{(0)} \pm T , \qquad k = 0, 1, 2, \ldots  \\
 T = 4 \sum_{k = 0}^\infty \hat{\tau}_{(k)} = 4 \sum_{k = 0}^\infty \Xi_{(k)}^{-1} e_{v,(k)}^{-1/2}
}
We recall now that $e_{v,(0)} = e_{v,(1)} = e_{v,(2)} = K_1 e_{J,(0)}$ while $e_{v,(k)}$ decays exponentially as \eqref{eq:expDecayEnergy} for $k \geq 2$.  We also recall the lower bound in \eqref{eq:XiGrowsGeom} to obtain
\ali{
T &\leq 4 \Xi_{(0)}^{-1} e_{J,(0)}^{-1/2} \left( 2 + \fr{1}{1 - (C_0 Z^{5/2} )^{-1}} \right)
}
Recalling the definition \eqref{eq:initXi0} of $\Xi_{(0)}$, and noting that $C_0 \geq 2$ and $Z \geq 1$, we have
\ali{
T &\leq 8 Y^{-1} \overline{\Xi}^{-1} e_{J,(0)}^{-1/2}
}
Property \ref{item:suppBounds} is satisfied for the $\ep > 0$ given in \eqref{item:suppBounds} once $Y$ is chosen sufficiently large to ensure $T < \ep$.

\noindent {\bf Property \ref{item:bounds}}
For a smooth test function $\phi$ with compact support, we have
\ali{
\int_{\R \times \T^2} ( \th - f ) \phi(t,x) dt dx &= \int_{\R \times \T^2} ( \th - \th_{(0)} ) \phi(t,x) dt dx \\
&= \sum_{k = 0}^\infty \int \Th_{(k)} \phi(t,x) dt dx
}
According to Lemma \ref{lem:mainLemma}, we can write the corrections in divergence form $\Th_{(k)} = \pr_l W_{(k)}^l $ for some vector fields $W_{(k)}^l$ obeying the estimates \eqref{eq:Wco}-\eqref{eq:matWco}.  Integrating by parts, we have
\ali{
 \sum_{k = 0}^\infty \int \Th_{(k)} \phi(t,x) dt dx &= - \sum_{k = 0}^\infty \int W_{(k)}^l \pr_l \phi(t,x) dt dx 
}
Recalling \eqref{eq:expDecayEnergy}, \eqref{eq:XiGrowsGeom} and the definition \eqref{eq:initXi0} of $\Xi_{(0)}$, we obtain 
\ali{
\Big|\int_{\R \times \T^2} ( \th - f ) \phi(t,x) dt dx \Big| &\leq \sum_{k = 0}^\infty \co{ W_{(k)} } \| \nab \phi \|_{L^1_{t,x}(I_\ep \times \T^2)} \\
&\leq C \left(\sum_{k = 0}^\infty \fr{1}{N_{(k)}\Xi_{(k)}} e_{R,(k)}^{1/2} \right) \| \nab \phi \|_{L^1_{t,x}(I_\ep \times \T^2)} \\
&= \left(\sum_{k = 0}^\infty \fr{C_0}{\Xi_{(k+1)}} e_{R,(k)}^{1/2} \right)\| \nab \phi \|_{L^1_{t,x}(I_\ep \times \T^2)} \\
&\leq C \fr{C_0}{\Xi_{(1)}} e_{R,(0)}^{1/2} \| \nab \phi \|_{L^1_{t,x}(I_\ep \times \T^2)} \\
&\leq C \fr{1}{Z^2} \Xi_{(0)}^{-1} e_{J,(0)}^{1/2} \| \nab \phi \|_{L^1_{t,x}(I_\ep \times \T^2)} \\
&\leq C \fr{1}{Z^2 Y} \overline{\Xi}^{-1} e_{J,(0)}^{1/2} \| \nab \phi \|_{L^1_{t,x}(I_\ep \times \T^2)}
}
where $C_0$ above denotes the constant in the Main Lemma.  Taking $Y$ (or alternatively $Z$) to be large enough depending on $C$, $\overline{\Xi}$ and $e_{J,(0)}$, we obtain \eqref{ineq:weakApprox}.  This choice concludes the proof of Theorem \ref{thm:mainThm2}.

\section{Proof of Corollary \ref{cor:weakLims} } \label{sec:proofOfCor}

In this Section, we explain how Theorem \ref{thm:mainThm} (or alternatively Theorem \ref{thm:mainThm2}) 
can be applied to prove Corollary \ref{cor:weakLims}, which characterizes the closure of compactly supported solutions to the active scalar equations in the space $L^\infty$ endowed with the weak-* topology.  

\begin{proof}[Proof of Corollary \ref{cor:weakLims}]
As in the statement of Theorem \ref{thm:mainThm}, consider an Active Scalar Equation \eqref{eq:activeScalar} with a smooth multiplier that is not odd.  Let $I \subseteq \R$ be a nonempty, finite, open interval.  Let $\a < 1/9$ and let $S \subseteq L^\infty$ denote the set of all weak solutions $(\th, u^l)$ to the Active Scalar equation \eqref{eq:activeScalar} which have compact support in $I \times \T^2$, and which belong to the H\"{o}lder class $(\th, u^l) \in C_{t,x}^\a$.  Let $\overline{S}$ denote the closure of $S$ in $L^\infty$ with respect to the weak-* topology.  Corollary \ref{cor:weakLims} asserts that $\overline{S}$ is equal to the space of $f \in L^\infty(I \times \T^2)$ which satisfy the conservation law $\int f(t,x) dx = 0$ as a distribution in the variable $t$.  In other words, we assume that for every smooth test function $\eta(t) : I \to \R$ with compact support, we have
\ali{
 \int_{I \times \T^2} \eta(t) f(t,x) dt dx &= 0  \label{eq:consLawIntegralForm}
}
First, observe that any $f \in L^\infty$ which belongs to $\overline{S}$ must satisfy \eqref{eq:consLawIntegralForm} for all such $\eta(t)$, since the integration against $\eta(t)$ is continuous with respect to the weak-* topology, and because equality \eqref{eq:consLawIntegralForm} is satisfied by all of the elements $(\th, u^l) \in S$.  This conservation law is proven for each $(\th, u^l) \in S$ by writing the test function in \eqref{eq:consLawIntegralForm} as $\eta = \tilde{\eta} + ( \eta - \tilde{\eta})$, where $\tilde{\eta}$ is a smooth function whose support is disjoint support from that of $(\th, u^l)$ that satisfies
\[\int_I \tilde{\eta}(t) dt = \int_I \eta(t) dt \]  
This condition allows us to write $\eta - \tilde{\eta} = \fr{d}{dt} h(t)$, where $h(t)$ is smooth and compactly supported in $I$.  The definition of weak solution for \eqref{eq:activeScalar} then implies
\ALI{
\int_{I \times \T^2} \eta(t) \th(t,x) dt dx &= \int_{I \times \T^2} \tilde{\eta}(t) \theta(t,x) dt dx + \int_{I \times \T^2} (\eta - \tilde{\eta})(t) \theta(t,x) dt dx \\
&= \int \fr{\pr}{\pr t}h(t) \th(t,x) dt dx \\
&= \int u^l \fr{\pr}{\pr x^l} h(t) ~ \th(t,x) dt dx = 0 
}
We now show conversely that every $f \in L^\infty$ satisfying \eqref{eq:consLawIntegralForm} belongs to $\overline{S}$.  Let us assume by contradiction that $f \notin \overline{S}$.  By definition of the weak-* topology, there exists a finite collection $\{ \eta_1, \ldots, \eta_m \} \subseteq L^1(I \times \T^2)$ and $\de > 0$ such that for all $\th \in S$ the lower bound
\ali{
 \Big| \int (f(t,x) - \theta(t,x)) \eta_j(t,x) dt dx \Big| &\geq \de  \label{eq:badEta}
}
holds for at least one $\eta_j \in \{ \eta_1, \ldots, \eta_m \}$.

Let $\tilde{f} \in L^\infty(I \times \T^2)$ be a smooth function of compact support with  $\| \tilde{f} \|_{L^\infty} \leq \| f\|_{L^\infty}$ such that property \eqref{eq:consLawIntegralForm} holds for $\tilde{f}$ and for all such $\eta_j$, and we have the bound
\ali{
 \Big| \int (f(t,x) - \tilde{f}(t,x)) \eta_j(t,x) dt dx \Big| &\leq \de/4  \label{eq:closeFtildeForEta}
}
   Such a function $\tilde{f}$ can be constructed by first applying a smooth cutoff in time to restrict to a compact subset of $I \times \T^2$, and then convolving with a mollifier in time and space, noting that both operations preserve the property \eqref{eq:consLawIntegralForm} without enlarging the $L^\infty$ norm.   Inequality \eqref{eq:closeFtildeForEta} is established by duality, as the adjoint cutoff and mollifier operations converge strongly in $L^1$ when applied to each $\eta_j$.  We choose the mollification in such a way that the support of $\tilde{f}$ remains inside a time interval $\tilde{I}$ strictly smaller than $I$ with $\tilde{I} \pm \tau \subseteq I$ for some $\tau > 0$.


Now apply\footnote{At this point, Theorem \ref{thm:mainThm} would also suffice.} Theorem \ref{thm:mainThm2} for the function $\tilde{f}$ with $\ep = 1/n$ to obtain a sequence $(\th_n, u_n^l) \in S$ such that the bound $\| \th_n \|_{L^\infty} \leq A$ holds uniformly, and \eqref{ineq:weakApprox} holds for $\tilde{f}$ with $\ep = 1/n$.  We assume here that $n \geq \tau^{-1}$ is large enough to ensure $(\th_n, u_n^l)$ have compact support in $I \times \T^2$ thanks to the compact support of $\tilde{f}$ and  \eqref{eq:suppOfTh}.  Now let $\tilde{\eta}_j$ be smooth functions of compact support in $I \times \T^2$ with $\| \eta_j- \tilde{\eta}_j \|_{L^1(I \times \T^2)} \leq \fr{\de}{5(\|f\|_{L^\infty} + A)}$.  We obtain an upper bound on the left hand side of \eqref{eq:badEta}
\ALI{
\Big| \int (f(t,x) - \theta_n(t,x)) \eta_j(t,x) dt dx \Big| &\leq \fr{\de}{4} + \Big| \int ( \tilde{f} - \th_n ) \eta_j dt dx \Big| \\
&\leq \fr{\de}{4} + \fr{\de}{5} + \Big| \int ( \tilde{f} - \th_n ) \tilde{\eta}_j dt dx \Big| \\
&\leq \fr{\de}{4} + \fr{\de}{5} + \fr{1}{n} \| \nab \tilde{\eta}_j \|_{L^1_{t,x}}
}
Taking $n$ large enough contradicts inequality \eqref{eq:badEta} and concludes the proof.
\end{proof}

\section{Proof of Theorem~\ref{thm:arbitraryNonuniqueness}}\label{sec:arbitNonuniqueProof}

In this section, we outline how Lemma~\ref{lem:mainLemma} can also be applied to yield Theorem~\ref{thm:arbitraryNonuniqueness}.  The proof follows an idea of \cite[Section 12]{isett}.  The same argument below also shows that one can glue any two solutions which have the same integral\footnote{This observation is due to Sung-Jin Oh \cite{sungOhRemark}.}.

Let $\th$ be a smooth solution of \eqref{eq:activeScalar} on $(-T, T) \times \T^2$, with multiplier $m^l$ which is not odd.  Let $\bar{\th} = \fr{1}{|\T^2|} \int_{\T^2} \th(0,x) dx$ be the average value of $\th$, which is conserved by $\th$ along the flow.  Let $\psi(t)$ be a smooth cutoff function, equal to $1$ on $|t| \leq \fr{5T}{8}$ and equal to $0$ for $|t| \geq \fr{6T}{8} = \fr{3T}{4}$.  

Now consider the scalar field $\th_{(0)}(t,x) = \psi(t) \th(t,x) + (1-\psi(t) ) \bar{\th}$.  Then $\th_{(0)}$ is an integral-conserving scalar field (i.e. $\int_{\T^2} \pr_t \th_{(0)} dx = \fr{d}{dt} \int_{\T^2} \th_{(0)} dx = 0$), and therefore solves the scalar stress equation
\ali{
\pr_t \th_{(0)} + \pr_l( \th_{(0)} T^l[\th_{(0)}] ) &= \pr_l R^l \\
R^j &= \pr^j \De^{-1} [ \pr_t \th_{(0)} + \pr_l( \th_{(0)} T^l[\th_{(0)}] ) ]
}
Note also that, because both $\th$ and $\bar{\th}$ are solutions to \eqref{eq:activeScalar}, the support of $R^l$ is contained in the support of $\psi'(t)$, namely
\[ \supp R^l(t,x) \subseteq \{ \fr{5T}{8} \leq |t| \leq \fr{6T}{8} \} \times \T^2 \]
Repeating the argument of Sections~\ref{sec:baseCase}-\ref{sec:verify:claim}, we can now iterate Lemma~\ref{lem:mainLemma} to obtain a sequence of solutions $\th_{(k)}$ to the compound scalar stress equation, such that
\[ \supp (\th_{(k)} - \th_{(0)}) \subseteq \{ \fr{T}{2} \leq |t| \leq \fr{4T}{5} \} \times \T^2 \]
for all indices $k \geq 0$, and such that $\th_{(k)} \to \tilde{\th}$ converge in $C_{t,x}^\a$ to a solution of \eqref{eq:activeScalar}.  At this point, the main difference in the argument is that we choose energy functions $e_{(k)}(t)$ which are supported within pairs of intervals containing a small neighborhood of $\{ \fr{5T}{8} \leq |t| \leq \fr{6T}{8} \}$.  (In fact, the argument is simpler at this point because we do not need to achieve a weak approximation, and hence there is no need to introduce the parameter $Y$.)  As we can take this intervals of support to form an arbitrarily small neighborhood of $\{ \fr{5T}{8} \leq |t| \leq \fr{6T}{8} \}$, we can keep the support of the iteration contained within $\{ \fr{T}{2} \leq |t| \leq \fr{4T}{5} \}$, and thereby obtain Theorem~\ref{thm:arbitraryNonuniqueness}.


\section{Proof of Weak Rigidity for Odd Active Scalars} \label{sec:rididityOdd}
In this section we give the proof of Theorem~\ref{thm:theOddCase}. 
Let $\theta \in \{\theta_n \}_{n \geq 0}$ be a weak solution to \eqref{eq:activeScalar}, with associated velocity field $u^l = T^l[\theta]$. Also let $\phi \in \DD(I \times \T^2)$ be a fixed test function. The proof of the theorem is based on the following computation. For each fixed time $t$, let 
\begin{align} 
N_t[\th,\phi] = \int_{\T^2} \theta(t,x) \; u^l(t,x) \;\pr_l \phi(t,x) \;dx \label{eq:nonlinTermNotCpctYet}
\end{align}
denote the nonlinear term integrated over the time $t$ slice. Since $T^l$ is given by a Fourier multiplier, it commutes with differentiation, and upon integrating by parts several times we obtain
\begin{align} 
N_t[\th,\phi] &= \int_{\T^2} \pr^k \Delta^{-1} \pr_k \theta  \; \pr^j T^l[  \Delta^{-1} \pr_j\theta] \; \pr_l \phi \; dx \notag\\
&= - \int_{\T^2}  \Delta^{-1} \pr_k \theta  \; \pr^k \pr^j T^l[ \Delta^{-1} \pr_j  \theta] \; \pr_l \phi \;  dx - \int_{\T^2}  \Delta^{-1} \pr_k \theta  \;  \pr^j T^l[  \Delta^{-1}\pr_j \theta] \; \pr^k\pr_l \phi \;  dx\notag\\
&= \int_{\T^2} \pr_k \Delta^{-1}  \pr^j\theta  \; \pr^k  T^l[ \Delta^{-1} \pr_j  \theta] \; \pr_l \phi \; dx \notag\\
& \qquad + \int_{\T^2} \Delta^{-1} \pr_k \theta  \; \pr^k  T^l[ \Delta^{-1} \pr_j  \theta] \; \pr^j  \pr_l \phi \; dx - \int_{\T^2}  \Delta^{-1} \pr_k \theta  \;  \pr^j T^l[  \Delta^{-1}\pr_j \theta] \; \pr^k\pr_l \phi \; dx\notag\\
&= - \int_{\T^2}  \Delta^{-1}  \pr^j\theta  \; \pr_j  T^l[ \theta] \; \pr_l \phi \; dx - \int_{\T^2} \Delta^{-1}  \pr^j\theta  \; \pr^k  T^l[ \Delta^{-1} \pr_j  \theta] \; \pr_k \pr_l \phi \; dx\notag\\
& \qquad + \int_{\T^2} \Delta^{-1} \pr_k \theta  \; \pr^k  T^l[ \Delta^{-1} \pr_j  \theta] \; \pr^j  \pr_l \phi \; dx - \int_{\T^2}  \Delta^{-1} \pr_k \theta  \;  \pr^j T^l[  \Delta^{-1}\pr_j \theta] \; \pr^k\pr_l \phi \; dx.
\label{eq:N:rewrite:1}
\end{align}
At this stage we use that the Fourier multiplier $m^l(\xi)$ is odd in $\xi$, which implies that $\pr_j T^l$, given by the Fourier multiplier $i \xi_j m^l(\xi)$ which is even in $\xi$, is self-adjoint in $L^2(\T^2)$. We may thus write
\begin{align} 
& \int_{\T^2}  \Delta^{-1}  \pr^j\theta  \; \pr_j  T^l[ \theta] \; \pr_l \phi \; dx \notag\\
&\qquad = \int_{\T^2} \theta\;  \pr_j  T^l[ \Delta^{-1}  \pr^j\theta  \;  \pr_l \phi] \;  dx \notag\\
&\qquad = \int_{\T^2} \theta\;  \pr_j  T^l[ \Delta^{-1}  \pr^j\theta  ] \;  \pr_l \phi \; dx + \int_{\T^2} \theta \left(  \pr_j  T^l[ \Delta^{-1}  \pr^j\theta  \;  \pr_l \phi]  -  \pr_j  T^l[ \Delta^{-1}  \pr^j\theta  ] \;  \pr_l \phi  \right)  dx \notag\\
&\qquad = N_t[\th,\phi] + \int_{\T^2} \theta \; \left[\pr_j T^l, \pr_l \phi\right] \Delta^{-1} \pr^j \theta \; dx. 
\label{eq:N:rewrite:2}
\end{align}
Combining \eqref{eq:N:rewrite:1} and \eqref{eq:N:rewrite:2} we arrive at 
\begin{align} 
2 N_t[\th,\phi]
&= - \int_{\T^2} \theta \; \left[\pr_j T^l, \pr_l \phi\right] \Delta^{-1} \pr^j \theta \; dx - \int_{\T^2} \Delta^{-1}  \pr^j\theta  \; \pr^k  T^l[ \Delta^{-1} \pr_j  \theta] \; \pr_k \pr_l \phi \; dx\notag\\
& \qquad + \int_{\T^2} \Delta^{-1} \pr_k \theta  \; \pr^k  T^l[ \Delta^{-1} \pr_j  \theta] \; \pr^j  \pr_l \phi \; dx - \int_{\T^2}  \Delta^{-1} \pr_k \theta  \;  \pr^j T^l[  \Delta^{-1}\pr_j \theta] \; \pr^k\pr_l \phi \; dx .
\label{eq:N:rewrite:3}
\end{align}
From the H\"older inequality, and the bounds
\begin{align} 
\| \nabla T \Delta^{-1} \nabla \eta \|_{L^2}  &\leq C \|\eta\|_{L^2}, & \mbox{ for } \eta \in L^2(\T^2) \\
\| [ \nabla T^l, \pr_l \phi ] \eta \|_{L^2} &\leq  C \|\eta\|_{L^2} \|\phi\|_{H^{3+\epsilon}}, & \mbox{ for } \eta \in \dot{H}^1(\T^2) \label{ineq:commutator}
\end{align} 
we thus obtain from \eqref{eq:N:rewrite:3} that
\begin{align} 
|N_t[\th,\phi]| \leq C \| \theta(t, \cdot)\|_{L^2} \| \Delta^{-1} \nabla \theta(t,\cdot)\|_{L^2} \|\phi(t,\cdot) \|_{H^{3+\epsilon}}
\label{eq:N:bound}
\end{align}
for any $\epsilon>0$. The above estimate is a manifestation of the compactness inherent in $N_t$ in the spatial variables.

Since we have only assumed $\theta \in L^p(I; L^2(\T^2))$, compactness in the time variable must come from the active scalar equation.  Below we give two essentially equivalent approaches to obtaining this compactness.  The first proof is based on a variant of the Arzel\`{a}-Ascoli principle due to Aubin-Lions.  The second proof is a more direct argument in the spirit of \cite{isett2}, using Littlewood-Paley theory to extract regularity in time.

\paragraph{Time compactness via Aubin-Lions compactness lemma.}
At this stage we notice that for any weak solution $\theta \in L^p(I; L^2(\T^2))$ of \eqref{eq:activeScalar}, and any index $j$, we have we have that 
\begin{align} 
\partial_t (\Delta^{-1} \pr_j \theta) = \Delta^{-1} \pr_j \pr_l (\theta\; T^l[\theta])
\end{align}
holds in the sense of distributions, and thus 
\begin{align} 
\| \pr_t (\Delta^{-1} \pr_j \theta)\|_{L^{p/2}(I;H^{-2}(\T^2))} 
&= \| \Delta^{-1} \pr_j \pr_l (\theta\; T^l[\theta]) \|_{ L^{p/2}(I;H^{-2}(\T^2))} \notag \\
&\leq C \| \Delta^{-1} \pr_j \pr_l (\theta\; T^l[\theta]) \|_{L^{p/2}(I;L^1(\T^2))} \notag\\
&\leq C \| \theta\|_{L^p(I;L^2(\T^2))}^2
\label{eq:weak:Lipschitz}
\end{align}
in view of the compact embedding of $L^2(\T^2) \subset W^{2,1}(\T^2)$.

Now assume that 
\ali{
\theta^n \rightharpoonup f \in L^p(I; L^2(\T^2))
\label{eq:weak:conv:assume}
}
for some $p>2$.
The convergence of the mean 
\[ \int_{\T^2} \theta^n \; dx \to \int_{\T^2} f \; dx \qquad \mbox{ in } \DD'(I) \]
is automatic.
In view of the Sobolev embedding and \eqref{eq:weak:Lipschitz}, by \eqref{eq:weak:conv:assume} we have that 
\begin{align} 
&\Delta^{-1} \nabla \theta^n \mbox{ is uniformly bounded in } L^p(I; H^1(\T^2))\\
&\partial_t (\Delta^{-1} \nabla \theta^n) \mbox{ is uniformly bounded in } L^{p/2}(I;H^{-2}(\T^2)).
\end{align}
Therefore, applying the Aubin-Lions compactness lemma (see e.g.~\cite[Lemma 8.4]{CF88}), we obtain that there is a subsequence $\{\theta^{n_j}\}$ such that 
\begin{align} 
\Delta^{-1} \nabla \theta^{n_j} \to \Delta^{-1} \nabla f \in L^p(I; L^2(\T^2)),
\label{eq:strong:convergence}
\end{align}
i.e. the convergence is strong.  
To conclude, we integrate \eqref{eq:N:rewrite:3} in time, use \eqref{eq:N:bound} and \eqref{eq:strong:convergence}, and obtain that 
\[
\int_I \int_{\T^2} \theta^{n_j} \; T^l[\theta^{n_j}] \; \pr_l \phi dx dt \to \int_I \int_{\T^2} f\; T^l[f]\; \pr_l \phi dx dt
\]
for any test function $\phi$, since the product of a strong and a weak limit is a weak limit. The convergence holds in fact along any subsequence $n_j \to \infty$, and therefore holds also along the original sequence.

\paragraph{Time compactness via Littlewood-Paley theory.} 
We now give a more direct proof which illustrates the usefulness of Littlewood-Paley theory in extracting time regularity. 

Let us use the notation $P_{\leq I} \th$, $P_I \th$ and $P_{[a,b]} \th$ denote the standard, Littlewood-Paley projection operators.  Thus,
\[ \widehat{P_{\leq I} \th}(\xi) = \eta( 2^{-I} \xi) \hat{\th}(\xi), \quad  I = 0, 1, 2, \ldots \]
is a truncation of $\hat{\th}$ to frequencies of order $\supp \widehat{P_{\leq I} \th} \subseteq \{ |\xi| \leq 2^{I+1} \}$, $\eta$ is a smooth cutoff supported in $|\xi| \leq 2$ with $\eta(\xi) = 1$ for $|\xi| \leq 5/4$.  We let $P_I = P_{\leq I} - P_{\leq I-1} $ denote the Littlewood-Paley piece which occupies frequencies $\supp \widehat{P_I \th} \subseteq \{ 2^{I-1} \leq |\xi| \leq 2^{I+1} \}$.  We use the notation $P_{[a,b]} = \sum_{a \leq I \leq b} P_I$.

Now let $\phi \in C_0^\infty(I \times \T^2)$ be a smooth test function, and let $\th^n$ be a sequence of solutions to \eqref{eq:activeScalar} converging weakly to $\th_n \rightharpoonup f$ in $L^p(I; L^2(\T^2))$ for some $p > 2$ as in \eqref{eq:weak:conv:assume}.  Let $N = \int_\R N_t[\th, \phi] dt = \int_\R \int_{\T^2} \th u^l \pr_l \phi ~dx dt$ denote the full nonlinear term.

 We claim that $N[\th^n, \phi] \to N[f, \phi]$.  To simplify the calculation, a simple approximation argument allows us to assume that that $\hat{\phi}$ has compact support in $\supp \hat{\phi} \subseteq \{ |\xi| \leq 2^{r-1} \}$ for some $r \geq 0$.  
In this case, for all $\th \in \{ \th^n \}_{n \geq 0}$, we decompose the nonlinear term \eqref{eq:nonlinTermNotCpctYet} into dyadic frequency increments
\ali{
N[\th, \phi] &= N[P_{\leq r} \th, \phi] + \sum_{I = r+1}^\infty \de N_I[ \th, \phi] \label{eq:dyadicDecompN} \\
\de N_I[ \th, \phi] &= N[P_{\leq I+1} \th, \phi] - N[P_{\leq I} \th, \phi] \\
&= \int_\R \int_{\T^2} P_{I+1} \th P_{\leq I+1} u^l \pr_l \phi dx dt + \int_\R \int_{\T^2} P_{\leq I} \th P_{I+1} u^l \pr_l \phi dx dt \\
&= \int P_{I+1} \th P_{[ I - r, I + r]} u^l \pr_l \phi dx + \int P_{[I - r, I + r]} \th P_{I+1} u^l \pr_l \phi dx.
}
In the last line we take advantage of the compact support of $\hat{\phi}$ for convenience.  Using the commutator formulation \eqref{eq:N:rewrite:3}, each $\de N_I$ decomposes into several terms of the type
\[ \de N_I[\th,\phi] = \int_\R \int_{\T^2} \De^{-1} \pr_k P_{I+1} \th ~\pr^k T^l[\De^{-1} \pr_j P_{[I-r, I+r]} \th ] \pr^j \pr_l \phi ~dx dt + \tx{ other similar terms}  \]
From \eqref{ineq:commutator} and $\| \De^{-1} \nab P_I \th \|_{L_x^2} \leq C 2^{-I} \| \th \|_{L_x^2}$, each $\de N_I$ is bounded by
\ali{
| \de N_I[\th, \phi] | &\leq C_\phi 2^{-I} \| \th \|_{L_{t,x}^2}^2 \label{eq:spaceCpctnessEstimate}
}
for some constant $C_\phi$ depending on $\phi$.

We now show that the weak convergence \eqref{eq:weak:conv:assume} can in fact be upgraded to uniform convergence for each dyadic piece $P_I \th^n \to P_I f$, which implies the convergence of each term $\de N_I[\th^n , \phi] \to \de N_I[f, \phi]$.  The uniform convergence is obtained by compactness.  We start with the bounds
\[ \| P_I \th^n \|_{L_t^p L_x^\infty} + \| \nab P_I \th^n \|_{L_t^p L_x^\infty} \leq C_I \| \th^n \|_{L_t^p L_x^2} \]
Applying $P_I$ to \eqref{eq:activeScalar}, the equation $\pr_t P_I \th = - \pr_l P_I[ \th u^l]$ gives regularity in time
\[ \| \pr_t P_I \th^n \|_{L_t^{p/2} L_x^\infty} \leq C_I \| \th^n \|_{L_t^p L_x^2}^2 \]
As we have assumed $p > 2$ and uniform in $n$ bounds on $\| \th^n \|_{L_t^p L_x^2}^2$ from \eqref{eq:weak:conv:assume}, it follows by Sobolev embedding that the sequence $P_I \th^n$ for each $I$ is H\"{o}lder continuous in time and space, uniformly in $n$.  By Arzel\`{a}-Ascoli, there exists a uniformly convergent subsequence $P_I \th^{n_j}$ for each $I$.  From the weak convergence \eqref{eq:weak:conv:assume}, we have uniform convergence of $P_I \th^n \to P_I f$ on any subsequence, which implies that the original sequence $P_I \th^n \to P_I f$ converges uniformly.  

It now follows that $\de N_I[\th^n , \phi] \to \de N_I[f, \phi]$ for each index $I$ and that $N[P_{\leq r} \th^n, \phi] \to N[P_{\leq r} f, \phi]$.  We also have the estimate \eqref{eq:spaceCpctnessEstimate}, so the convergence of $N[\th^n, \phi] \to N[f, \phi]$ follows from the dominated convergence theorem applied to \eqref{eq:dyadicDecompN}.  

We remark that the same two arguments can be upgraded to prove compactness of solutions when we only assume weak convergence in $L_t^{p}H_x^s$ for some $p > 2$ and $s > -1/2$.  The main difference involves using the commutator formulation \eqref{eq:N:rewrite:3} to obtain an estimate for the time derivative from the lower regularity in space. 


\section{Conservation of the Hamiltonian for Odd Active Scalars} \label{sec:hamiltonianOdd}
In this Section we give the proof of Theorem~\ref{thm:odd:hamiltonian}. 
Recall that the symbol of the Fourier multiplier $L$ defined in \eqref{eq:L:def}, which defines the Hamiltonian is given by
\begin{align}
\hat{L}(\xi) = |\xi|^{-2} (i \xi_2 m_1(\xi) - i \xi_1 m_2(\xi) )
\end{align}
with the convention that $\hat{L}(0)=0$.
Since we are in two spatial dimensions and $\xi \cdot m(\xi) = 0$ for all nonzero vectors $\xi$, automatically we must have that
\begin{align}
m(\xi) = i \xi^\perp |\xi|^{-1} \ell(\xi)
\label{eq:ell:def}
\end{align}
for some {\em even}, zero-order homogenous, smooth on the unit sphere, {\em real-valued} {\em scalar} function $\ell(\xi)$. The fact that $\ell(\xi) \in \R$ follows from the fact that $\overline{\ell(\xi)} = \ell(-\xi) = \ell(\xi)$.
In the case of the SQG equation, $\ell(\xi) =1$.

In summary, we have that 
\begin{align}
\hat{L}(\xi) = |\xi|^{-1} \ell(\xi)
\label{eq:L:multiplier}
\end{align}
which reiterates that $L$ is a self-adjoint operator, which is smoothing of degree $-1$ when $\ell(\xi)$ is nonvanishing on the unit sphere. 
The Hamiltonian then is 
\begin{align}
H(t) = \int_{\T^2} \theta(t,x) L\theta(t,x) dx
\end{align}
or equivalently, in view of Plancherel's theorem,
\begin{align}
H(t) = \sum_{k \in \Z^2_\ast} |\hat{\theta}(t,k)|^2 |k|^{-1} \ell(k).
\end{align}

Let $\phi_\ep = \ep^{-2} \phi(x/\ep)$ be a standard mollifier on $\T^2$, and denote
\[
\cdot_\ep = \cdot \ast \phi_\ep, \qquad \cdot_{\ep,\ep} = \cdot \ast \phi_\ep \ast \phi_\ep.
\]

The conservation of the Hamiltonian $H$ for solutions of \eqref{eq:activeScalar} is implied by establishing that $\fr{d}{dt} H(t) = 0$ as a distribution in $t$.  Namely, we show that
\begin{align}
\lim_{\ep\to0}\int_{\R} \int_{\T^2} \eta'(t) \theta_\ep (t,x) L \theta_\ep (t,x) dx dt = 0
\label{eq:Hamiltonian:todo:1}
\end{align}
holds for every smooth function $\eta(t)$ which is supported in $I$.  Note that since mollification $\ast \phi_\ep$ is given by a Fourier multiplier, it commutes with $L$.

Considering the test function $\eta(t) L \theta_{\ep,\ep}$ in the weak formulation of \eqref{eq:activeScalar}, we arrive at
\begin{align}
\int_{\R} \int_{\T^2} \theta \pr_t (\eta L\theta_{\ep,\ep}) dx dt + \int_{\R}\int_{\T^2} \theta u^l \pr_l (\eta L\theta_{\ep,\ep}) dx dt = 0,
\label{eq:Hamiltonian:weak}
\end{align}
for every $\ep>0$.  Strictly speaking, the above test function is not smooth in time, but this restriction can be ignored after a time mollification argument, as in the proof of~\cite[Theorem 2.2]{IsettOhHeat13}.  The first term in \eqref{eq:Hamiltonian:weak} may now be rewritten as
\begin{align}
\int_{\R} \int_{\T^2} \theta \pr_t (\eta L\theta_{\ep,\ep}) dx dt 
&= \int_{\R} \int_{\T^2} \theta_{\ep} \pr_t (\eta L\theta_{\ep}) dx dt \notag\\
&= \int_{\R} \int_{\T^2} \theta_{\ep} \eta' L\theta_{\ep} dx dt + \int_{\R} \int_{\T^2} \theta_{\ep} \eta L\pr_t \theta_{\ep} dx dt \notag\\
&= \int_{\R} \int_{\T^2} \theta_{\ep} \eta' L\theta_{\ep} dx dt + \int_{\R} \int_{\T^2} L \theta_{\ep} \eta \pr_t\theta_{\ep} dx dt \notag\\
&= \int_{\R} \int_{\T^2} \theta_{\ep} \eta' L\theta_{\ep} dx dt -  \int_{\R} \int_{\T^2}\pr_t ( L \theta_{\ep,\ep} \eta) \theta dx dt.
\end{align}
Combining the above with \eqref{eq:Hamiltonian:weak} we see that establishing \eqref{eq:Hamiltonian:todo:1} is equivalent to establishing 
\begin{align}
\lim_{\ep \to 0}  \int_{\R} \eta \int_{\T^2} (\theta u^l)_{\ep} \pr_l L\theta_{\ep} dx dt = 0.
\label{eq:Hamiltonian:todo:2}
\end{align}
for $u^l = T^l[\th]$.

Up to this point, we have presented the proof of conservation of $H(t)$ analogously to the proof of energy conservation for Euler in the Onsager critical Besov space $L_t^3 B_{1/3, c(\N)}^3$ of \cite{ches}, but the remaining analysis turns out to be less subtle.  In particular, there is no need for a quadratic commutator estimate as in \cite{CET} (and the mollification above could also be simpler).

To proceed, we view the cubic term on the left hand side of \eqref{eq:Hamiltonian:todo:2} as the diagonal part of a family of trilinear operators
\ali{
Q_\ep[\th_{(1)}, \th_{(2)}, \th_{(3)}] &=\int_{\R} \eta \int_{\T^2} (\theta_{(1)} T^l[\th_{(2)}])_{\ep} (\pr_l L\theta_{(3)})_{\ep} dx dt
}
In this notation, equation \eqref{eq:Hamiltonian:todo:2} asks to show $\lim_{\ep \to 0} Q_\ep[\th, \th,\th] = 0$ for all $\th \in L_{t,x}^3$.  Observe first that the operators $Q_\ep$ satisfy the bound
\ali{
 |Q_\ep[ \th_{(1)}, \th_{(2)}, \th_{(3)} ]| &\leq \| \th_{(1)} \|_{L_{t,x}^3} \cdot \| T[\th_{(2)}] \|_{L_{t,x}^3} \cdot \| \nab L [\th_{(3)}] \|_{L_{t,x}^3} \notag \\
&\leq C \| \th_{(1)} \|_{L_{t,x}^3} \cdot \| \th_{(2)} \|_{L_{t,x}^3} \cdot \| \th_{(3)} \|_{L_{t,x}^3} \label{eq:trilinBound}
}
This bound follows from the fact that both operators $T$ and
\begin{align}
\nabla L = \nabla (-\Delta)^{-1/2} (R_2 T^1 - R_1 T^2)
\end{align}
are bounded as operators from $L^3_{x}$ to itself, thanks to the smoothness and degree $0$ homogeneity of $m$.

Because the operators $Q_\ep$ are trilinear, and the bound \eqref{eq:trilinBound} they satisfy is uniform in $\ep$, it suffices to prove \eqref{eq:Hamiltonian:todo:2} under the additional assumption that $\th$ is smooth with compact support by the density of such functions in $L_{t,x}^3$.  Assuming now that $\th$ is smooth, we may pass $\ep$ to $0$ in \eqref{eq:Hamiltonian:todo:2}, and it remains to show that
\[ \int_{\R} \eta \int_{\T^2} \theta T^l[\th] \pr_l L\theta dx dt = 0. \]

At this stage we recall that
\begin{align}
u^l = T^l [\theta] = (\pr^l)^\perp L\theta,
\end{align}
which may be seen on the Fourier side from \eqref{eq:ell:def} and \eqref{eq:L:multiplier}. As a result, we have 
\begin{align}
\int_{\T^2} \theta u^l \; \pr_l L\theta dx = \int_{\T^2} \theta (\pr^l)^\perp L \theta \; \pr_l L\theta dx = 0
\end{align}
which concludes the proof.

\section{Constraints on Weak Limits of Degenerate Active Scalars in Higher Dimensions} \label{sec:constraints}
In this Section, we give a proof of Theorem~\ref{thm:more:constraints}, which shows that the nondegeneracy condition in Theorem~\ref{thm:nDims} is necessary for the weak limit statement of Theorem~\ref{thm:mainThm}.

In this section, we assume that there is a nonzero frequency $\xi_{(0)} \in \widehat{\T^n} \setminus \{ 0\} = \Z_*^n$ in the dual lattice such that the image of the even part of the multiplier is contained in
\ali{
 \{ m(\xi) + m(-\xi) ~|~ \xi \in \widehat{\R^n} \} \subseteq \langle \xi_{(0)} \rangle^{\perp} \label{eq:imageInHyperplane}
}
In this case, we have the following restriction on weak limits of solutions to the active scalar equation, which bears resemblance to a new conservation law. 
\begin{lem}\label{lem:conditionIfIntegerPerp} Consider the active scalar equation \eqref{eq:activeScalar} on $I \times \T^n$ and suppose that the image of the even part of the multiplier is contained in the hyperplane \eqref{eq:imageInHyperplane}.  Let $T_{0}^l$ denote the Fourier multiplier with symbol
\[ \widehat{T_0^l[\th]}(\xi) = \fr{1}{2}( m^l(\xi) - m^l(-\xi) ) \hat{\th}(\xi)\]
Suppose that $\phi \in C_0^\infty(I \times \T^n)$ has the property that its spatial gradient takes values in the direction $\xi_{(0)}$
\ali{
 \nab \phi(t,x) &\in \langle \xi_{(0)} \rangle \label{eq:gradValues}
}
Suppose that $f \in L^\infty(I \times \T^n)$ can be realized as a weak-* limit $\th_{(k)} \rightharpoonup f$ in $L^\infty$ of some sequence of solutions $\th_{(k)}$ to \eqref{eq:activeScalar}.  Then
\ali{
 \int_{I \times \T^n} f \pr_t \phi + f T_0^l[f] \pr_l \phi dx dt &= 0 \label{eq:orthogCondition}
}
\end{lem}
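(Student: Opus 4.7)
My plan is to reduce the lemma to the rigidity machinery of Theorem~\ref{thm:theOddCase} applied to the odd part of the multiplier alone. First, decompose $m(\xi) = m_e(\xi) + m_o(\xi)$ with $m_e(\xi) = \fr{1}{2}(m(\xi) + m(-\xi))$ and $m_o(\xi) = \fr{1}{2}(m(\xi) - m(-\xi))$, and correspondingly $T = T_e + T_0$. The identity $m(-\xi) = \overline{m(\xi)}$ ensures that both $T_e$ and $T_0$ send real-valued inputs to real-valued outputs, and the orthogonality condition $\xi \cdot m(\pm\xi) = 0$ gives $\xi \cdot m_e(\xi) = \xi \cdot m_o(\xi) = 0$, so both operators produce divergence-free vector fields; both symbols remain smooth and zero-order homogeneous away from the origin.

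By the degeneracy hypothesis \eqref{eq:imageInHyperplane}, $m_e(\xi) \in \langle \xi_{(0)} \rangle^\perp$ for every $\xi \neq 0$, so a Fourier side computation gives $T_e[\th](t,x) \in \langle \xi_{(0)} \rangle^\perp$ pointwise for every real-valued $\th$. Combined with the gradient hypothesis \eqref{eq:gradValues}, this yields $T_e[\th_{(k)}] \cdot \nab \phi \equiv 0$ on $I \times \T^n$ for each $k$. Testing the weak formulation of \eqref{eq:activeScalar} against $\phi$ and using this pointwise vanishing collapses the equation to
\ali{
 \int_{I \times \T^n} \th_{(k)} \pr_t \phi + \th_{(k)} T_0^l[\th_{(k)}] \pr_l \phi \, dx dt = 0 \label{eq:reduced:weak:proposal}
}
for every $k$; establishing \eqref{eq:orthogCondition} thus reduces to passing to the limit in \eqref{eq:reduced:weak:proposal}.

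The linear term $\int \th_{(k)} \pr_t \phi \to \int f \pr_t \phi$ converges by weak-$\ast$ convergence in $L^\infty$ since $\pr_t \phi \in L^1$. For the nonlinear term, I apply essentially verbatim the compactness argument from the proof of Theorem~\ref{thm:theOddCase}, but with $T$ replaced by $T_0$. The commutator rewriting \eqref{eq:N:rewrite:3} relies only on $\pr_j T_0^l$ being self-adjoint, which holds because $i\xi_j m_0^l(\xi)$ is an even symbol; the estimates \eqref{ineq:commutator} and \eqref{eq:N:bound} transfer directly since $m_0$ inherits all relevant smoothness and zero-order homogeneity from $m$. The uniform bound on $\| \pr_t (\Delta^{-1} \pr_j \th_{(k)}) \|_{L^{p/2}(I; H^{-2})}$ still comes from the full equation exactly as in \eqref{eq:weak:Lipschitz}, and uniform $L^\infty$ boundedness of the sequence $\th_{(k)}$ supplies an $L^p(I; L^2(\T^n))$ bound for any $p > 2$. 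Aubin--Lions then delivers strong convergence $\Delta^{-1} \nab \th_{(k)} \to \Delta^{-1} \nab f$ in $L^p(I; L^2)$, and combining with the $T_0$-version of \eqref{eq:N:rewrite:3} gives $\int \th_{(k)} T_0^l[\th_{(k)}] \pr_l \phi \to \int f T_0^l[f] \pr_l \phi$. The main obstacle is really just bookkeeping: verifying that each structural property of $T$ invoked in Theorem~\ref{thm:theOddCase} is inherited by $T_0$, which as noted above it is.
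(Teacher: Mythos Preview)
Your proof is correct and follows essentially the same approach as the paper: decompose $T = T_e + T_0$, use the degeneracy hypothesis together with \eqref{eq:gradValues} to eliminate the $T_e$ contribution in the weak formulation, and then pass to the limit in the remaining $T_0$-nonlinearity by invoking the compactness argument of Theorem~\ref{thm:theOddCase} with the odd operator $T_0$ in the nonlinear term while drawing the time-regularity estimate from the full equation. The paper's proof is more terse but makes the same key observation you spell out explicitly, namely that the compactness argument requires only oddness of the multiplier in the nonlinear term and the time-derivative bound from the actual equation.
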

\begin{proof}  Consider the sequence of solutions $\th_{(k)}$ to \eqref{eq:activeScalar} converging to $f$ in the $L^\infty$ weak-* topology.  Decompose the operator $T^l$ as $T^l = T_0^l + T_e^l$, where the term $T_e^l$ of the operator is the Fourier multiplier with symbol
\ali{
 \widehat{T_e^l[\th]}(\xi) &= \fr{1}{2}( m^l(\xi) + m^l(-\xi) ) \hat{\th}(\xi)
}
By equation \eqref{eq:activeScalar}, we have for all indices $k$ that
\ali{
\int_{I \times \T^n} ( \th_{(k)} \pr_t \phi + \th_{(k)} T_0^l[\th_{(k)}] \pr_l \phi ) dx dt &= - \int_{I \times \T^n} \th_{(k)} T_e^l[\th_{(k)}] \pr_l \phi dx dt = 0 \notag
}
by the condition \eqref{eq:imageInHyperplane}.  By the proof of Theorem~\ref{thm:theOddCase}, the nonlinear term is continuous with respect to weak-* limits in $L^\infty$ when restricted to active scalar fields, giving \eqref{eq:orthogCondition}.  To make this conclusion, it is important to note that, in the proof of compactness for the nonlinear term, it was not important that the operator in the nonlinear term was identical to the operator appearing in the active scalar equation.  The proof used only the oddness of the multiplier in the nonlinear term, and certain time regularity estimates from the active scalar equation coming from the boundedness properties of the operator in the equation.
\end{proof}
Assuming that the hyperplane containing the image of the even part of $m$ is in the dual lattice $\xi_{(0)} \in \widehat{\T^n}$, it is now not so hard to design a test function $\phi$ obeying \eqref{eq:gradValues} and an integral-conserving function $f$ which fails to satisfy \eqref{eq:orthogCondition}.  As a first attempt, we can let $\zeta(t)$ be a smooth cutoff in time, and take
\ali{
\phi(t,x) &= \zeta(t) \cos( \xi_{(0)} \cdot x) \\
f(t,x) &= \zeta'(t) \cos( \xi_{(0)} \cdot x) \label{eq:firstFTry}
}
Then \eqref{eq:gradValues} is satisfied, and we also have
\ali{
\int f(t,x) \pr_t \phi(t,x) dx dt &= \int (\zeta'(t))^2 \cos^2(\xi_{(0)} \cdot x) dx dt > 0 \label{eq:firstPositive}
}
is strictly positive.

The positivity of \eqref{eq:firstPositive} does not necessarily imply the failure of \eqref{eq:orthogCondition}.  However, if the equality \eqref{eq:orthogCondition} holds for this function $f$, then \eqref{eq:orthogCondition} cannot hold for the function $2f$, because the linear term (which is positive by \eqref{eq:firstPositive}) scales linearly, whereas the quadratic term scales quadratically.  Thus, at least one of $f$ or $2f$ fails to satisfy \eqref{eq:orthogCondition}, and we have Theorem~\ref{thm:more:constraints}.

\section{Concluding Discussion}\label{sec:conclusions}
Active scalar equations arise naturally in fluid dynamics in several asymptotic regimes, and as model equations for the full fluid systems.  The problem of constructing active scalar fields for which the energy $\|\theta\|_{L^2_x}$ fails to be conserved is a natural generalization of Onsager's conjecture for the Euler equations. This problem, however, encounters several additional difficulties when compared to Euler.  Most importantly, a suitable analogue of Beltrami flows, which provide an essential ingredient for obtaining regularity up to $1/5$ in the case of Euler, are unavailable.

For active scalars with multipliers that are not odd, we obtain nonuniqueness of weak solutions and even $h$-principles among integral-conserving functions for weak solutions with H\"{o}lder regularity up to $1/9$ (Theorem~\ref{thm:mainThm}, Theorem~\ref{thm:arbitraryNonuniqueness}, and Corollary~\ref{cor:weakLims}).  
Our proof is based on the observation that the interference terms which arise due to self-interactions between individual waves must vanish to leading order.  This observation allows for an approach in the spirit of the isometric embedding equations, where we eliminate one component of the error in each stage of the iteration using one-dimensional oscillations.  Our observation is general, and applies in arbitrary dimensions even to the case of the Euler equations, giving a new approach to solutions in that case as well.  However, our inability to remove more than one component of the error leads to further losses in regularity.


These results however should not be expected for multipliers which are odd.  For odd symbols, the Hamiltonian is conserved at the level of $\theta \in L^3_{t,x}$ (Theorem~\ref{thm:odd:hamiltonian}), and the nonlinearity exhibits a weak rigidity which makes it impossible to obtain an $h$-principle type result (Theorem~\ref{thm:theOddCase}).  In higher dimensions, the presence of conservation laws and other rigidity properties of weak solutions can even be sensitive to more subtle algebraic properties of the multiplier, and our method applies in a generality which is essentially optimal (Theorems~\ref{thm:nDims},~\ref{thm:more:constraints}).  

Several related questions remain open.  Part of our proof does not apply to the nonperiodic setting and some new idea could be required to produce nonperiodic solutions (currently even $L^\infty_{t,x}$ solutions have not been constructed in this case).  Other significant questions include
\begin{enumerate}
 \item In the case of SQG, exhibit a weak solution $\theta \in L^p_t L^2_x$, that does not conserve energy.
 \item In the case of SQG, exhibit a weak solution $\theta \in C^0_{x,t}$ that does not conserve energy, but does conserve the Hamiltonian.
 \item In the case of IPM, or more generally for not odd symbols, exhibit weak solutions $\theta \in C^{\alpha}_{t,x}$, with $\alpha \in (1/9,1/3)$ that do not conserve energy.
\end{enumerate}
We believe that answering these questions may shed some light into the field of two dimensional turbulence. 

Finally, further sharpening approaches which do not rely on the use of Beltrami flows may be found useful in resolving Onsager's conjecture.  The current approaches introduce anomalous time scales in the construction which are incompatible with the time regularity bounds held by more regular solutions.  With this obstruction, it seems unlikely at this time that an approach based on stationary solutions alone will go beyond the exponent $1/5$ even for $L^2$-based function spaces.  Although our construction shares in this deficiency, the cancellation of self-interference terms that lies at the heart of our proof is a general observation that arises from the structure of the equations and remains available even at longer time scales.  It is important to investigate whether further, more dynamical methods of construction can be developed.

\bibliographystyle{plain}
\bibliography{scalarDraft}

\end{document}